\def\@Rref#1{\hbox{\rm \ref{#1}}}
\def\Rref#1{\@Rref{#1}}
\theoremstyle{plain}
\newtheorem{theorem}{Theorem}[section]
\newtheorem{proposition}[theorem]{Proposition}
\newtheorem{lemma}[theorem]{Lemma}
\newtheorem{corollary}[theorem]{Corollary}
\theoremstyle{definition}
\newtheorem{definition}{Definition}[section]
\newtheorem{example}[definition]{Example}
\newtheorem{remark}[definition]{Remark}
\newcommand{\dist}{\mathop{\rm dist}}
\newcommand{\supp}{\mathop{\rm supp}}
\newcommand{\re}{\mathop{\rm Re}\nolimits}
\newcommand{\im}{\mathop{\rm Im}\nolimits}
\begin{document}

\title[Semigroup Growth and Resolvent Decay
for Differentiable Semigroups]{Relation between Semigroup Growth and Resolvent Decay
	for Immediately Differentiable Semigroups}

\thispagestyle{plain}

\author{Masashi Wakaiki}
\address{Graduate School of System Informatics, Kobe University, Nada, Kobe, Hyogo 657-8501, Japan}
 \email{wakaiki@ruby.kobe-u.ac.jp}

\begin{abstract}
We study the rate of growth of $\|AT(t)\|$ as $t \downarrow 0$
for an immediately differentiable $C_0$-semigroup $(T(t))_{t \geq 0}$
with generator $A$.
We assume that the resolvent of the semigroup generator 
decays on the imaginary axis
at rates described by 
functions of positive increase, which enable estimates on
scales finer than polynomial ones.
First, 
we present lower and upper bounds for the rates of growth of 
Banach space semigroups.
Next, we improve the upper estimate for Hilbert space semigroups.
Finally, for semigroups of normal operators on Hilbert spaces and 
multiplication $C_0$-semigroups on $L^p$-spaces, 
we establish an estimate that exactly captures the asymptotic behavior 
of $\|AT(t)\|$ as $t \downarrow 0$.
\end{abstract}

\subjclass[2020]{Primary 47D06; Secondary 34G10 $\cdot$ 35B65 $\cdot$ 26A12}
\keywords{$C_0$-semigroup, Immediate differentiability, 
	Rate of growth, 
	Resolvent.} 

\maketitle

	\section{Introduction}
Let $(T(t))_{t \geq 0}$ be a $C_0$-semigroup on a Banach space
$X$ with generator $A$.
We say that $(T(t))_{t \geq 0}$ is {\em immediately differentiable}
if the orbit map $t \mapsto T(t)x$ is differentiable on
the interval $(0,\infty)$ for each $x \in X$.
Holomorphic semigroups constitute
an important subclass of immediately differentiable 
semigroups.
It is well known  that 
an immediately differentiable $C_0$-semigroup
$(T(t))_{t \geq 0}$ is holomorphic if and only if
one of the following equivalent 
conditions holds (see, e.g., 
\cite[Theorem~2.5.2]{Pazy1983}, 
\cite[Theorem~II.4.6]{Engel2000}, and
\cite[Corollary~3.7.18 and Theorem~3.7.19]{Arendt2001}):
\begin{enumerate}
	\renewcommand{\labelenumi}{(\roman{enumi})}
	\item 
	$\displaystyle 
	\|AT(t)\| = O\left( \frac{1}{t} \right)$ as $t \downarrow 0$.
	\item $\displaystyle\|(i\eta- A)^{-1}\| = O\left( \frac{1}{|\eta|} \right)$ as $|\eta| \to \infty$.
\end{enumerate}
Here $O(\cdot)$ denotes
the big $O$ notation, and its definition can be found at the end of this section.
In this paper, we investigate the relation between
the growth of $\|AT(t)\|$ as $t \downarrow 0$ and 
the decay of $\|(i\eta- A)^{-1}\|$ as $|\eta| \to \infty$
for non-holomorphic $C_0$-semigroups.

Yosida \cite{Yosida1958} (see also \cite{Yosida1959}) 
considered an immediately
differentiable $C_0$-semigroup $(T(t))_{t \geq 0}$
satisfying
\[
\|AT(t)\| = O(e^{\gamma/t}) \qquad \text{as $t \downarrow 0$}
\]
for some constant $\gamma >0$.
For such $C_0$-semigroups, one has
\[
\|(i\eta- A)^{-1}\| = O
\left( 
\frac{1}{\log |\eta|}
\right) \qquad \text{as $|\eta| \to \infty$};
\]
see also \cite[p.~42]{Batty2007}.
Let $0 < \alpha  \leq 1$ for polynomial estimates.
If an immediately
differentiable $C_0$-semigroup $(T(t))_{t \geq 0}$
satisfies 
\begin{equation}
	\label{eq:AT_growth_poly_intro}
	\|AT(t)\| = O\left( \frac{1}{t^{1/\alpha}} \right) \qquad \text{as $t \downarrow 0$},
\end{equation}
then 
\begin{equation}
	\label{eq:resol_decay_poly_intro}
	\|(i\eta- A)^{-1}\|  = O\left( \frac{1}{|\eta|^{\alpha}} \right)  \qquad \text{as $|\eta| \to \infty$};
\end{equation}
see \cite{Crandall1969}. Conversely, it was also proved in \cite{Crandall1969} that if 
the generator $A$ of a $C_0$-semigroup
$(T(t))_{t \geq 0}$
satisfies \eqref{eq:resol_decay_poly_intro}, then
$(T(t))_{t \geq 0}$ is immediately differentiable and 
satisfies
\begin{equation}
	\label{eq:CP_intro}
	\|AT(t)\| = O\left( \frac{1}{t^{2/\alpha - 1}} \right)  \qquad \text{as $t \downarrow 0$}.
\end{equation}
This semigroup estimate was improved to 
\begin{equation}
	\label{eq:Eber_intro}
	\|AT(t)\| = O\left( \frac{1}{t^{1/\alpha+\varepsilon} } \right)  \qquad \text{as $t \downarrow 0$}
\end{equation}
for any $\varepsilon>0$ in \cite{Eberhardt1994}.
Moreover, in the Hilbert space setting,
the resolvent estimate \eqref{eq:resol_decay_poly_intro} 
implies
the semigroup estimate \eqref{eq:AT_growth_poly_intro},
as shown in \cite{Wakaiki2025_JFA}.

Whenever either of the estimates \eqref{eq:AT_growth_poly_intro}
or \eqref{eq:resol_decay_poly_intro} holds, 
$(T(t))_{t \geq 0}$ is of Gevrey class $\beta$ for all $\beta> 1/\alpha$; see \cite[Chapter~5]{Taylor1989}.
Gevrey class represents an intermediate level of regularity situated 
between holomorphic semigroups and immediately
differentiable semigroups. Therefore, 
to investigate the regularity
of solutions, resolvent estimates of the form \eqref{eq:resol_decay_poly_intro} 
were established for various types of
partial differential equations
and abstract evolution equations; see for instance \cite{Chen1990a,Graber2014,Hao2015,Badra2019,Kuang2021,Ammari2021,Avalos2021,
	Sozzo2022,Ammari2023,Rivera2024}.
Estimates for the growth rate of $\|AT(t)\|$ as $t\downarrow 0$
and the decay rate of $\|(i\eta - A)^{-1}\|$ as $|\eta| \to \infty$
were also used in examining the
differentiability of 
perturbed semigroups 
\cite{Pazy1968,Doytchinov1997,Arendt2006,Iley2007,Chen2020JEE}
and
delay semigroups \cite{Batty2004}.
A survey on this topic can be found in \cite{Batty2007}.

In this paper, we consider scales finer  than polynomial ones for
the rate of 
growth of $\|AT(t)\|$ as $t \downarrow 0$ and
decay of $\|(i\eta- A)^{-1}\|$
as $|\eta| \to \infty$.
Let $M \colon [0,\infty) \to (0,\infty)$ be a non-decreasing
continuous function such that $M(s) \to \infty$ and $M(s) = o(s)$ as $s \to \infty$.
Here $o(\cdot)$ denotes the little $o$ notation, whose
definition can be found  at the end
of this section.
We assume that $M$ is a function of positive increase.
The class of functions of positive increase was 
introduced in \cite{Rozendaal2019} for estimating the 
optimal rates of decay of $C_0$-semigroups; see 
Section~\ref{sec:positive_inc} for the definition of
functions of positive increase.
Functions of positive increase grow at least at a polynomial rate.
Hence, if the generator $A$ of a $C_0$-semigroup $(T(t))_{t \geq 0}$ with $\sigma(A) \cap i \mathbb{R}$ compact
satisfies
\begin{equation}
	\label{eq:resol_decay_intro}
	\|(i \eta - A)^{-1}\| \leq \frac{1}{M(|\eta|)}\quad 
	\text{for all $\eta \in \mathbb{R}$ with $|\eta|$ sufficiently large},
\end{equation}
then $(T(t))_{t \geq 0}$ is immediately differentiable by
\cite[Corollary~2.4.10]{Pazy1983}, which
is also stated in Corollary~\ref{coro:suff_id} of this paper.
We also note that the assumption that $M(s) = o(s)$ as $s \to \infty$ is not a strong restriction in the following sense: 
$M(s) = O(s)$ as $s \to \infty$
is necessary for the estimate \eqref{eq:resol_decay_intro}
to hold; see Remark~\ref{rem:M_O_s}.

First, we consider a Banach space 
semigroup $(T(t))_{t \geq 0}$.
For some sufficiently large $s_1\geq 0$, we can define $M_{\log}\colon
[s_1,\infty) \to (0,\infty)$ by
\[
M_{\log}(s) \coloneqq  \frac{M(s)}{\log(s/M(s))}.
\]
In Theorem~\ref{thm:non_exp}, we show that 
if $M_{\log}$ is non-decreasing, then the resolvent estimate 
\eqref{eq:resol_decay_intro} implies that
\begin{equation}
	\label{eq:semigroup_growth_intro}
	\|AT(t)\| = O \left(M_{\log}^{-1} \left(\frac{1}{ct} \right) \right)\qquad \text{as $t \downarrow 0$}
\end{equation}
for some constant $c \in (0,1)$, where 
$M_{\log}^{-1}$ is the left-inverse of $M_{\log}$.
Moreover,
we derive in
Theorem~\ref{thm:lower_bound} 
the following lower bound for an immediately differentiable $C_0$-semigroup $(T(t))_{t \geq 0}$ under certain assumptions:
\begin{equation}
	\label{eq:semigroup_growth_lower_intro}
	CN^{-1}\left(
	\frac{1}{ct}
	\right) \leq \sup_{t\leq \tau \leq 1}\|AT(\tau)\|
\end{equation}
for some constants $C,c>0$ and all sufficiently small $t>0$, where
the function $N$ is defined by
\begin{equation}
	\label{eq:N_def_intro}
	N(s) \coloneqq \frac{1}{\sup_{|\eta| \geq s} \|(i \eta- A)^{-1}\|}
\end{equation}
for sufficiently large $s \geq 0$.
If $M(s) = O(s^{\alpha})$ as $s \to \infty$ for some constant $\alpha \in (0,1)$, then
the semigroup estimate \eqref{eq:semigroup_growth_intro} can be written as
\[
\|AT(t)\| = O \left(\left(\frac{|\log t|}{t} \right)^{1/\alpha} \right)\qquad \text{as $t \downarrow 0$},
\]
which gives a better growth rate than the estimates 
\eqref{eq:CP_intro} and \eqref{eq:Eber_intro}.

There is a gap between the upper bound in \eqref{eq:semigroup_growth_intro} and the lower bound in \eqref{eq:semigroup_growth_lower_intro}.
For Hilbert space semigroups, the gap can be bridged
in Theorem~\ref{thm:Hilbert_case} by
refining the growth rate $M_{\log}^{-1} (1/(ct))$ in \eqref{eq:semigroup_growth_intro} 
to $M^{-1}(1/t)$.
This result generalizes the polynomial case where $M(s) = O(s^{\alpha})$
as $s \to \infty$ for some $\alpha \in (0,1]$, studied in \cite{Wakaiki2025_JFA}.
A sharper estimate can be derived in the case where 
$(T(t))_{t \geq 0}$ is a $C_0$-semigroup of normal operators on a Hilbert space
or a multiplication $C_0$-semigroup on an $L^p$-space with $1 \leq p < \infty$. To describe this estimate, we
let the function $N$ be as in \eqref{eq:N_def_intro}, and define 
\[
N_{\inf} (s) \coloneqq \inf_{\lambda >1} \frac{N(\lambda s)}{\log \lambda}
\]
for sufficiently large $s\geq 0$. We show in 
Theorem~\ref{thm:mul_lower_bound} that
if $(T(t))_{t \geq 0}$ is immediately differentiable and 
if $N(s) = o(s)$ as $s\to \infty$, then 
\[
(1-\varepsilon) N_{\inf}^{-1} \left( \frac{1}{t}\right) \leq \|AT(t)\| \leq (1+\varepsilon) N_{\inf}^{-1} \left( \frac{1}{t}\right)
\]
for all sufficiently small $t >0$
whenever $\varepsilon \in (0,1)$.

The approach adopted in this study is inspired by \cite{Chill2016,Rozendaal2019},
which investigated the relation between 
the decay of $\|T(t)(1-A)^{-1}\|$ as $t \to \infty$ and
the growth of $\|(i\eta - A)^{-1}\|$ as $|\eta| \to \infty$ for
a bounded $C_0$-semigroup $(T(t))_{t \geq 0}$.
These studies \cite{Chill2016,Rozendaal2019} built upon \cite{Borichev2010,Batty2016}, and 
the techniques developed in \cite{Borichev2010,Batty2016}
were used in \cite{Wakaiki2025_JFA} to prove that the resolvent estimate
\eqref{eq:resol_decay_poly_intro}
implies the semigroup estimate \eqref{eq:AT_growth_poly_intro}
for each $\alpha \in (0,1]$.
Therefore, 
it is a natural progression to employ the ideas from \cite{Chill2016,Rozendaal2019} to extend the results presented in \cite{Wakaiki2025_JFA}.

This paper is organized as follows:
In Section~\ref{sec:preliminaries}, 
we review the definitions and basic properties 
of left-inverses and functions of positive increase.
We also recall the characterization of immediately
differentiable $C_0$-semigroups.
In Section~\ref{sec:Banach}, we consider
Banach space semigroups and establish
an upper bound for the rate of growth of $\|AT(t)\|$
as $t \downarrow 0$ under assumptions
on the resolvent decay.
In Section~\ref{sec:lower_bound},
we present a lower bound for the rate of
the semigroup growth.
For Hilbert space semigroups, we improve the upper bound in
Section~\ref{sec:Hilbert}.
In Section~\ref{sec:nec_pi}, we show that 
the assumption of positive increase is necessary 
for this improved estimate to hold.
Finally, in Section~\ref{sec:multiplication},
a better quantified asymptotic result is given for $C_0$-semigroups of 
normal operators on Hilbert spaces 
and multiplication $C_0$-semigroups 
on $L^p$-spaces.

\paragraph{Notation}
We denote the set of positive integers by $\mathbb{N}$
and the set of non-negative integers by $\mathbb{N}_0$.
We write $\mathbb{R}_+ \coloneqq [0,\infty)$, 
$\overline{\mathbb{C}}_+ \coloneqq \{ z \in \mathbb{C}: \re z \geq 0 \}$, and 
$\mathbb{C}_- \coloneqq \{ z \in \mathbb{C}: \re z <0 \}$.
For $z \in \mathbb{C}$ and 
a subset $\Omega$ of  $\mathbb{C}$, we write
$\dist(z,\Omega) \coloneqq \inf\{ |z-\lambda|:\lambda \in \Omega  \}$.

Let $b \geq 0$ and 
$f,g\colon [b,\infty) \to (0,\infty)$.
We write 
\[
f(t) = O(g(t))	\qquad \text{as $t \to \infty$}
\]
if
there exist constants $C>0$ and $t_0 \geq b$ 
such that $f(t) \leq C g(t)$ for all $t \geq t_0$.
We write 
\[
f(t) = o(g(t))	\qquad \text{as $t \to \infty$}
\]	
if 
for all $\varepsilon>0$, there exists a constant $t_0 \geq b$
such that $f(t) \leq \varepsilon g(t)$ for all $t \geq t_0$.
We write 
\[
f(t) \asymp g(t)	\qquad \text{as $t \to \infty$}
\]	
if $f(t) = O(g(t))$ and $g(t) = O(f(t))$ as $t \to \infty$.
Analogous notation is used to describe
asymptotic behavior in other limits such as $t \downarrow 0$.

Throughout this paper, all Banach and Hilbert spaces are assumed to be complex.
Let $X$ be a Banach space. We denote by $\mathcal{L}(X)$
the algebra of bounded linear operators on $X$.
Let $A$ be a closed linear operator on $X$.
We write $D(A)$ for 
the domain of $A$.
We denote 
the spectrum of $A$ 
by $\sigma(A)$ and the resolvent set of $A$
by $\varrho(A)$.
For $z \in \varrho(A)$, we write $R(z,A) 
\coloneqq (z - A)^{-1}$.
We define the Fourier transform of $f \in L^1(\mathbb{R},X)$ by
\[
\mathcal{F}f(\eta) \coloneqq 
\int_{-\infty}^{\infty} e^{- i \eta t} f(t) dt,\quad \eta \in \mathbb{R}.
\]

\section{Preliminaries}
\label{sec:preliminaries}
In this section, we collect the fundamental definitions
and facts that will be used throughout this paper.
\subsection{Left-inverses}
\label{sec:left_inv}
First, we recall the definition and 
properties of left-inverses; see \cite{Embrechts2013} and \cite[Section~2.3]{Harjulehto2019}
for details of left-inverses.
Let $b \geq 0$ and let a non-decreasing function
$M \colon [b,\infty) \to \mathbb{R}_+$ satisfy 
$M(s) \to \infty$ as $s \to \infty$.
We denote 
the {\em left-inverse} of $M$
by $M^{-1}\colon [M(b),\infty) \to [b,\infty)$, which is
defined by
\[
M^{-1}(t) \coloneqq \inf\{ s \geq b: M(s) \geq t \}.
\]

The left-inverse $M^{-1}$ is non-decreasing and left-continuous.
Moreover, $M^{-1}(t) \to \infty$ as $t\to \infty$, and
\begin{equation}
	\label{eq:li_prop1}
	M^{-1}(M(s)) \leq s
\end{equation}
for all $s \geq b$.
If $M$ is continuous, then
\begin{equation}
	\label{eq:li_prop2}
	M(M^{-1}(t)) = t
\end{equation}
for all $t \geq M(b)$.
Except in Section~\ref{sec:estimate_AT_qm},
$M$ is considered to be continuous 
when the left-inverse $M^{-1}$ is used.
If $M$ is only left-continuous, then
\begin{equation}
	\label{eq:li_prop3}
	M(M^{-1}(t)) \leq t
\end{equation}
for all $t \geq  M(b)$.

The following lemma provides
a simple growth property of left-inverses.
\begin{lemma}
	\label{lem:left_inv_small_o}
	Let $b\geq 0$ and
	let $M\colon [b,\infty) \to \mathbb{R}_+$ be a 
	non-decreasing continuous function such that 
	$M(s) \to \infty$ as $s \to \infty$.
	If $M(s) = o(s)$ as $s \to \infty$,
	then $t = o(M^{-1}(t))$
	as $t \to \infty$.
\end{lemma}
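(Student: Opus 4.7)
The plan is to reduce the claim $t = o(M^{-1}(t))$ to the hypothesis $M(s) = o(s)$ by the substitution $s = M^{-1}(t)$. Concretely, I would form the ratio $t/M^{-1}(t)$ and rewrite the numerator using the identity $M(M^{-1}(t)) = t$, which is available from \eqref{eq:li_prop2} since $M$ is assumed continuous and $t$ will be taken at least as large as $M(b)$.

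With $s \coloneqq M^{-1}(t)$, this gives
\[
\frac{t}{M^{-1}(t)} = \frac{M(s)}{s}.
\]
Next I would note that the left-inverse $M^{-1}$ satisfies $M^{-1}(t) \to \infty$ as $t \to \infty$ (a property recorded just before the lemma), so $s \to \infty$ as $t \to \infty$. Applying the hypothesis $M(s) = o(s)$ along the sequence $s_t \coloneqq M^{-1}(t)$ then yields $t/M^{-1}(t) \to 0$, which is precisely the statement $t = o(M^{-1}(t))$ as $t \to \infty$.

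There is no real obstacle here beyond being careful about the range of $t$: one restricts to $t \geq M(b)$ so that $M^{-1}(t)$ lies in the domain where \eqref{eq:li_prop2} applies, which is irrelevant for the asymptotic conclusion. The proof is essentially a single change of variables and invokes only the continuity of $M$ (to get equality rather than just the inequality \eqref{eq:li_prop3}) together with the standing hypothesis on $M$.
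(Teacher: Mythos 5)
Your proof is correct and follows essentially the same route as the paper's: both transfer the hypothesis $M(s)=o(s)$ to the conclusion by passing to $s=M^{-1}(t)$, using the continuity of $M$ and the fact that $M^{-1}(t)\to\infty$. Your direct use of the identity $M(M^{-1}(t))=t$ from \eqref{eq:li_prop2} is in fact a slightly cleaner execution than the paper's, which instead solves $t=M(s_t)+\varepsilon$ and bounds $s_t\le M^{-1}(t)$.
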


\begin{proof}
	Let $\varepsilon >0$ be arbitrary. By assumption,
	there exists $s_1 \geq b$ such that
	\begin{equation}
		\label{eq:M_eps_bound}
		M(s) \leq \varepsilon s
	\end{equation}
	for all $s \geq s_1$.
	Since $M^{-1}(t) \to \infty$ as $t \to \infty$,
	there exists $t_0 \geq M(b)$ such that 
	\begin{equation}
		\label{eq:M_inv_1}
		M^{-1}(t) \geq 1
	\end{equation}
	for all $t \geq t_0$.
	Let $t \geq \max\{ M(s_1) + \varepsilon,\, t_0\}$.
	Since $M$ is continuous and satisfies 
	$M(s) \to \infty$ as $s \to \infty$, there exists
	$s_t \geq s_1$ such that 
	\begin{equation}
		\label{eq:t_def_st}
		t = M(s_t)+\varepsilon.
	\end{equation}
	By the definition of left-inverses, 
	\begin{equation}
		\label{eq:st_upper_bound}
		s_t \leq M^{-1}(M(s_t)+\varepsilon) = M^{-1}(t).
	\end{equation}
	The estimate \eqref{eq:M_eps_bound} leads to
	\[
	\frac{M(s_t)}{\varepsilon}  \leq s_t.
	\]
	This, together with \eqref{eq:t_def_st}
	and \eqref{eq:st_upper_bound}, yields
	\[
	\frac{t-\varepsilon }{\varepsilon} =
	\frac{M(s_t)}{\varepsilon}  \leq 
	M^{-1}(t).
	\]
	Using the inequality~\eqref{eq:M_inv_1}, we derive
	\[
	t \leq \varepsilon (M^{-1}(t) + 1) \leq 2 \varepsilon M^{-1}(t).
	\]
	Since $\varepsilon >0$ is arbitrary, it follows that 
	$t = o(M^{-1}(t))$ as $t \to \infty$.
\end{proof}

\subsection{Functions of positive increase}
\label{sec:positive_inc}
Next, we recall the definition of functions of
positive increase introduced in \cite{Rozendaal2019}.
Let $b \geq 0$.
We say that a measurable function
$M \colon [b,\infty) \to (0,\infty)$ 
{\em has  positive increase} if
there exist constants 
$\alpha >0$, $c_0 \in (0,1]$, and $s_0 \geq b$ such that 
\begin{equation}
	\label{eq:positive_inc_cond}
	\frac{M(\lambda s)}{M(s)} \geq c_0\lambda^\alpha
	\qquad \text{for all $\lambda \geq 1$ and $s \geq s_0$.} 
\end{equation}

If \eqref{eq:positive_inc_cond} holds, then
there exists a constant $c_1 >0$
such that
\begin{equation}
	\label{eq:M_poly_lower_bound}
	M(s) \geq  c_1 s^{\alpha}
\end{equation}
for all $s \geq s_0$. Therefore, functions of positive increase
grow at least at a polynomial rate.
For this class of functions, the following integral estimate
also holds.
\begin{lemma}
	\label{lem:integral_estimate}
	Let $b \geq 0$
	and let $M \colon [b,\infty) \to (0,\infty)$ be a measurable
	function. 
	Let constants
	$\alpha >0$, $c_0 \in (0,1]$, and $s_0 \geq b$ 
	satisfy \eqref{eq:positive_inc_cond}.
	Then for all $\gamma >2/\alpha$,
	\[
	\int^{\infty}_{s_0} \frac{s}{M(s)^{\gamma}} ds \leq 
	\frac{s_0^2}{(\alpha \gamma-2)c_0^\gamma M(s_0)^{\gamma}}.
	\]
\end{lemma}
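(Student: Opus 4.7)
The plan is straightforward: the bound follows from applying the positive-increase condition pointwise to replace $M(s)$ with its polynomial lower bound, and then explicitly evaluating the resulting power integral.

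First I would take an arbitrary $s \geq s_0$ and write $s = \lambda s_0$ with $\lambda = s/s_0 \geq 1$. Feeding this into \eqref{eq:positive_inc_cond} yields
\[
M(s) \;\geq\; c_0 \left(\frac{s}{s_0}\right)^{\alpha} M(s_0),
\]
which is essentially the inequality \eqref{eq:M_poly_lower_bound}, but with an explicit constant tied to the base point $s_0$. Raising to the $\gamma$-th power and dividing gives the pointwise estimate
\[
\frac{s}{M(s)^{\gamma}} \;\leq\; \frac{s_0^{\alpha \gamma}}{c_0^{\gamma} M(s_0)^{\gamma}} \cdot \frac{1}{s^{\alpha \gamma - 1}}.
\]

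Next I would integrate this inequality over $[s_0, \infty)$. Since $\gamma > 2/\alpha$ implies $\alpha \gamma - 1 > 1$, the tail integral converges and equals
\[
\int_{s_0}^{\infty} \frac{ds}{s^{\alpha \gamma - 1}} \;=\; \frac{s_0^{-(\alpha \gamma - 2)}}{\alpha \gamma - 2}.
\]
Multiplying through by the constant in front and simplifying the powers of $s_0$ gives
\[
\int_{s_0}^{\infty} \frac{s}{M(s)^{\gamma}}\, ds \;\leq\; \frac{s_0^{\alpha \gamma}}{c_0^{\gamma} M(s_0)^{\gamma}} \cdot \frac{s_0^{-(\alpha \gamma - 2)}}{\alpha \gamma - 2} \;=\; \frac{s_0^{2}}{(\alpha \gamma - 2)\, c_0^{\gamma} M(s_0)^{\gamma}},
\]
which is precisely the stated bound.

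There is essentially no obstacle here beyond bookkeeping: the positive-increase condition converts the abstract lower bound on $M$ into a concrete power of $s$, and the hypothesis $\gamma > 2/\alpha$ is exactly what is needed for the resulting tail integral to be finite. The only subtlety worth flagging is that the estimate is anchored at $s_0$ (not at some generic reference point), so the powers of $s_0$ must be tracked carefully to recover the clean form $s_0^{2}/M(s_0)^{\gamma}$ on the right-hand side.
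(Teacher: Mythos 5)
Your proof is correct and follows essentially the same route as the paper: apply the positive-increase condition at the base point $s_0$ to get the pointwise bound $1/M(s) \leq (s_0/s)^{\alpha}/(c_0 M(s_0))$, then evaluate the resulting power-tail integral, using $\gamma > 2/\alpha$ for convergence. The bookkeeping of the powers of $s_0$ matches the paper's computation exactly.
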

\begin{proof}
	By \eqref{eq:positive_inc_cond},
	\[
	\frac{1}{M(s)} \leq 
	\left( \frac{s_0}{s} \right)^{\alpha} \frac{1}{c_0M(s_0)}
	\]
	for all $s \geq s_0$. If
	$\gamma > 2/\alpha$, then
	\begin{align*}
		\int^{\infty}_{s_0} \frac{s}{M(s)^{\gamma}} ds &\leq 
		\int^{\infty}_{s_0} \left( \left( \frac{s_0}{s} \right)^{\alpha} \frac{1}{c_0M(s_0)}
		\right)^{\gamma} sds \\
		&=
		\frac{s_0^{\alpha \gamma}}{c_0^\gamma M(s_0)^\gamma}
		\int^{\infty}_{s_0} \frac{1}{s^{\alpha \gamma-1}}ds \\
		&=\frac{s_0^2}{(\alpha \gamma -2)c_0^{\gamma}M(s_0)^{\gamma}}. 
	\end{align*}
\end{proof}

The next result was proved for 
right-inverses in \cite[Proposition~2.2]{Rozendaal2019}.
The same argument can be applied to 
the case of left-inverses.
\begin{lemma}
	\label{lem:RSS22}
	Let $b \geq 0$
	and let $M \colon [b,\infty) \to (0,\infty)$
	be a non-decreasing continuous function such that $M(s) \to \infty$ as $s \to \infty$.
	If $M$ has positive increase, then for all  $c>0$,
	\[
	M^{-1}(t) \asymp M^{-1}(ct)	\qquad \text{as $t \to \infty$}.
	\]
\end{lemma}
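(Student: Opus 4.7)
The plan is to argue that the same strategy used for right-inverses in \cite{Rozendaal2019} carries over almost verbatim: reduce to the case $c\geq 1$, use the positive-increase inequality to exhibit a small multiple of $M^{-1}(t)$ at which $M$ already exceeds $ct$, and invoke the definition of the left-inverse.

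First I would split into cases. If $c \geq 1$, then monotonicity of $M^{-1}$ gives the trivial direction $M^{-1}(t) \leq M^{-1}(ct)$, so only $M^{-1}(ct) \leq C M^{-1}(t)$ needs work. Let $\alpha,c_0,s_0$ be the positive-increase constants from \eqref{eq:positive_inc_cond}. Since $M^{-1}(t) \to \infty$ as $t\to \infty$, I may take $t$ so large that $s \coloneqq M^{-1}(t) \geq s_0$, and by continuity of $M$ the identity \eqref{eq:li_prop2} gives $M(s) = t$. Setting $\lambda \coloneqq \max\{1,(c/c_0)^{1/\alpha}\}$, the positive-increase estimate \eqref{eq:positive_inc_cond} yields
\[
M(\lambda s) \geq c_0\lambda^{\alpha} M(s) \geq c\, M(s) = ct.
\]
By the definition of the left-inverse, $M^{-1}(ct) \leq \lambda s = \lambda M^{-1}(t)$, which is the desired upper bound with constant $\lambda$ independent of $t$.

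Next, for $0 < c < 1$, I would reduce to the previous case by applying it with the constant $1/c > 1$ to the argument $ct$ (which still tends to $\infty$). This gives
\[
M^{-1}(t) = M^{-1}\!\bigl((1/c)\cdot ct\bigr) \leq \lambda' M^{-1}(ct)
\]
for all sufficiently large $t$, with $\lambda'$ depending only on $1/c,c_0,\alpha$. Combined with the trivial inequality $M^{-1}(ct) \leq M^{-1}(t)$, this yields the two-sided asymptotic equivalence $M^{-1}(t) \asymp M^{-1}(ct)$.

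There is no real obstacle here; the only minor technical point is ensuring that $s = M^{-1}(t) \geq s_0$ before invoking \eqref{eq:positive_inc_cond}, which is handled by taking $t$ sufficiently large. The proof proceeds by the same bookkeeping as in \cite[Proposition~2.2]{Rozendaal2019}, with the sole modification that the identity $M(M^{-1}(t)) = t$ is now justified by continuity of $M$ via \eqref{eq:li_prop2} rather than by the right-inverse convention used there.
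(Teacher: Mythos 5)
Your proof is correct and is exactly the adaptation the paper has in mind: the paper gives no explicit argument, simply citing \cite[Proposition~2.2]{Rozendaal2019} and noting that the same reasoning transfers to left-inverses, which is precisely what you carry out (including the correct use of \eqref{eq:li_prop2} in place of the right-inverse identity). No gaps.
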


\subsection{Immediately differentiable $C_0$-semigroups}
Finally, we collect some results on immediately differentiable 
$C_0$-semigroups.
We will use the spectral property 
presented in the following 
characterization; see, e.g., \cite[Theorem~2.4.8]{Pazy1983}
and \cite[Corollary~II.4.15]{Engel2000} for the proof.
\begin{theorem}
	\label{thm:spectral_prop}
	Let $A$ be the generator of a $C_0$-semigroup 
	$(T(t))_{t \geq 0}$ on a Banach space.
	Let $\omega_0$
	be the exponential growth bound of $(T(t))_{t \geq 0}$,
	and let $\omega > \omega_0$. Then
	the following statements are equivalent:
	\begin{enumerate}
		\renewcommand{\labelenumi}{\rm{(\roman{enumi})}}
		\item $(T(t))_{t \geq 0}$ is immediately differentiable.
		\item For all $q >0$, there exist constants $p,C >0$
		such that 
		\[
		\Theta \coloneqq
		\left\{
		z \in \mathbb{C} : p e^{-q \re z}
		\leq |\im z|
		\right\} \subset \varrho(A)
		\] 
		and 
		\[
		\|R(z,A)\| \leq C |\im z|
		\]
		for all $z \in \Theta$ with $\re z \leq \omega$.
	\end{enumerate}
\end{theorem}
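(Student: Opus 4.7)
The plan is to exploit the Laplace / inverse Laplace duality between $T$ and $R(\cdot,A)$, transferring regularity of $t \mapsto T(t)$ into analytic and pointwise properties of $\lambda \mapsto R(\lambda, A)$ on the logarithmic region $\Theta$, and conversely.

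For (i) $\Rightarrow$ (ii), immediate differentiability gives that $t \mapsto T(t)$ lies in $C^\infty((0,\infty), \mathcal{L}(X))$ with $T^{(n)}(t) = A^n T(t) = (AT(t/n))^n$. A closed-graph / uniform-boundedness argument produces a locally bounded function $\varphi$ on $(0,\infty)$ with $\|AT(t)\| \leq \varphi(t)$, and iterating the semigroup law yields bounds on $\|A^n T(t)\|$ from which the Taylor series $T(z) = \sum_{n \geq 0}\frac{(z-t)^n}{n!} A^n T(t)$ extends $T$ holomorphically into a region whose width near the imaginary axis is only logarithmic in $\re z$. Deforming the Laplace representation $R(\lambda,A)x = \int_0^\infty e^{-\lambda s} T(s) x\, ds$ (initially valid for $\re \lambda > \omega_0$) into this holomorphic extension and applying Cauchy's theorem then yields the bound $\|R(\lambda,A)\| \leq C|\im \lambda|$ on the desired set $\Theta$; the free parameter $q$ on the $\Theta$-side encodes how steeply the extension of $T$ narrows as $\re z \downarrow 0$.

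For (ii) $\Rightarrow$ (i), fix $t > 0$ and $x \in D(A)$, and choose $q \in (0, t/3)$ with corresponding $p, C$ from (ii). Start from the complex inversion formula $T(t)x = \frac{1}{2\pi i}\int_{\omega_1 - i\infty}^{\omega_1 + i\infty} e^{\lambda t} R(\lambda,A)x\, d\lambda$ on a vertical line $\re \lambda = \omega_1 > \omega$; this is absolutely convergent for $x \in D(A)$ after one resolvent identity replaces a factor of $\lambda$ by $A$. Using $\Theta \subset \varrho(A)$ together with the stated norm bound, deform the contour onto $\Gamma_q = \{\lambda : \re \lambda \leq \omega,\ |\im \lambda| = p e^{-q \re \lambda}\}$, joined to the vertical line by a bounded arc. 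Parametrising $\Gamma_q$ by $r = -\re \lambda$, the integrand for $AT(t)x$ obeys $\|\lambda\, e^{\lambda t}\, R(\lambda,A)x\|\,|d\lambda| \lesssim p^{3} q\, e^{(3q - t) r}\, dr$, which is integrable since $3q < t$. Therefore $T(t)x \in D(A)$ with $\frac{d}{dt}T(t)x = AT(t)x$, first on $D(A)$ and then on all of $X$ by density together with the uniform bound on $\|AT(t)\|$ that the same integral produces.

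The main obstacle is the contour deformation combined with the handling of the factor $\lambda$. Sliding the vertical line at $\omega_1$ onto $\Gamma_q$ requires that the entire unbounded region between them lies in $\varrho(A)$ with locally bounded resolvent, which follows from the hypothesis $\Theta \subset \varrho(A)$ combined with the elementary bound on $\{\re \lambda > \omega_0\}$, provided the junctions near $\re \lambda = \omega$ are smoothed by a finite arc. Because $\lambda R(\lambda,A)$ grows linearly in $|\im \lambda|$ on the initial vertical line, the inversion must first be set up on $D(A)$ (or with an auxiliary resolvent factor) to secure absolute convergence, and only then transferred to $\Gamma_q$, where the cancellation between $e^{\lambda t}$ and $|\im \lambda|^2$ finally renders the derivative integral convergent.
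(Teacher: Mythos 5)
Your argument for (ii) $\Rightarrow$ (i) is essentially the standard one (and the one in the cited sources, Pazy Theorem~2.4.8 and Engel--Nagel Corollary~II.4.15): deform the complex inversion formula onto the logarithmic contour $|\im\lambda|=pe^{-q\re\lambda}$, use the bound $\|R(\lambda,A)\|\le C|\im\lambda|$ so that the integrand for $AT(t)x$ is $O(p^3qe^{(3q-t)r})$, conclude differentiability for $t>3q$, and let $q\downarrow 0$. The outline is sound, including the need to start on $D(A)$ and to pass to all of $X$ via the operator-norm convergence of the contour integral.

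The direction (i) $\Rightarrow$ (ii), however, has a fatal gap: the claimed holomorphic extension of $T$ via the Taylor series $\sum_{n\ge 0}\frac{(z-t)^n}{n!}A^nT(t)$ does not exist in general. The only available bound is $\|A^nT(t)\|\le\|AT(t/n)\|^n$, so the radius of convergence at $t$ is controlled by $\liminf_n (n!)^{1/n}/\|AT(t/n)\|$, and this is positive (even allowing a $t$-dependent radius) only if $\|AT(s)\|=O(1/s)$ as $s\downarrow 0$ --- i.e.\ only if the semigroup is already holomorphic, which is precisely the case the theorem is meant to go beyond. For a genuinely non-analytic immediately differentiable semigroup (e.g.\ a normal/multiplication semigroup with $\|AT(t)\|\asymp e^{c/t^2}$, or the Gevrey-class semigroups that are the subject of this paper) the series has radius of convergence zero and there is no holomorphic extension into any complex neighbourhood of $(0,\infty)$, logarithmically narrow or otherwise; consequently the subsequent contour deformation of the Laplace representation has nothing to be deformed into. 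The correct route is purely algebraic and needs no analyticity: from the identity recorded in the paper as \eqref{eq:resol_for_lower_bound}, one obtains
\[
\bigl(\lambda e^{\lambda t}-AT(t)\bigr)R(\lambda,A)x=T(t)x+\lambda\int_0^t e^{\lambda(t-\tau)}T(\tau)x\,d\tau ,
\]
and for $\re\lambda\le\omega$ with $|\lambda e^{\lambda t}|\ge 2\|AT(t)\|$ --- which, for fixed $t>0$, is implied by $|\im\lambda|\ge pe^{-t\re\lambda}$ with $p=2\|AT(t)\|e^{|\omega| t}$ --- a Neumann-series/analytic-continuation argument starting from the half-plane $\re\lambda>\omega_0$ shows $\lambda\in\varrho(A)$ and yields $\|R(\lambda,A)\|\le C_t|\im\lambda|$ there. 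Since $t>0$ is arbitrary and plays the role of $q$, this gives (ii). You should replace the holomorphic-extension step by this argument.
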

Theorem~\ref{thm:spectral_prop}
leads to a sufficient condition for
$C_0$-semigroups to be immediately differentiable.
The proof can be found, e.g., in \cite[Corollary~2.4.10]{Pazy1983}.
\begin{corollary}
	\label{coro:suff_id}
	Let $A$ be the generator of a $C_0$-semigroup 
	$(T(t))_{t \geq 0}$ on a Banach space. 	Let $\omega_0$
	be the exponential growth bound of $(T(t))_{t \geq 0}$.
	If there exists a constant $\omega > \omega_0$ such that 
	\[
	\limsup_{|\eta| \to \infty}\, \log |\eta|\, \|R(\omega+i\eta,A)\| = 0,
	\]
	then $(T(t))_{t \geq 0}$ is immediately differentiable.
\end{corollary}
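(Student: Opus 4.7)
The plan is to verify condition (ii) of Theorem~\ref{thm:spectral_prop}, so that the characterization of immediate differentiability applies. Let $q>0$ be arbitrary; I need to find constants $p, C>0$ such that
\[
\Theta \coloneqq \{\lambda \in \mathbb{C}: pe^{-q\re\lambda} \leq |\im\lambda|\} \subset \varrho(A)
\qquad \text{and} \qquad
\|R(\lambda,A)\| \leq C|\im\lambda| \quad (\lambda \in \Theta,\ \re\lambda \leq \omega).
\]
The idea is that on the vertical line $\re\lambda = \omega$, the resolvent estimate is already given by the hypothesis, and we can push this estimate leftward by a logarithmic amount using the Neumann series, which is exactly how far the boundary of $\Theta$ reaches.

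First I would fix $\delta \in (0, 1/q)$ (the precise value to be chosen below) and invoke the hypothesis to produce $M_\delta > 0$ such that
\[
\|R(\omega + i\eta,A)\| \leq \frac{\delta}{\log|\eta|} \qquad \text{for all } |\eta| \geq M_\delta.
\]
Next I would use the standard power series expansion of the resolvent around $\omega+i\eta$: for $\lambda = s+i\eta$ with $s \leq \omega$, if $(\omega-s)\|R(\omega+i\eta,A)\| < 1$, then $\lambda \in \varrho(A)$ and
\[
\|R(\lambda,A)\| \leq \frac{\|R(\omega+i\eta,A)\|}{1 - (\omega-s)\|R(\omega+i\eta,A)\|}.
\]
The defining inequality of $\Theta$ together with $s \leq \omega$ gives the lower bound $s \geq -\frac{1}{q}\log(|\eta|/p)$, and hence
\[
\omega - s \leq \omega + \frac{1}{q}\log|\eta| - \frac{1}{q}\log p.
\]
Combining this with the hypothesis bound, one computes
\[
(\omega - s)\|R(\omega+i\eta,A)\| \leq \Bigl(\omega + \tfrac{1}{q}\log|\eta| - \tfrac{1}{q}\log p\Bigr)\cdot \frac{\delta}{\log|\eta|},
\]
which is at most $1/2$ provided $\delta$ is chosen so that $\delta/q \leq 1/4$ and $|\eta|$ is sufficiently large. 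I would then choose $p$ large enough that the condition $pe^{-q\omega} \leq |\im\lambda|$ (automatic on $\Theta \cap \{\re\lambda\leq\omega\}$) forces $|\eta|$ past every smallness threshold used above. With these choices the Neumann series converges on all of $\Theta \cap \{\re\lambda \leq \omega\}$, yielding
\[
\|R(\lambda,A)\| \leq 2\|R(\omega+i\eta,A)\| \leq \frac{2\delta}{\log|\eta|},
\]
which is trivially bounded above by $C|\im\lambda|$ for a suitable $C>0$. Applying Theorem~\ref{thm:spectral_prop} then concludes the proof.

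The main obstacle is the bookkeeping in choosing $p$ and $\delta$ consistently: the region $\Theta$ allows $\re\lambda$ to drop by order $\log|\im\lambda|/q$ below $\omega$, and this is precisely matched, up to the constant factor, by the reciprocal of the hypothesized resolvent bound $\delta/\log|\eta|$. Thus the little-$o$ assumption $\log|\eta|\,\|R(\omega+i\eta,A)\| \to 0$ is exactly what provides enough slack, for every $q>0$, to ensure the Neumann expansion remains valid throughout $\Theta \cap \{\re\lambda \leq \omega\}$; had we only a big-$O$ assumption, this argument would succeed for some fixed $q$ but not for arbitrarily large $q$.
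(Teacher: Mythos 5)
Your argument is correct and is essentially the standard Neumann-series proof that the paper delegates to \cite[Corollary~2.4.10]{Pazy1983}: the little-$o$ hypothesis lets you take $\delta$ with $\delta/q<1/4$ for the given $q$, the defining inequality of $\Theta$ caps $\omega-\re\lambda$ by roughly $\tfrac{1}{q}\log|\im\lambda|$, and enlarging $p$ forces $|\im\lambda|\geq pe^{-q\omega}$ past every threshold, so the expansion of $R(\lambda,A)$ about $\omega+i\im\lambda$ converges with the stated bound (the part of $\Theta$ with $\re\lambda>\omega$ lies in $\varrho(A)$ automatically since $\omega>\omega_0$). One small slip in your closing remark: under a big-$O$ assumption $\log|\eta|\,\|R(\omega+i\eta,A)\|\leq K$, the estimate $(\omega-s)\|R(\omega+i\eta,A)\|\lesssim K/q$ shows the argument succeeds precisely for \emph{large} $q$ (where $\Theta$ is a narrower region) and fails for small $q$, not the other way around; this does not affect the proof itself.
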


Let $A$ be the generator of a $C_0$-semigroup
on a Banach space $X$, and let $z \in \varrho(A)$.
For  $t \geq 0$ and
$y \in D(A)$, we have
\begin{equation}
	\label{eq:semigroup_int_formula}
	e^{z t} y = T(t)y  + 
	\int^t_0 e^{z(t - \tau) }T(\tau) (z - A)yd\tau;
\end{equation}
see, e.g., \cite[Lemma~II.1.9]{Engel2000}.
Let $x \in X$ and set $y \coloneqq R(z,A)x$.
Differentiating both sides of \eqref{eq:semigroup_int_formula} 
with respect to $t$, we obtain
\begin{equation}
	\label{eq:resol_for_lower_bound}
	z  e^{z  t} R(z ,A)x = 
	AT(t) R(z ,A)x + T(t)x + z \int^t_0 e^{z  (t-\tau)}
	T(\tau) x d\tau
\end{equation}
for all $t >0$.
Using \eqref{eq:resol_for_lower_bound} as in the proof of
\cite[Theorem~2.1]{Crandall1969},
we obtain the following 
corollary as a further consequence of Theorem~\ref{thm:spectral_prop}.
\begin{corollary}
	Let $A$ be the generator of an immediately 
	differentiable $C_0$-semigroup 
	$(T(t))_{t \geq 0}$ on a Banach space $X$.
	Then there exists a constant $b\geq 0$ such that 
	$\sigma(A) \cap i \mathbb{R} \subset (-ib,ib)$.
	Moreover, for all $b \geq 0$ satisfying
	$\sigma(A) \cap i \mathbb{R} \subset (-ib,ib)$,
	\begin{equation}
		\label{eq:resol_bound_id}
		\sup_{|\eta| \geq b} \|R(i \eta,A)\| < \infty.
	\end{equation}
\end{corollary}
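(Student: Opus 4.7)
My plan is to use Theorem~\ref{thm:spectral_prop} to locate $\sigma(A) \cap i\mathbb{R}$ inside a bounded interval, and then to exploit the identity~\eqref{eq:resol_for_lower_bound} to convert the a priori bound $\|R(\lambda, A)\| \leq C|\im \lambda|$ (which alone grows linearly along the imaginary axis and is therefore useless for a uniform estimate) into a genuine uniform bound.

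For the first assertion, fix any $q > 0$ and apply Theorem~\ref{thm:spectral_prop}: there exist constants $p, C > 0$ such that the set $\Theta = \{\lambda \in \mathbb{C} : p\,e^{-q \re \lambda} \leq |\im \lambda|\}$ lies in $\varrho(A)$. For $\lambda = i\eta$ on the imaginary axis this condition reduces to $|\eta| \geq p$, so setting $b \coloneqq p$ gives $i\eta \in \varrho(A)$ whenever $|\eta| \geq b$; hence $\sigma(A) \cap i\mathbb{R} \subset (-ib, ib)$.

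For the uniform resolvent bound, fix a single $t_0 > 0$ and substitute $\lambda = i\eta$ with $|\eta| \geq b$ into \eqref{eq:resol_for_lower_bound}. Since $(T(t))_{t\geq 0}$ is immediately differentiable, $AT(t_0) \in \mathcal{L}(X)$, and since $AT(t_0)$ commutes with $R(i\eta, A)$, the identity rearranges to
\[
\bigl(i\eta\, e^{i\eta t_0} I - AT(t_0)\bigr) R(i\eta, A)x = T(t_0)x + i\eta \int_0^{t_0} e^{i\eta(t_0-\tau)} T(\tau) x \, d\tau.
\]
Set $M_0 \coloneqq \|AT(t_0)\|$ and $K_0 \coloneqq \sup_{0 \leq \tau \leq t_0} \|T(\tau)\|$. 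For $|\eta| > M_0$ a Neumann series inverts the operator on the left-hand side with inverse norm at most $1/(|\eta| - M_0)$, while the right-hand side has norm at most $K_0(1 + |\eta|\,t_0)\|x\|$. This yields
\[
\|R(i\eta, A)\| \leq \frac{K_0(1 + |\eta|\,t_0)}{|\eta| - M_0},
\]
whose limit as $|\eta| \to \infty$ equals $K_0 t_0 < \infty$. On the remaining compact range $b \leq |\eta| \leq R$ for any $R > M_0$, continuity of $\eta \mapsto R(i\eta, A)$ on $\varrho(A)$ supplies boundedness, and combining the two ranges gives \eqref{eq:resol_bound_id}.

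The main conceptual obstacle is precisely this cancellation of the linearly growing factor $|\im \lambda|$ delivered by Theorem~\ref{thm:spectral_prop}. The algebraic identity \eqref{eq:resol_for_lower_bound} is what makes it happen: because $|i\eta\, e^{i\eta t_0}| = |\eta|$ eventually dominates the fixed operator norm $\|AT(t_0)\|$, the factor $|\eta|$ appearing in front of the integral on the right-hand side is absorbed after inversion, leaving a quantity that stabilizes to $K_0 t_0$ as $|\eta|\to\infty$.
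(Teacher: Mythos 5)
Your proof is correct and follows essentially the same route as the paper: both parts rest on Theorem~\ref{thm:spectral_prop} for the spectral inclusion and on the identity \eqref{eq:resol_for_lower_bound} at a fixed time to obtain the uniform resolvent bound. The only (cosmetic) difference is that you invert $i\eta e^{i\eta t_0}I - AT(t_0)$ by a Neumann series, whereas the paper simply absorbs the term $\|AT(1)\|\,\|R(i\eta,A)\|/|\eta|$ into the left-hand side for $|\eta|\geq 2\|AT(1)\|$; the two steps are equivalent, and your explicit treatment of the remaining compact range of $\eta$ by continuity of the resolvent is a detail the paper leaves implicit.
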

\begin{proof}
	The existence of a constant $b\geq 0$ satisfying
	$\sigma(A) \cap i \mathbb{R} \subset (-ib,ib)$
	follows immediately from 
	Theorem~\ref{thm:spectral_prop}. 
	Let $b \geq 0$ satisfy
	$\sigma(A) \cap i \mathbb{R} \subset (-ib,ib)$, and
	let $\eta \in \mathbb{R}$ satisfy $|\eta| \geq 
	\max\{b,\,1\}$.
	Define $C_T \coloneqq \sup_{0\leq t \leq 1} \|T(t)\|$.
	Substituting
	$z = i \eta$ and $t=1$ into
	\eqref{eq:resol_for_lower_bound},
	we obtain
	\[
	\|R(i\eta, A)\| \leq 
	\frac{1}{|\eta|}(
	\|AT(1)\|\, \|R(i\eta,A)\| + C_T
	) + C_T.
	\]
	This estimate gives
	$
	\|R(i\eta, A)\| \leq 
	4C_T
	$
	if $|\eta| \geq 2 \|AT(1)\|$.
	Since $\|R(\cdot ,A)\|$ is bounded on
	compact subsets of $\varrho(A)$, we conclude that
	the estimate \eqref{eq:resol_bound_id} holds.
\end{proof}

By the estimate \eqref{eq:resol_bound_id},
the function $M \colon [b,\infty) \to (0,\infty)$ given by
\[
M(s) \coloneqq \frac{1}{\sup_{|\eta| \geq s} \|R(i \eta,A)\|}
\]
is well defined.
This function will be used in Sections~\ref{sec:lower_bound}
and \ref{sec:multiplication}.

\section{Growth of Banach space semigroups}
\label{sec:Banach}
In this section, we consider 
a Banach space semigroup $(T(t))_{t \geq 0}$
with generator $A$ and derive an upper bound for 
the rate of growth of $\|AT(t)\|$ as $t \downarrow 0$
under certain assumptions on the rate of decay of 
$\|R(i \eta,A)\|$ as $|\eta| \to \infty$. 
Let $M \colon \mathbb{R}_+ \to (0,\infty)$
be a non-decreasing continuous function such that 
the following three conditions hold:
\begin{enumerate}
	[label=(C\arabic*), leftmargin=2.5em, labelsep=0.5em]
	\item $M$ has positive increase.
	\item $M(s) = o(s)$ as $s \to \infty$. In particular,
	there exists $\widetilde s_1 > 0$ such that 
	$M(s) < s$
	for all $s \geq \widetilde s_1$.
	\item There exists $s_1 \geq \widetilde s_1$ such that 
	the function $s \mapsto M(s)/\log(s/M(s))$ is
	non-decreasing on $[s_1,\infty)$.
\end{enumerate}
Under conditions (C1)--(C3), we define $M_{\log} \colon
[s_1,\infty) \to (0,\infty)$ by
\begin{equation}
	\label{eq:Mlog_def}
	M_{\log}(s) \coloneqq \frac{M(s)}{\log(s/M(s))}.
\end{equation}
Then $M_{\log}$ is non-decreasing and continuous. 
Since $M$ grows at least at a polynomial rate,
it follows that
$M_{\log}(s) \to \infty$
as $s \to \infty$. These properties of  $M_{\log}$  are used for
the left-inverse $M_{\log}^{-1}$.

The following result  provides an improvement on the 
semigroup estimates obtained in \cite[Theorem~2.3]{Crandall1969}
and \cite[Theorem~2]{Eberhardt1994}.
We prove it by using the argument in the proof of
\cite[Theorem~2.1]{Chill2016}.
\begin{theorem}
	\label{thm:non_exp}
	Let $A$ be the generator of a $C_0$-semigroup
	$(T(t))_{t \geq 0}$ on a Banach space 
	$X$ such that $\sigma(A) \cap i \mathbb{R} \subset (-i b, i b)$ for some $b \geq 0$.
	Let $M \colon \mathbb{R}_+ \to (0,\infty)$
	be a non-decreasing continuous  function such that conditions {\rm (C1)--(C3)} hold,
	and let $M_{\log} \colon [s_1,\infty) \to (0,\infty)$
	be defined by \eqref{eq:Mlog_def}.
	If
	\begin{equation}
		\label{eq:resol_bound_Banach}
		\|R( i \eta,A)\| \leq \frac{1}{M(|\eta|)}
	\end{equation}
	for all $\eta \in \mathbb{R}$ with $|\eta| \geq b$, 
	then $(T(t))_{t \geq 0}$ is 
	immediately differentiable, and there exists 
	a constant $c \in (0,1)$
	such that
	\begin{equation}
		\label{eq:AT_bound_Banach_Thm}
		\|AT(t)\| = O \left(M_{\log}^{-1} \left(\frac{1}{ct} \right) \right)
		\qquad \text{as $t \downarrow 0$}.
	\end{equation}
\end{theorem}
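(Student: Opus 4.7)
The plan is to combine two ingredients. Immediate differentiability follows for free from Corollary~\ref{coro:suff_id}: positive increase gives $M(s) \geq c_1 s^{\alpha}$ by \eqref{eq:M_poly_lower_bound}, so $\log|\eta|\,\|R(i\eta,A)\| \leq \log|\eta|/M(|\eta|) \to 0$ as $|\eta| \to \infty$. For the growth estimate \eqref{eq:AT_bound_Banach_Thm}, I would represent $AT(t)$ as a contour integral of the resolvent along a curve whose depth in the left half-plane is tuned to the critical scale $R := M_{\log}^{-1}(1/(ct))$.

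First, promote \eqref{eq:resol_bound_Banach} to a strip estimate via Neumann series around each $i\eta$: for $|\eta| \geq b$ and $0 \leq \sigma \leq M(|\eta|)/2$, the point $\lambda = -\sigma + i\eta$ lies in $\varrho(A)$ with $\|R(\lambda,A)\| \leq 2/M(|\eta|)$. Starting from the inverse Laplace representation of $T(t)$ on a vertical line $\re\lambda = \omega > \omega_0$ (applied after regularisation so that the integral converges), differentiating in $t$, and deforming the contour into this strip, one arrives at
$$AT(t) = \frac{1}{2\pi i}\int_{\Gamma_t}\lambda\, e^{\lambda t} R(\lambda,A)\,d\lambda,$$
where $\Gamma_t = \{-\phi_t(\eta) + i\eta : \eta \in \mathbb{R}\}$ for a depth function $\phi_t \geq 0$ to be chosen. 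Near the finite set $\sigma(A) \cap i\mathbb{R}$, Theorem~\ref{thm:spectral_prop} is used to keep $\Gamma_t$ inside $\varrho(A)$.

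The heart of the argument is the choice
$$\phi_t(\eta) = \min\Big\{\tfrac{1}{2}M(|\eta|),\ \tfrac{\beta}{t}\log\tfrac{|\eta|}{M(|\eta|)}\Big\}$$
for $|\eta|$ large, where $\beta \geq 1/c_0$ is a fixed constant ($c_0$ as in \eqref{eq:positive_inc_cond}). The two branches coincide exactly at $|\eta| = M_{\log}^{-1}(2\beta/t)$, which is $R$ with $c = 1/(2\beta) \in (0,1)$. On the inner arc $|\eta| \leq R$ the logarithmic branch is active, so $e^{-\phi_t(\eta)t} = (M(|\eta|)/|\eta|)^{\beta}$; combined with $|\lambda| \lesssim |\eta|$, $\|R(\lambda,A)\| \leq 2/M(|\eta|)$, and $M(|\eta|)/|\eta| \leq 1$ from (C2), the integrand is uniformly bounded, giving a contribution of order $R$. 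On the outer arc $|\eta| > R$ the Neumann branch is active, and positive increase forces $M(|\eta|)t \geq 2\beta c_0(|\eta|/R)^{\alpha}\log(R/M(R))$, yielding rapid decay of $e^{-\phi_t(\eta)t}$; a change of variables (or Lemma~\ref{lem:integral_estimate} after converting the exponential to a polynomial bound $e^{-x} \leq C_{\gamma}x^{-\gamma}$) controls the tail by a quantity of the form $R\cdot(R/M(R))^{1-\beta c_0}$, which is $O(R)$ precisely when $\beta \geq 1/c_0$.

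The main obstacle is making this Dunford-type representation of $AT(t)$ rigorous in the absence of analyticity: the integral is not absolutely convergent on a vertical line of $\varrho(A)$, so one must either regularise with a power $R(\omega,A)^{n}$ and integrate by parts to absorb the $\lambda$ factor before removing the regulariser, or exploit the factorisation $AT(t) = T(t/2)\cdot AT(t/2)$ so that only bounded quantities appear. Equally delicate is verifying that the global contour $\Gamma_t$ is well-defined and lies entirely in $\varrho(A)$, interpolating smoothly between the Neumann strip at large $|\eta|$ and the exponentially tapered region of $\varrho(A)$ from Theorem~\ref{thm:spectral_prop} near $\sigma(A) \cap i\mathbb{R}$. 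Once these technicalities are in place, Cauchy's theorem on compact approximations of $\Gamma_t$, with vanishing side contributions coming from the rapid decay of $e^{\lambda t}\|R(\lambda,A)\|$ in the strip, completes the deformation and delivers the advertised estimate.
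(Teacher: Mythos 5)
Your derivation of immediate differentiability from Corollary~\ref{coro:suff_id} is fine and matches the paper. The growth estimate, however, has a genuine gap, and I do not believe the contour-deformation strategy can be repaired to give \eqref{eq:AT_bound_Banach_Thm}. First, a sign error in the geometry: for $|\eta|\leq R=M_{\log}^{-1}(2\beta/t)$ the monotonicity of $M_{\log}$ gives $M_{\log}(|\eta|)\leq 2\beta/t$, which is equivalent to $\tfrac{1}{2}M(|\eta|)\leq \tfrac{\beta}{t}\log(|\eta|/M(|\eta|))$; so on the \emph{inner} arc the minimum is the Neumann branch $\tfrac{1}{2}M(|\eta|)$, not the logarithmic branch as you claim, and on the outer arc it is the logarithmic branch. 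Your identification is exactly reversed. This matters: the depth you want on the inner arc, $\tfrac{\beta}{t}\log(|\eta|/M(|\eta|))$, exceeds $\tfrac12 M(|\eta|)$ there, i.e.\ it leaves the strip where the Neumann series guarantees that $\lambda\in\varrho(A)$ with $\|R(\lambda,A)\|\leq 2/M(|\eta|)$, so the contour is not even known to lie in $\varrho(A)$ and the bound $e^{-\phi_t(\eta)t}=(M(|\eta|)/|\eta|)^{\beta}$ is unavailable.

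Second, and more fundamentally, with the admissible contour (depth at most $\tfrac12 M(|\eta|)$) a single power of the resolvent cannot produce the claimed rate. The inner-arc contribution is essentially $\int_b^{R}\frac{\eta}{M(\eta)}e^{-tM(\eta)/2}\,d\eta$; in the model case $M(s)=s^{\alpha}$ the substitution $u=tM(\eta)/2$ shows this is of order $t^{-(2-\alpha)/\alpha}=t^{-2/\alpha+1}$, i.e.\ exactly the Crandall--Pazy bound \eqref{eq:CP_intro}, which is much larger than the target $M_{\log}^{-1}(1/(ct))\asymp(\log(1/t)/t)^{1/\alpha}$. No choice of depth function within the Neumann strip avoids this, because for $|\eta|$ well below $R$ the exponential $e^{-tM(|\eta|)/2}$ has not yet compensated the factor $|\eta|/M(|\eta|)$ coming from $\|\lambda R(\lambda,A)\|$. (The outer arc is also problematic: the logarithmic branch gives integrand $(M(|\eta|)/|\eta|)^{\beta-1}$, which need not be integrable for a general $M$ of positive increase, e.g.\ $M(s)=s/\log^2 s$.) The paper's proof gets around both obstacles by entirely different means: it uses the representation $f_x(t)=\frac{n!}{2\pi t^n}\int e^{i\eta t}AR(i\eta,A)^{n+1}x\,d\eta$ with a \emph{large} power $n\asymp \gamma c_0 tM(R)=\theta(R)/1\to\infty$, so that Stirling's formula together with positive increase (Lemma~\ref{lem:integral_estimate}) makes the high frequencies $|\eta|\geq R$ contribute $O(R)$, while the low frequencies are removed by convolving with a Fourier cut-off $\phi_R$ whose contribution is bounded by $O(R)$ via the primitive $F_x(t)=T(t)x-x$ (Lemma~\ref{lem:phiR_f_bound}), without ever invoking the resolvent for $|\eta|\leq R$. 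That mechanism---growing resolvent powers plus a frequency cut-off---is the essential idea missing from your argument, and merely regularising and then ``removing the regulariser'' does not substitute for it.
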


Note that the function $M_{\log}$ defined by \eqref{eq:Mlog_def}
differs from that used for 
semi-uniform stability in \cite{Batty2008} and \cite{Chill2016}.
In the case of semi-uniform stability,
$M_{\log}$ is non-decreasing by definition, but
this is not true in general for $M_{\log}$ defined by
\eqref{eq:Mlog_def}.
Therefore, we impose condition (C3).
In addition, condition (C1)
is not assumed in the studies on semi-uniform stability.
Condition (C1) will be used to apply
Lemma~\ref{lem:integral_estimate} and 
Corollary~\ref{coro:suff_id}.
It is also worth mentioning that a constant $\alpha$
satisfying \eqref{eq:positive_inc_cond} lies
in the interval $(0,1) $ under condition (C2), which can be immediately
seen from
\eqref{eq:M_poly_lower_bound}.

The remainder of this section is as follows.
In Section~\ref{sec:prop_Mlog}, 
we discuss the situations under which condition (C3) is satisfied.
In Section~\ref{sec:exp_case_Banach}, we prove Theorem~\ref{thm:non_exp} in the case 
when $(T(t))_{t \geq 0}$ is {\em exponentially stable}, i.e.,
there exist $C_T\geq 1$ and $\omega >0$ such that
$\|T(t)\| \leq C_T e^{-\omega t}$ for all $t \geq 0$.
In Section~\ref{sec:non_exp_case_Banach}, 
we give the proof of Theorem~\ref{thm:non_exp} 
in the general case.

\subsection{Sufficient conditions for $M_{\log}$
	to be non-decreasing}
\label{sec:prop_Mlog}
The following proposition presents
sufficient conditions for 
$M_{\log}$ to be non-decreasing.

\begin{proposition}
	\label{prop:M_log_si}
	Assume that 
	the function $M \colon \mathbb{R}_+ \to (0,\infty)$
	satisfies
	one of the following conditions:
	\begin{enumerate}
		\renewcommand{\labelenumi}{\rm{(\roman{enumi})}}
		\item
		There exist
		constants  $\gamma \in (0,1)$, $\delta\geq \gamma/(1-\gamma)$, and $s_1 	> 1$ such that 
		\[
		M(s) \leq s^{\gamma}\qquad \text{and} \qquad 
		\frac{M(\lambda s)}{M(s)} \geq \left(
		\frac{\log (\lambda s)}{\log s}
		\right)^{1+\delta}
		\]
		for all $\lambda \geq 1$ and $s \geq s_1$. 
		\item 
		There exist
		constants $\alpha,s_1 >0$ such that 
		\[
		M(s) \leq e^{-1/\alpha} s\qquad \text{and} \qquad 
		\frac{M(\lambda s)}{M(s)} \geq \lambda^\alpha
		\]
		for all $\lambda \geq 1$ and $s \geq s_1$. 
	\end{enumerate}
	Then 
	$M_{\log}$ defined by \eqref{eq:Mlog_def} 
	is strictly increasing on $[s_1,\infty)$.
\end{proposition}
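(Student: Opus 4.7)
The plan is to reduce the strict monotonicity of $M_{\log}$ to a single clean algebraic inequality and then verify that inequality under each hypothesis using Bernoulli-type estimates. Under either (i) or (ii), $M(s) < s$ on $[s_1,\infty)$, so $L(s):=\log(s/M(s))$ is strictly positive and $M_{\log}(s)=M(s)/L(s)$ is a well-defined positive quantity. For arbitrary $\lambda>1$ and $s\geq s_1$, set $r:=M(\lambda s)/M(s)\geq 1$. Since $\log(\lambda s/M(\lambda s))=\log\lambda+L(s)-\log r$, cross-multiplying the desired strict inequality $M_{\log}(\lambda s)>M_{\log}(s)$ (legal as both denominators are positive) shows that the whole proposition reduces to verifying
\[
(r-1)L(s)+\log r > \log\lambda.
\]
Since any two points $s_1\leq s<t$ in $[s_1,\infty)$ can be written as $s$ and $\lambda s$ with $\lambda=t/s>1$, this single inequality gives strict monotonicity.

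Under (ii), the bound $M(s)\leq e^{-1/\alpha}s$ immediately gives $L(s)\geq 1/\alpha$, and the second hypothesis gives $r\geq\lambda^\alpha$, so $\log r\geq\alpha\log\lambda$. Substituting into the reduced inequality, it suffices to prove $(\lambda^\alpha-1)/\alpha>(1-\alpha)\log\lambda$ for $\lambda>1$. If $\alpha\geq 1$ the right-hand side is nonpositive while the left-hand side is positive. If $\alpha\in(0,1)$, the function $g(\lambda):=(\lambda^\alpha-1)/\alpha-(1-\alpha)\log\lambda$ satisfies $g(1)=0$ and $g'(\lambda)=(\lambda^\alpha-(1-\alpha))/\lambda>0$ for $\lambda\geq 1$, which closes the case.

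Under (i), write $\ell:=\log s$ and $\mu:=(\log\lambda)/\ell>0$. From $M(s)\leq s^\gamma$ we get $L(s)\geq(1-\gamma)\ell$, while the hypothesis on $M(\lambda s)/M(s)$ rewrites as $r\geq(1+\mu)^{1+\delta}$. Dropping the nonnegative term $\log r$ from the reduced inequality, it therefore suffices to show
\[
(1-\gamma)\bigl((1+\mu)^{1+\delta}-1\bigr)\cdot\ell > \mu\ell,
\]
i.e.\ $(1-\gamma)((1+\mu)^{1+\delta}-1)>\mu$. Bernoulli's inequality with exponent $1+\delta>1$ and $\mu>0$ gives the strict bound $(1+\mu)^{1+\delta}-1>(1+\delta)\mu$, and the assumption $\delta\geq\gamma/(1-\gamma)$ is precisely $(1-\gamma)(1+\delta)\geq 1$, so the chain $(1-\gamma)((1+\mu)^{1+\delta}-1)>(1-\gamma)(1+\delta)\mu\geq\mu$ delivers the required strict inequality.

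I do not expect a real obstacle: the only nontrivial step is finding the right reformulation in terms of $r$ and $L(s)$, after which Bernoulli's inequality and an elementary one-variable calculus check carry (i) and (ii) respectively. The threshold $\delta=\gamma/(1-\gamma)$ is in fact sharp for this argument, since it is exactly what makes the constant $(1-\gamma)(1+\delta)$ in the Bernoulli step reach $1$.
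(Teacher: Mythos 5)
Your proof is correct and follows essentially the same route as the paper's: both reduce strict monotonicity of $M_{\log}$ to a ratio inequality via the relation $\log(\lambda s/M(\lambda s)) \le \log\lambda + \log(s/M(s))$ (you use the exact identity with the correction term $\log r$), and then verify it with Bernoulli's inequality in case (i) and the bounds $\theta(s)\ge 1/\alpha$, $\log\lambda/(\lambda^\alpha-1)<1/\alpha$ (in your version, the monotonicity of $g$) in case (ii). The only cosmetic difference is that you retain the term $\log r\ge\alpha\log\lambda$ in case (ii), whereas the paper discards it.
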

\begin{proof}
	In both cases (i) and (ii), we have
	$M(s) < s$
	for all $s \geq s_1$.
	Define $\theta\colon[s_1,\infty) \to (0,\infty) $ by 
	\[
	\theta(s) \coloneqq \log \left( \frac{s}{M(s)}\right).
	\]
	Then $M_{\log} = M/\theta$ is strictly
	increasing on the interval $[s_1,\infty)$ if and only if
	\begin{align*}
		\frac{M(\sigma)}{M(s)}
		>
		\frac{\theta(\sigma)}{\theta(s)}
	\end{align*}
	for all $\sigma, s \in [s_1,\infty)$ satisfying
	$\sigma > s$.
	Since $M$ is strictly increasing under 
	both conditions (i) and (ii), it follows that 
	for all $\lambda > 1$ and $s \geq s_1$,
	\begin{align*}
		\theta(\lambda s) 
		< \theta(s) + \log \lambda.
	\end{align*}
	Therefore, it suffices to show that
	\begin{equation}
		\label{eq:M_lower_bound}
		\frac{M(\lambda s)}{M(s)} \geq
		1+
		\frac{\log \lambda}{\theta(s)}
	\end{equation}
	for all $\lambda > 1$ and $s \geq s_1$.
	
	First, we assume that condition (i) is satisfied.
	Then
	\[
	\frac{M(\lambda s)}{M(s)} \geq \left(
	1+
	\frac{\log \lambda }{\log s}
	\right)^{1+\delta} \geq 
	1 + (1+\delta) \frac{\log \lambda }{\log s}
	\]
	for all $\lambda > 1$ and $s \geq s_1$. 
	By $\delta \geq \gamma/(1-\gamma)$,
	we have
	\begin{equation}
		\label{eq:1_delta_bound}
		1+\delta \geq \frac{1}{1-\gamma}.
	\end{equation}
	Since $M(s) \leq s^{\gamma}$ for all $s \geq s_1$, 
	it follows that
	\begin{equation}
		\label{eq:theta_lower_bound}
		\theta(s) \geq (1-\gamma) \log s
	\end{equation}
	for all $s \geq s_1$.
	Combining \eqref{eq:1_delta_bound} and \eqref{eq:theta_lower_bound}, we obtain
	\[
	(1+\delta) \frac{\log \lambda }{\log s} 
	\geq \frac{\log \lambda}{\theta(s)}
	\]
	for all $\lambda > 1$ and $s \geq s_1$.
	Thus, the desired inequality \eqref{eq:M_lower_bound}
	holds for all $\lambda > 1$ and $s \geq s_1$.
	
	Next, we assume that condition (ii) is satisfied.
	Then the desired inequality
	\eqref{eq:M_lower_bound} holds if
	\begin{align}
		\label{eq:theta_suff_estimate}
		\theta(s)
		\geq  \frac{\log \lambda}{\lambda^{\alpha}-1}.
	\end{align}
	For all $\lambda > 1$,
	\[
	\frac{\log \lambda}{  \lambda^{\alpha} - 1}
	<
	\lim_{\lambda \downarrow 1} \frac{\log \lambda}{  \lambda^{\alpha} - 1}= \frac{1}{\alpha }.
	\]
	Since $M(s) \leq e^{-1/\alpha} s
	$
	is equivalent to $\theta(s) \geq 1/\alpha$,
	we obtain \eqref{eq:theta_suff_estimate}
	for all $\lambda > 1$ and $s \geq s_1$.
\end{proof}

A similar argument
shows that $M_{\log}$ has positive increase if 
condition (ii) of Proposition~\ref{prop:M_log_si} holds.
Therefore,
by Lemma~\ref{lem:RSS22},
we can omit the constant $c$ in the estimate \eqref{eq:AT_bound_Banach_Thm} under condition (ii).
A typical example satisfying this condition is a polynomial. 
We now briefly investigate the case when $M$ is a polynomial.
\begin{example}
	Let $0< \alpha < 1$ and $C>0$.
	Define $M\colon \mathbb{R}_+ \to (0,\infty)$ by
	\[
	M(s) \coloneqq 
	\begin{cases}
		C, & 0 \leq s < 1, \\
		Cs^{\alpha}, & s \geq 1,
	\end{cases}
	\]
	which satisfies 
	condition (ii) of Proposition~\ref{prop:M_log_si}. 
	A routine calculation shows that 
	\[
	M_{\log}^{-1} (t)
	\asymp
	(t \log t)^{1/\alpha}\qquad \text{as $t \to \infty$}.
	\]
	This implies that 
	\[
	M_{\log}^{-1} 
	\left( \frac{1}{ct} \right) \asymp 
	\left(
	\frac{|\log t|}{t}
	\right)^{1/\alpha}\qquad \text{as $t \downarrow 0$}
	\]
	for all $c >0$.
\end{example}

\subsection{Case when $(T(t))_{t \geq 0}$
	is exponentially stable}
\label{sec:exp_case_Banach}
Here we prove Theorem~\ref{thm:non_exp}
under the assumption that
$(T(t))_{t \geq 0}$
is exponentially stable.
Throughout Section~\ref{sec:exp_case_Banach},
for $R>0$, define 
a continuously differentiable function
$\phi_R\colon \mathbb{R} \to \mathbb{R}$ by
\begin{equation}
	\label{eq:phi_R_def}
	\phi_R(t) \coloneqq R\phi(Rt),\quad 
	\text{where~}
	\phi(t) \coloneqq
	\begin{dcases}
		\frac{\cos(t) - \cos(2t)}{\pi t^2}, &t\neq 0, \\
		\frac{3}{2\pi}, &  t=0.
	\end{dcases}
\end{equation}
Then 
$\psi_R \coloneqq \mathcal{F}\phi_R$ is given by
\[
\psi_R(\eta) =
\begin{dcases}
	1,& |\eta| \leq R, \\
	2-\frac{|\eta|}{R}, & R \leq |\eta| \leq 2R, \\
	0 ,& |\eta| \geq 2R.
\end{dcases}
\]

Let $A$ be the generator of a $C_0$-semigroup  $(T(t))_{t \geq 0}$ on a 
Banach space $X$.
For $x \in D(A)$, we define $f_x \colon \mathbb{R} \to X$ by
\begin{equation}
	\label{eq:fx_def}
	f_x(t) \coloneqq
	\begin{cases}
		AT(t)x ,& t >0, \\
		0, & t \leq 0.
	\end{cases}
\end{equation}
We estimate $\|f_x(t)\|$ by using the decomposition
\[
f_x = \phi_R*  f_x + (f_x - \phi_R*f_x).
\]
The following lemma gives an estimate for $\phi_R*f_x $.
\begin{lemma}
	\label{lem:phiR_f_bound}
	Let $A$ be the generator of a bounded $C_0$-semigroup  $(T(t))_{t \geq 0}$ 
	on a 
	Banach space $X$, and let $C_T \coloneqq \sup_{t \geq 0} \|T(t)\|$. 
	Let $\phi$, $\phi_R$, and $f_x$ be as above.
	Then
	\[
	\|(\phi_R * f_x)(t)\| \leq 
	R(C_T +1) \|\phi'\|_{L^1(\mathbb{R})} \|x\| 
	\]
	for all $t \in \mathbb{R}$, $R>0$, and $x \in D(A)$.
\end{lemma}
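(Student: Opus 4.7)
The plan is to integrate by parts in the convolution, transferring the derivative implicit in $AT(s)x = \frac{d}{ds}T(s)x$ (which is valid for $x \in D(A)$) onto the smooth test function $\phi_R$, and then estimate the result using the boundedness of the semigroup together with the decay of $\phi_R$.

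Concretely, since $f_x$ vanishes on $(-\infty,0]$, I write
\[
(\phi_R * f_x)(t) = \int_0^\infty \phi_R(t-s)\, AT(s)x\, ds = \int_0^\infty \phi_R(t-s)\, \tfrac{d}{ds}T(s)x\, ds.
\]
For $x \in D(A)$, $s \mapsto T(s)x$ is continuously differentiable with derivative $AT(s)x$, and $\phi_R$ is continuously differentiable with the explicit decay $\phi(u) = O(1/u^2)$ as $|u|\to\infty$. Integration by parts then gives
\[
(\phi_R * f_x)(t) = \bigl[\phi_R(t-s)T(s)x\bigr]_{s=0}^{s=\infty} + \int_0^\infty \phi_R'(t-s)\, T(s)x\, ds.
\]
The boundary term at $s=\infty$ vanishes because $\phi_R(t-s) \to 0$ and $\|T(s)x\| \leq C_T\|x\|$, while at $s=0$ it yields $-\phi_R(t)x$.

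Taking norms and using the semigroup bound $\|T(s)\| \leq C_T$, I obtain
\[
\|(\phi_R * f_x)(t)\| \leq |\phi_R(t)|\,\|x\| + C_T\,\|x\|\,\|\phi_R'\|_{L^1(\mathbb{R})}.
\]
To handle the pointwise term I use that $\phi_R$ vanishes at $-\infty$ (again by the $1/u^2$ decay), so $\phi_R(t) = \int_{-\infty}^t \phi_R'(s)\,ds$, giving $|\phi_R(t)| \leq \|\phi_R'\|_{L^1(\mathbb{R})}$. Combining,
\[
\|(\phi_R * f_x)(t)\| \leq (C_T+1)\,\|\phi_R'\|_{L^1(\mathbb{R})}\,\|x\|.
\]
Finally, the scaling $\phi_R(u) = R\phi(Ru)$ gives $\phi_R'(u) = R^2\phi'(Ru)$, and a change of variable yields $\|\phi_R'\|_{L^1(\mathbb{R})} = R\|\phi'\|_{L^1(\mathbb{R})}$, which produces the stated bound.

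The only real subtlety is justifying the integration by parts together with the vanishing of the boundary term at infinity; this is where I must use the concrete definition of $\phi$ (specifically the $O(1/u^2)$ decay built into the formula $2(\cos u - \cos 2u)/u^2$) rather than only the qualitative fact that $\phi \in L^1$. Once that decay is invoked, the rest reduces to the scaling identity for $\|\phi_R'\|_{L^1}$ and the fundamental-theorem-of-calculus bound on $|\phi_R|$, which are routine.
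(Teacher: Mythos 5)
Your proof is correct and follows essentially the same route as the paper: integrate by parts to move the derivative from $T(\cdot)x$ onto $\phi_R$, bound the resulting integral by $C_T\|x\|\,\|\phi_R'\|_{L^1}$, and use the scaling $\|\phi_R'\|_{L^1}=R\|\phi'\|_{L^1}$. The only cosmetic difference is your choice of antiderivative: the paper uses $F_x(\tau)=T(\tau)x-x$ (so the boundary term at $0$ vanishes and the single bound $\|F_x\|\leq (C_T+1)\|x\|$ yields the constant), whereas you keep $T(s)x$ and absorb the extra boundary term $-\phi_R(t)x$ via the fundamental theorem of calculus, arriving at the identical constant.
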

\begin{proof}
	Let $x \in D(A)$ and
	define $F_x\colon \mathbb{R} \to X$ by
	\[
	F_x(t) \coloneqq \int^t_0 f_x(\tau) d\tau.
	\]
	Then
	\begin{equation}
		\label{eq:Fx_bound}
		\|F_x(t)\| \leq \|T(t)x - x\| \leq (C_T +1) \|x\|
	\end{equation}
	for all $t \geq 0$.
	For all $t \in \mathbb{R}$ and $R>0$,
	integration by parts gives
	\begin{align*}
		(\phi_R*f_x)(t)
		=
		R \int_0^{\infty} \phi(R(t-\tau)) f_x(\tau) d\tau 
		=R \int_0^{\infty} \phi'(Rt-\tau) F_x\left(
		\frac{\tau}{R}
		\right) d\tau.
	\end{align*}
	By \eqref{eq:Fx_bound}, we obtain
	\[
	\|(\phi_R * f_x)(t)\| \leq 
	R(C_T +1) \|\phi'\|_{L^1(\mathbb{R})} \|x\| 
	\]
	for all $t \in \mathbb{R}$ and $R>0$.
\end{proof}

\begin{proof}{Proof of Theorem~\ref{thm:non_exp}
		in the case when $(T(t))_{t \geq 0}$
		is exponentially stable} We assume that the $C_0$-semigroup $(T(t))_{t \geq 0}$
	is exponentially stable and that $b = 0$.
	Then the resolvent estimate \eqref{eq:resol_bound_Banach} holds for all $\eta \in \mathbb{R}$.
	
	The proof is divided into three steps.
	In Step~1, we decompose $f_x - \phi_R*f_x$ into three components and 
	obtain an upper bound for each.
	In Step~2, we combine these upper bounds in terms of $M_{\log}^{-1}$, which
	leads to an overall estimate for
	$\|f_x - \phi_R*f_x\|$. In Step~3, using
	this result and Lemma~\ref{lem:phiR_f_bound}, we derive an estimate for
	the rate of growth of $\|AT(t)\|$ as $t \downarrow 0$.
	
	Step~1.
	Let $x \in D(A^3)$.
	Then,
	for all $t >0$ and $n \in \mathbb{N}$,
	\[
	f_x(t) = \frac{n!}{2\pi t^{n}}
	\int^{\infty}_{-\infty} e^{i\eta t} AR(i\eta,A)^{n+1}x d\eta;
	\]
	see, e.g., \cite[Corollary~III.5.16]{Engel2000}.
	On the other hand, since
	\[
	(\phi_R * f_x)(t) = 
	\frac{1}{2\pi}\int^{2R}_{-2R} e^{i \eta t} \psi_R(\eta)AR(i \eta, A) x d\eta
	\]
	for all $t\in \mathbb{R}$, integrating by parts yields
	\begin{align*}
		&(\phi_R * f_x)(t) \\
		&~=
		\frac{n!}{2\pi t^n}\int^{2R}_{-2R} e^{i \eta t} \psi_R(\eta)AR(i \eta, A)^{n+1} x d\eta \\
		&\quad + 
		\frac{n!}{2\pi i Rt^n}
		\left(
		\int_{R}^{2R} 
		e^{i \eta t}AR(i \eta, A)^{n} x
		d\eta -
		\int_{-2R}^{-R} e^{i \eta t}AR(i \eta, A)^{n} x
		d\eta 
		\right)\\
		&\quad - 
		\frac{1}{2\pi Rt^2}
		\sum_{k=0}^{n-2} \frac{(k+1)!}{t^{k}}
		\Big(
		e^{2iRt} AR(2iR , A)^{k+1} x +
		e^{-2iRt} AR(-2i R, A)^{k+1} x
		\\
		&\hspace{115pt}
		-
		e^{iRt} AR(iR , A)^{k+1} x 
		-
		e^{-iRt} AR(-i R, A)^{k+1} x
		\Big)
	\end{align*}
	for all $t,R >0$ and $n \in \mathbb{N}$.
	Hence,
	\[
	f_x(t) - (\phi_R*f_x)(t) =J_1 + J_2 + J_3
	\]
	for all $t,R >0$ and $n \in \mathbb{N}$, where
	\begin{align*}
		J_1 &\coloneqq \frac{n!}{2\pi t^{n}}
		\int^{\infty}_{-\infty} e^{i\eta t} (1 - \psi_R(\eta))
		AR(i\eta,A)^{n+1}x d\eta , \\
		J_2 &\coloneqq -
		\frac{n!}{2\pi i Rt^n}
		\left(
		\int_{R}^{2R} 
		e^{i \eta t}AR(i \eta, A)^{n} x
		d\eta -
		\int_{-2R}^{-R} e^{i \eta t}AR(i \eta, A)^{n} x
		d\eta 
		\right) , \\
		J_3 &\coloneqq
		\frac{1}{2\pi R t^2}
		\sum_{k=0}^{n-2} \frac{(k+1)!}{t^{k}}
		\Big(
		e^{2iRt} AR(2iR , A)^{k+1} x +
		e^{-2iRt} AR(-2i R, A)^{k+1} x
		\\
		&\hspace{105pt}
		-
		e^{iRt} AR(iR , A)^{k+1} x 
		-
		e^{-iRt} AR(-i R, A)^{k+1} x
		\Big).
	\end{align*}

	For all $\eta \in \mathbb{R}$,
	\[
	AR(i\eta ,A) = i\eta R(i\eta ,A) - 1.
	\]
	By assumption \eqref{eq:resol_bound_Banach},
	\[
	\|AR(i\eta ,A)\| \leq \frac{|\eta|}{M(|\eta|)} + 1
	\]
	for all $\eta \in \mathbb{R}$.
	Since
	$M(s) < s$ for all $s \geq s_1$, we have
	\[
	\|AR(i\eta ,A)\| \leq \frac{2|\eta|}{M(|\eta|)}
	\]
	for all $\eta \in \mathbb{R}$ with $|\eta| \geq 
	s_1$.
	This gives
	\begin{equation}
		\label{eq:AR_n_bound_Banach}
		\|AR(i\eta ,A)^n\| \leq \frac{2|\eta|}{M(|\eta|)^n}
	\end{equation}
	for all $n \in \mathbb{N}$ and 
	$\eta \in \mathbb{R}$ with $|\eta| \geq 
	s_1$.
	
	Let $\alpha >0$, $c_0 \in (0,1]$, and $s_0 \geq 0$ 
	satisfy the inequality \eqref{eq:positive_inc_cond}.
	Define $n_{\alpha} \coloneqq 
	\min\{n \in \mathbb{N}:n > 2/\alpha\}$ and $\beta \coloneqq
	\alpha n _{\alpha} - 2 >0$.
	Let $R \geq \max\{s_0,\, s_1\} \eqqcolon s_2$ 
	and let $n \in \mathbb{N}$ with $n \geq n_{\alpha}$. 
	By
	the estimate~\eqref{eq:AR_n_bound_Banach} and 
	Lemma~\ref{lem:integral_estimate}, we have
	\begin{align}
		\|J_1\|&\leq 
		\frac{2n! \|x\|}{\pi t^{n}}
		\int^{\infty}_{R}  \frac{s}{M(s)^{n+1}}ds \leq \frac{2\|x\|}{c_0 \beta  \pi  }
		\frac{n!R^2}{c_0^{n}t^nM(R)^{n+1}} 
		\label{eq:J1_bound}
	\end{align}
	for all $t > 0$.
	Similarly,
	\begin{align*}
		\|J_2\|&\leq 
		\frac{2n! \|x\|}{\pi R t^n}
		\int_{R}^{\infty} \frac{s}{M(s)^n}
		ds \leq 
		\frac{2\|x\|}{\beta \pi}
		\frac{n!R^2}{c_0^nt^nRM(R)^n}
	\end{align*}
	for all $t > 0$.
	Since 
	$M(s) < s$ for all $s \geq s_1$,
	it follows that 
	\begin{equation}
		\label{eq:J2_bound}
		\|J_2\|\leq 
		\frac{2\|x\|}{\beta \pi}
		\frac{n!R^2}{c_0^nt^nM(R)^{n+1}}
	\end{equation}
	for all $t > 0$.
	The estimate \eqref{eq:AR_n_bound_Banach} also yields
	\begin{align}
		\|J_3\| & \leq 
		\frac{2n \|x\|}{ \pi R t^{2}}
		\sum_{k=0}^{n-2}
		\frac{k!}{t^k}
		\left(
		\frac{2R}{M(2R)^{k+1}} + \frac{R}{M(R)^{k+1}} 
		\right) \notag \\
		&\leq
		\frac{6n \|x\|}{\pi t^{2}M(R)}
		\sum_{k=0}^{n-2}
		\frac{k!}{t^kM(R)^{k}}
		\label{eq:J3_bound}
	\end{align}
	for all $t > 0$.
	
	Step~2.
	Let $0 < \gamma  < 1$.
	By the estimates \eqref{eq:J1_bound}--\eqref{eq:J3_bound},
	here 
	we show that 
	there exist $C_1,t_1>0$ such that 
	for all $t \in (0, t_1]$ and $x \in D(A^3)$,
	\begin{equation}
		\label{eq:fx_phi_fx}
		\|f_x(t) - (\phi_R*f_x)(t)\| \leq C_1 \|x\| M_{\log}^{-1} \left(
		\frac{1}{\gamma c_0t}
		\right).
	\end{equation}
	To this end, we set
	\[
	t_2 \coloneqq \frac{1}{\gamma c_0M_{\log}(s_2)}.
	\]
	For $t \in (0,t_2)$, define 
	\begin{equation}
		\label{eq:R_def_Banach}
		R \coloneqq M_{\log}^{-1} \left(
		\frac{1}{\gamma c_0t}
		\right).
	\end{equation}
	Then $R \geq s_2$.
	The property \eqref{eq:li_prop2}
	of left-inverses yields
	$M_{\log}(R) = 1/(\gamma c_0t)$. Hence,
	\begin{equation}
		\label{eq:qctM}
		\gamma c_0t M(R) =
		\frac{M(R)}{M_{\log}(R)} = 
		\theta(R)
	\end{equation}
	for all $t \in (0, t_2)$,
	where $\theta(R) \coloneqq \log(R/M(R))$.
	Since $\theta(s) \to \infty$ and 
	$M_{\log}^{-1}(s) \to \infty$ as $s \to \infty$,
	there exists $t_0 \in (0, t_2)$ such that 
	$\gamma c_0tM(R) \geq n_{\alpha}$ for all $t \in (0, t_0]$.
	
	Let $0 < t \leq t_0$ and
	define 
	\begin{equation}
		\label{eq:n_def_Banach}
		n \coloneqq \max \{ m \in \mathbb{N}: m \leq \gamma c_0tM(R) \} \geq n_{\alpha},
	\end{equation}
	where $R$ is as in \eqref{eq:R_def_Banach}.
	By Stirling's formula,
	there exists $C_0>0$ such that 
	\[
	m! \leq C_0\left ( \frac{m}{e\gamma} \right)^m
	\]
	for all $m \in \mathbb{N}$.
	Then we have the following 
	estimate on the right-hand terms of \eqref{eq:J1_bound}
	and \eqref{eq:J2_bound}:
	\begin{equation}
		\label{eq:nR_tMR_bound}
		\frac{n!R^2}{c_0^{n}t^{n}M(R)^{n+1}}
		\leq \frac{C_0R^2}{M(R)}
		\left(
		\frac{n}{e\gamma c_0 tM(R)}
		\right)^{n} 
		\leq \frac{eC_0R^2}{M(R)}e^{-\gamma c_0t M(R)}.
	\end{equation}
	From \eqref{eq:qctM}, it follows that
	\begin{equation}
		\label{eq:expTMR}
		e^{-\gamma c_0t M(R)} = e^{-\theta(R)} =
		\frac{M(R)}{R}.
	\end{equation}
	By \eqref{eq:nR_tMR_bound} and \eqref{eq:expTMR},
	\begin{equation}
		\label{eq:J1_right}
		\frac{n!R^2}{c_0^{n}t^{n}M(R)^{n+1}}
		\leq eC_0R
		= eC_0  M_{\log}^{-1} \left(
		\frac{1}{\gamma c_0t}
		\right).
	\end{equation}
	Using $n \leq \gamma c_0tM(R)$, we also have 
	the following estimate for
	the right-hand term of \eqref{eq:J3_bound}:
	\begin{align}
		\label{eq:tM_sum_bound}
		\frac{n}{t^{2}M(R)}
		\sum_{k=0}^{n-2}
		\frac{k!}{t^kM(R)^{k}}
		&\leq
		\frac{\gamma c_0}{t}
		\sum_{k=0}^{n-2}
		\left(\frac{k}{tM(R)} \right)^k \notag \\
		&\leq 
		\frac{\gamma c_0}{t}
		\sum_{k=0}^{n-2}
		(\gamma c_0)^k 
		\leq 
		\frac{\gamma c_0}{(1-\gamma c_0)t}.
	\end{align}
	Since $M(s) = o(s)$ and $M_{\log}(s) = O(M(s))$
	as $s \to \infty$, Lemma~\ref{lem:left_inv_small_o} shows that there exists $t_1 \in (0,t_0]$
	such that 
	\begin{equation}
		\label{eq:1_tau_Mlog_bound}
		\frac{1}{\tau} \leq M_{\log}^{-1} \left(
		\frac{1}{\gamma c_0\tau}
		\right)
	\end{equation}
	for all $\tau \in (0, t_1]$.
	If $t \leq t_1$, then the estimates \eqref{eq:tM_sum_bound} and 
	\eqref{eq:1_tau_Mlog_bound} give
	\begin{equation}
		\label{eq:J3_right}
		\frac{n}{t^{2}M(R)}
		\sum_{k=0}^{n-2}
		\frac{k!}{t^kM(R)^{k}} \leq \frac{\gamma c_0}{1-\gamma c_0}M_{\log}^{-1} \left(
		\frac{1}{\gamma c_0t}
		\right).
	\end{equation}
	Applying \eqref{eq:J1_right} and \eqref{eq:J3_right} to \eqref{eq:J1_bound}--\eqref{eq:J3_bound}, we conclude that
	there exists $C_1>0$ such that the desired estimate~\eqref{eq:fx_phi_fx} holds
	for all $t \in (0, t_1]$ and $x \in D(A^3)$.
	
	Step~3.
	By  Lemma~\ref{lem:phiR_f_bound},
	there exists $C_2>0$ such that 
	for all $t \in (0, t_1]$ and $x \in D(A)$,
	\[
	\|(\phi_R*f_x)(t)\| \leq C_2 \|x\|M_{\log}^{-1} \left(
	\frac{1}{\gamma c_0t}
	\right),
	\]
	where $R$ is as in \eqref{eq:R_def_Banach}.
	This and \eqref{eq:fx_phi_fx} establish that
	\[
	\|AT(t)x\| = \|f_x(t)\| \leq (C_1+C_2)\|x\|M_{\log}^{-1} \left(
	\frac{1}{\gamma c_0t}
	\right)
	\]
	for all $t \in (0, t_1]$ and $x \in D(A^3)$.
	By \eqref{eq:M_poly_lower_bound}
	and \eqref{eq:resol_bound_Banach}, we have $\|R( i \eta,A)\| = O(|\eta|^{-\alpha})$
	as $|\eta| \to \infty$. Hence,
	Corollary~\ref{coro:suff_id} shows that
	$(T(t))_{t \geq 0}$ is immediately differentiable.
	It follows that 
	$AT(t) \in \mathcal{L}(X)$ for all $t >0$.
	By the density of $D(A^3)$ in $X$, we obtain
	\[
	\|AT(t)\| \leq (C_1+C_2)M_{\log}^{-1} \left(
	\frac{1}{\gamma c_0t}
	\right)
	\]
	for all $t \in (0, t_1]$.
\end{proof}

\subsection{Case when $(T(t))_{t \geq 0}$
	may not be exponentially stable}
\label{sec:non_exp_case_Banach}
Finally, we prove Theorem~\ref{thm:non_exp} 
in the case when $(T(t))_{t \geq 0}$
may not be exponentially stable.

\begin{proof}{Proof of Theorem~\ref{thm:non_exp}}
	Let $C_T \geq 1$ and $\omega >0$ satisfy
	$\|T(t)\| \leq C_T  e^{\omega t}$ for all $t \geq 0$.
	Define $B \coloneqq A - 2\omega$.
	Then $B$ generates an 
	exponentially stable $C_0$-semigroup $(S(t))_{t\geq 0}$.
	The resolvent equation gives
	\begin{align*}
		\|R(i\eta, B)\| 
		&\leq 
		\|R(i \eta,A)\| + 2\omega \|R(i\eta,A)\|\, \|R(2\omega+i\eta,A)\|.
	\end{align*}
	By the Hille-Yosida theorem,
	\[
	\|R(2\omega+i\eta,A)\| \leq \frac{C_T }{\omega}
	\]
	for all $\eta \in \mathbb{R}$.
	Hence, by assumption~\eqref{eq:resol_bound_Banach},
	\[
	\|R(i\eta, B)\|  \leq  (1+2C_T ) \|R(i\eta,A)\| \leq \frac{1+2C_T }{ M(|\eta|)}
	\]
	for all $\eta \in \mathbb{R}$ with $|\eta| \geq b$.
	In addition, we have 
	\[
	\sup_{-b < \eta < b} \|R(i\eta,B)\| < \infty.
	\]
	By these estimates,
	there exists $\delta >1$ such that 
	\[
	\|R(i\eta,B)\| \leq \frac{\delta}{ M(|\eta|)} 
	\]
	for all $\eta \in \mathbb{R}$. 
	
	Define
	$N \colon \mathbb{R}_+ \to (0,\infty)$ by $N(s) \coloneqq M(s)/\delta$ and 
	$N_{\log} \colon [s_1,\infty) \to (0,\infty)$ by
	\[
	N_{\log}(s ) \coloneqq \frac{N(s)}{\log(s/N(s))}.
	\]
	Since $M_{\log}$ is non-decreasing on $[s_1,\infty)$, 
	it follows that
	$N_{\log}$ is also non-decreasing on $[s_1,\infty)$.
	Indeed,
	we have
	\[
	\frac{1}{\delta N_{\log}(s) } =
	\frac{\log(s/M(s))}{M(s)} + \frac{\log \delta }{M(s)} =
	\frac{1}{M_{\log}(s) } + \frac{\log \delta }{M(s)}.
	\]
	Therefore, $1/N_{\log}$ is non-increasing
	on 
	$[s_1,\infty)$.
	
	By the argument in 
	Section~\ref{sec:exp_case_Banach},
	$(S(t))_{t \geq 0}$ is immediately differentiable, and 
	there exists $c \in (0,1)$
	such that 
	\[
	\|BS(t)\| = O \left(N_{\log}^{-1} \left( \frac{1}{ct} \right) \right)
	\qquad \text{as $t\downarrow 0$}.
	\]
	Then $(T(t))_{t \geq 0}$ is also immediately differentiable, and
	\begin{equation}
		\label{eq:AT_bound_Banach}
		\|AT(t)\| \leq 
		2\omega \|T(t)\| + e^{2\omega t} \|BS(t)\| = 
		O \left(N_{\log}^{-1} \left( \frac{1}{ct} \right) \right)
		\qquad \text{as $t\downarrow 0$}.
	\end{equation}
	It remains to replace $N_{\log}^{-1}$ by $M_{\log}^{-1}$
	in the estimate \eqref{eq:AT_bound_Banach}.
	Since $N(s) = o(s)$ as $s \to \infty$,
	there exists $s_{\delta} \geq s_1$ such that
	$\log(s/N(s)) \geq 2 \log \delta$
	for all $s \geq s_{\delta}$.
	Hence,
	\begin{align*}
		M_{\log}(s) 
		= \frac{\delta N(s)}{\log(s/N(s)) - \log \delta}
		\leq \frac{2\delta N(s)}{\log(s/N(s))}=
		2\delta N_{\log}(s)
	\end{align*}
	for all $s \geq s_{\delta}$.
	For all $t >N_{\log}(s_{\delta})$, we also have
	\begin{align*}
		N_{\log}^{-1}(t) &=
		\inf\{
		s \geq s_{\delta}:
		N_{\log }(s) \geq t
		\} \\
		&\leq 
		\inf\{
		s \geq s_{\delta}:
		M_{\log }(s) \geq 2\delta t
		\} =
		M_{\log}^{-1}(2\delta t).
	\end{align*}
	From this and the estimate \eqref{eq:AT_bound_Banach},
	we conclude that 
	\[
	\|AT(t)\| = O\left(M_{\log}^{-1}\left(\frac{1}{(c/2\delta ) t}
	\right) \right) \qquad \text{as $t \downarrow 0$}. 
	\]
\end{proof}

\section{Lower bound for growth rates}
\label{sec:lower_bound}
In this section, we show how certain 
semigroup estimates can be transferred to
resolvent estimates. 
We extend the argument in the proof of
\cite[Theorem~2.1]{Crandall1969} to a more general setting.
By this result, we also derive
a lower bound for the growth rate of 
$C_0$-semigroups.
\begin{theorem}
	\label{thm:lower_bound}
	Let $A$ be the generator of an
	immediately differentiable $C_0$-semigroup
	$(T(t))_{t \geq 0}$ on a Banach space $X$.
	Then the following statements hold:
	\begin{enumerate}
		\renewcommand{\labelenumi}{\rm{\alph{enumi})}}
		\item Let $K\colon \mathbb{R}_+\to \mathbb{R}_+$
		be a non-decreasing continuous
		function such that $t = O(K(t))$ as $t\to \infty$.
		If 
		\begin{equation}
			\label{eq:AT_estimate_K}
			\|AT(t)\| = O\left(K \left( \frac{1}{t} \right)\right)\qquad \text{as $t \downarrow 0$},
		\end{equation}
		then
		there exists a constant $c \in (0,1)$ such that
		\begin{equation}
			\label{eq:resol_estimate_K_inv}
			\|R(i\eta,A)\| = O 
			\left(
			\frac{1}{K^{-1}(c|\eta|)}
			\right)\qquad \text{as $|\eta| \to \infty$}.
		\end{equation}
		\item 
		Let $b \geq 0$ satisfy $\sigma(A) \cap i \mathbb{R} \subset (-ib,ib)$.
		Define $M\colon [b,\infty) \to (0,\infty) $ 
		and 
		$K\colon [1,\infty) \to \mathbb{R}_+ $ 
		by
		\begin{align*}
			M(s) &\coloneqq \frac{1}{\sup_{|\eta| \geq s} \|R(i\eta,A)\|},\\
			K(t) &\coloneqq \sup_{1\leq \tau \leq t }
			\left\|AT \left( \frac{1}{\tau} \right) \right\|.
		\end{align*}
		Assume that $M(s) \to \infty$ as $s \to \infty$ and that
		$
		t = O
		\left( K(t)
		\right)
		$
		as $t \to \infty$.
		Then 
		there exists a constant $c >0$ such that 
		\begin{equation}
			\label{eq:M_inv_K}
			M^{-1}\left(
			\frac{1}{ct}
			\right) = O\left(K \left( \frac{1}{t} \right )\right)\qquad \text{as $t \downarrow 0$}.
		\end{equation}
	\end{enumerate}
\end{theorem}

\begin{proof}
	a)
	Let $C_T \coloneqq \sup_{0\leq t \leq 1} \|T(t)\|$ and
	let $b > 0$ satisfy $\sigma(A) \cap i \mathbb{R} \subset (-ib,ib)$.
	Using \eqref{eq:resol_for_lower_bound} with $z = i \eta$, we obtain
	\[
	\|R(i\eta,A)\|
	\leq 
	\frac{1}{|\eta|}
	\left(
	\|AT(t)\|\, \|R(i\eta,A)\| + C_T
	\right) + C_T t
	\]
	for all $t \in (0,1]$  and $\eta \in \mathbb{R}$ with
	$|\eta| \geq  b$.
	By assumption~\eqref{eq:AT_estimate_K},
	there exist $C_0>0$ and $t_0 \in (0,1]$ such that 
	$\|AT(t)\| \leq C_0 K(1/t)$ for all $t \in (0,t_0]$.
	This implies that 
	\begin{align*}
		\left(
		1 - \frac{C_0K(1/t)}{|\eta|}
		\right)
		\|R(i\eta,A)\| 
		&\leq 
		\left(
		1 - \frac{\|AT(t)\|}{|\eta|}
		\right)
		\|R(i\eta,A)\|  \\
		&\leq C_T
		\left(
		\frac{1}{|\eta|} + t
		\right)
	\end{align*}
	for all $t \in (0,t_0]$  and $\eta \in \mathbb{R}$ with
	$|\eta| \geq  b$.
	
	Let $c \in (0,1/C_0)$ and $\eta \in \mathbb{R}$ 
	with $|\eta| \geq \max\{ b,\, (K(1/t_0) + 1)/c \}$. 
	Set
	\[
	t_{\eta} \coloneqq \frac{1}{K^{-1}(c|\eta|)}.
	\] 
	Then $t_{\eta} \leq t_0$, and the property \eqref{eq:li_prop2} 
	of left-inverses yields
	\[
	K\left( \frac{1}{t_{\eta}} \right) = K (K^{-1}(c|\eta|)) = c|\eta|.
	\]
	Hence,
	\[
	\left(1- cC_0 \right)\|R(i\eta,A)\| \leq C_T
	\left(
	\frac{1}{|\eta|} + \frac{1}{K^{-1}(c|\eta|)}
	\right).
	\]
	Since $t = O(K(t))$ as $t \to \infty$,
	there exist 
	$C>0$ and $t_1 \geq 0$ such that $t \leq CK(t)$ 
	for all $t \geq t_1$.
	Therefore,
	if $|\eta| \geq t_1/(cC) \eqqcolon s_1$, then
	\begin{align*}
		K^{-1}(c|\eta|)
		\leq \inf \{ t \geq t_1 : K(t) \geq c|\eta| \} 
		\leq 
		cC|\eta|. 
	\end{align*}
	This yields
	\[
	\frac{1}{|\eta|} \leq \frac{cC}{K^{-1}(c|\eta|)}
	\]
	whenever $|\eta| \geq s_1$.
	Thus,
	the desired estimate \eqref{eq:resol_estimate_K_inv}
	holds.
	
	b)
	Define a piecewise linear function
	$L\colon [1,\infty) \to \mathbb{R}_+$ by
	\[
	L(t) \coloneqq \frac{K(2^{n+1})}{2^n}(2^{n+1}-t) +
	\frac{K(2^{n+2})}{2^n}(t-2^n)
	\]
	for 
	$2^n\leq t < 2^{n+1}$ and $n \in \mathbb{N}_0$.
	Then $L$ is non-decreasing and satisfies
	\[
	K(t) \leq L(t) \leq K(4t)
	\]
	for all $t \geq 1$.
	By statement a),
	there exist constants $c_0 \in (0,1)$, 
	$C>0$, and $s_0 \geq \max\{ b, \, L(1)/c_0\}$ such that
	for all $\eta \in \mathbb{R}$ with $|\eta| \geq s_0$,
	\[
	\|R(i \eta,A)\| \leq  \frac{C}{L^{-1}(c_0|\eta|)}.
	\]
	Therefore, for all $s \geq s_0$, 
	\[
	\frac{1}{M(s)} = 
	\sup_{|\eta|\geq s}\|R(i\eta,A)\|  \leq 
	\sup_{|\eta|\geq s}	\frac{C}{L^{-1}(c_0|\eta|)}.
	\]
	Since $L^{-1}$ is non-decreasing, it follows that
	\[
	M(s) \geq  \frac{L^{-1}(c_0s) }{C}  
	\]
	for all $s \geq s_0$.
	This estimate implies that 
	for all $t > L^{-1}(c_0s_0)/C$ and $\varepsilon>0$,
	\begin{align*}
		M^{-1}(t) 
		\leq 
		\inf\{ 
		s \geq s_0 : L^{-1}(c_0s) \geq Ct
		\} 
		\leq \frac{L(Ct)}{c_0} + \varepsilon.
	\end{align*}
	Since $\varepsilon>0$ is arbitrary,
	we have
	\[
	M^{-1}(t) \leq \frac{L(Ct)}{c_0} \leq \frac{K(4Ct)}{c_0}
	\]
	for all $t > L^{-1}(c_0s_0)/C$.
	From this,
	the desired estimate \eqref{eq:M_inv_K} follows.
\end{proof}

We conclude this section by making a remark on
the conditions required in statement~a) of Theorem~\ref{thm:lower_bound}.
\begin{remark}
	Let $C_T\geq 1$ and $\omega \in \mathbb{R}$ satisfy
	$\|T(t)\| \leq C_T e^{\omega t}$ for all $t \geq 0$.
	Suppose that
	\begin{equation}
		\label{eq:liminf_K0}
		\liminf_{|\eta| \to \infty}\, |\eta|\, \|R(i\eta,A)\| > C_T.
	\end{equation}
	Then the piecewise linear 
	function $L$ defined as in the proof of statement~b)
	of Theorem~\ref{thm:lower_bound}
	satisfies
	the conditions in statement~a), namely,
	$t = O(L(t))$ as $t\to \infty$ and \eqref{eq:AT_estimate_K}
	with $K = L$.
	To see this, it suffices to show that 
	$1/t = O(\|AT(t)\|)$ as $t \downarrow 0$.
	Let $b \geq 0$ satisfy $\sigma(A) \cap i \mathbb{R} \subset (-ib,ib)$.
	If \eqref{eq:liminf_K0} holds, then
	there exist $s_0 > b$ and $\delta >0$ such that 
	\begin{equation}
		\label{eq:resol_lower_bound_remark}
		\|R(i \eta,A)\| \geq \frac{(1+2\delta)C_T}{|\eta|}
	\end{equation}
	for all $\eta \in \mathbb{R}$ satisfying $|\eta| \geq s_0$.
	There exists $t_0>0$ such that $\sup_{0\leq t \leq t_0} \|T(t)\|
	\leq (1+\delta) C_T$.
	By \eqref{eq:resol_for_lower_bound} with $z = i \eta$ and 
	\eqref{eq:resol_lower_bound_remark}, we have
	\begin{align*}
		\|R(i\eta,A)\| &\leq 
		\frac{1}{|\eta|}
		\left(
		\|AT(t)\| \, \|R(i\eta,A)\| + (1+\delta) C_T
		\right) + (1+\delta) C_Tt \\
		&\leq 
		\frac{\|AT(t)\| \, \|R(i\eta,A)\| }{|\eta|} + 
		\frac{1+\delta}{1+2\delta} \|R(i\eta,A)\| + 
		(1+\delta) C_Tt
	\end{align*}
	for all $t \in (0,t_0]$ and $\eta \in \mathbb{R}$ satisfying $|\eta| \geq s_0$.
	Hence,
	\begin{equation}
		\label{eq:AT_resol_remark}
		\frac{\delta}{1+2\delta} \leq 
		\frac{\|AT(t)\|  }{|\eta|} +
		\frac{(1+\delta) C_T t }{\|R(i\eta,A)\| }
	\end{equation}
	for all $t \in (0,t_0]$ and $\eta \in \mathbb{R}$ satisfying $|\eta| \geq s_0$.
	Take 
	$
	c \in \left( 
	0, \delta/(1+\delta)
	\right)
	$
	and $t \in (0, \min \{t_0,\, c/s_0  \}]$.
	Set
	\[
	\eta \coloneqq \frac{c}{t} \geq s_0,
	\] 
	By
	\eqref{eq:resol_lower_bound_remark} and 
	\eqref{eq:AT_resol_remark}, we obtain
	\[
	\frac{\delta}{1+2\delta}  \leq 
	\frac{t\|AT(t)\|}{c} +
	\frac{c(1+\delta) C_T}{\eta \|R(i\eta,A)\|} \leq 
	\frac{t\|AT(t)\|}{c} +\frac{c(1+\delta)}{1+2\delta}.
	\]
	Then
	\[
	\frac{\delta- c(1+\delta)}{1+2\delta} \leq \frac{t\|AT(t)\|}{c} .
	\]
	Thus, $1/t = O(\|AT(t)\|)$ as $t \downarrow 0$.
\end{remark}

\section{Growth of Hilbert space semigroups}
\label{sec:Hilbert}
For a Hilbert space semigroup
$(T(t))_{t \geq 0}$ with generator $A$, we transfer the rate of decay of $\|(i \eta - A)^{-1}\|$ as $|\eta| \to \infty$
to
the rate of growth of $\|AT(t)\|$ as $t \downarrow 0$. 
By employing 
functions of positive increase,
we establish an upper estimate for growth rates
on scales finer 
than the polynomial scales considered in \cite[Theorem~2.3]{Wakaiki2025_JFA}.
The proof is inspired by the techniques used for  \cite[Theorem~3.2]{Rozendaal2019}.
\begin{theorem}
	\label{thm:Hilbert_case}
	Let $A$ be the generator of a $C_0$-semigroup
	$(T(t))_{t \geq 0}$ on a Hilbert space 
	$X$ such that $\sigma(A) \cap i \mathbb{R}\subset (-i b, i b)$ for some $b \geq 0$.
	Suppose that $M \colon \mathbb{R}_+ \to (0,\infty)$
	is a non-decreasing continuous  function of positive increase such that 
	\begin{equation}
		\label{eq:resolvent_decay_Hilbert}
		\|R( i \eta,A)\| \leq \frac{1}{M(|\eta|)}
	\end{equation}
	for all $\eta \in \mathbb{R}$
	satisfying $|\eta| \geq b$.
	Then $(T(t))_{t \geq 0}$ is 
	immediately differentiable and satisfies
	\begin{equation}
		\label{eq:semigroup_growth_Hilbert}
		\|AT(t)\| = O \left(M^{-1} \left( \frac{1}{t} \right) \right)
		\qquad \text{as $t \downarrow 0$}.
	\end{equation}
\end{theorem}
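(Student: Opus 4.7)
The plan is to adapt the proof of Theorem~\ref{thm:non_exp} to the Hilbert setting, using Plancherel's theorem to eliminate the logarithmic loss that appeared in the Banach case. As in Section~\ref{sec:non_exp_case_Banach}, a shift argument reduces matters to exponentially stable $(T(t))_{t \geq 0}$, and Corollary~\ref{coro:suff_id} combined with the polynomial lower bound $M(s) \geq c_1 s^\alpha$ from \eqref{eq:M_poly_lower_bound} ensures immediate differentiability. With $f_x(t) = AT(t)x$ as in \eqref{eq:fx_def} and $g_R := f_x - \phi_R * f_x$, I would choose $R := M^{-1}(1/t)$, so that $M(R) = 1/t$. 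Lemma~\ref{lem:phiR_f_bound} then already delivers the matching bound $\|(\phi_R * f_x)(t)\| \leq CR\|x\| = CM^{-1}(1/t)\|x\|$ on the smooth part, so the whole task reduces to establishing a pointwise estimate $\|g_R(t)\| \leq CM^{-1}(1/t)\|x\|$ on the tail.

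The Hilbert-space improvement comes from replacing the Banach analysis of Section~\ref{sec:exp_case_Banach}, where the series $\sum_{k} k!/(tM(R))^k$ had to be summed by Stirling at $k \asymp tM(R) = \theta(R)$ (producing the $\log$ factor in $M_{\log}$), by a Plancherel-type estimate that absorbs the entire tail into one weighted integral. Concretely, for $x \in D(A^{k+1})$ with $k$ chosen large in terms of the positive-increase exponent $\alpha$, iterated integration by parts in the identity
\[
g_R(t) = \frac{1}{2\pi} \int_{\mathbb{R}} (1-\psi_R(\eta))\, e^{i\eta t} AR(i\eta,A) x \, d\eta
\]
expresses $t^k g_R(t)/k!$ as the integral of $(1-\psi_R(\eta)) e^{i\eta t} AR(i\eta,A)^{k+1} x /(2\pi)$ plus finitely many boundary contributions at $\eta = \pm R, \pm 2R$ of the same shape as $J_3$ in the Banach proof. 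Applying Plancherel to the bulk integral, whose Fourier transform is supported in $\{|\eta| \geq R\}$, and combining $\|AR(i\eta,A)^{k+1}\| \leq 2|\eta|/M(|\eta|)^{k+1}$ with Lemma~\ref{lem:integral_estimate} (and positive increase) controls the relevant weighted $L^2$-integrals. The one-dimensional Sobolev-type inequality $\|u\|_{L^\infty(\mathbb{R};X)}^2 \leq 2\|u\|_{L^2(\mathbb{R};X)} \|u'\|_{L^2(\mathbb{R};X)}$ then turns the $L^2$ information into a pointwise bound, and dividing by $t^k$ while substituting $M(R) = 1/t$ is intended to produce the target order $M^{-1}(1/t)\|x\|$.

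The hardest technical step will be calibrating the exponents so that the Plancherel estimate is sharp: a naive single application of Cauchy--Schwarz on $\{|\eta|\geq R\}$ yields only the weaker order $R^2 t\|x\|$ rather than $R\|x\|$, so the Hilbert gain in fact requires a more delicate decomposition of the frequency domain, for instance a dyadic splitting $\{2^j R \leq |\eta| < 2^{j+1}R\}$ with Plancherel applied on each annulus and summed via positive increase into a geometric series converging to order $R$. Additional care is needed for the boundary contributions produced by the piecewise-linear cutoff $\psi_R$, which are handled exactly as in Section~\ref{sec:exp_case_Banach} via $\|AR(\pm i\omega R, A)^j\| \leq 2R/M(R)^j$ for $\omega \in \{1,2\}$; each contributes at most $CR\|x\|$ after the substitution $M(R) = 1/t$. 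Density of $D(A^{k+1})$ in $X$ then upgrades the pointwise bound to the operator-norm estimate \eqref{eq:semigroup_growth_Hilbert}.
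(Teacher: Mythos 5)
Your overall plan diverges from the paper's proof at the decisive point, and the divergence is a genuine gap, not just a different route. You correctly observe that applying Plancherel and the Sobolev inequality $\|v\|_{L^\infty}^2\le 2\|v\|_{L^2}\|v'\|_{L^2}$ to the inverse Fourier transform $v$ of $\eta\mapsto(1-\psi_R(\eta))AR(i\eta,A)^{k+1}x$ yields only $\|v\|_{L^\infty}\lesssim R^2 M(R)^{-(k+1)}\|x\|$, hence $\|g_R(t)\|\lesssim R^2t\,\|x\|$ after dividing by $t^k$ and using $tM(R)=1$ --- which in the polynomial case is just the Crandall--Pazy exponent $2/\alpha-1$, weaker even than Theorem~\ref{thm:non_exp}. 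But your proposed repair fails: on the dyadic annulus $\{2^jR\le|\eta|<2^{j+1}R\}$ the same computation gives a contribution of order $(2^jR)^2M(2^jR)^{-(k+1)}\|x\|$, and positive increase makes the series over $j\ge 1$ geometric, but the $j=0$ block alone already contributes $R^2M(R)^{-(k+1)}\|x\|$. Summation can only control the tail, never lower the order of the base term, so the dyadic splitting returns exactly the bound you started with. More fundamentally, with only the operator-norm information $\|AR(i\eta,A)^{k+1}\|\le 2|\eta|/M(|\eta|)^{k+1}$ and no structure on $x$, no pointwise bound on $v$ can beat $\|\mathcal{F}v\|_{L^1}\asymp R^2M(R)^{-(k+1)}\|x\|$, so this entire route is a dead end.

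The mechanism the paper actually uses is a different decomposition. One writes $AT(t)x=\frac{n+1}{t^{n+1}}\int_0^tT(t-\tau)h_{n,t,x}(\tau)\,d\tau$ with $h_{n,t,x}(\tau)=g_{n,t}(\tau)AT(\tau)x$ and $g_{n,t}$ as in \eqref{eq:gnt}, splits $h_{n,t,x}=(h_{n,t,x}-\phi_R*h_{n,t,x})+\phi_R*h_{n,t,x}$, and applies the Cauchy--Schwarz inequality \emph{in the time variable over $[0,t]$}, which costs a factor $\sqrt{t}$ but reduces matters to $\|h_{n,t,x}-\phi_R*h_{n,t,x}\|_{L^2}$. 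Plancherel plus the factorization $\mathcal{F}h_{n,t,x}=n!\,R(i\cdot,A)^n\mathcal{F}h_{0,t,x}$ with $h_{0,t,x}=Ah_{t,x}$, $h_{t,x}=\chi_{[0,t]}T(\cdot)x$, lets one estimate the operator multiplier $(1-\psi_R)n!AR(i\cdot,A)^n$ in $L^\infty_\eta$ --- where positive increase gives $\sup_{|\eta|\ge R}|\eta|/M(|\eta|)^n\le R/(cM(R))^n$, one power of $R$ better than the $L^1_\eta$ bound --- against $\|h_{t,x}\|_{L^2}\le\sqrt{t}\,C_T\|x\|$, whose $\sqrt{t}$ exactly cancels the Cauchy--Schwarz loss. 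With $n\ge 1/\alpha$ fixed and $R=M^{-1}(1/(ct))$ this yields $\|I_1\|\lesssim R\|x\|$, and the smooth part $I_2$ is handled by an integration-by-parts estimate (Lemma~\ref{lem:phi_h_int_bound}) giving the same order. It is this $L^\infty$-multiplier-against-$L^2$-vector factorization, absent from your argument, that produces the Hilbert-space gain.
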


To prove Theorem~\ref{thm:Hilbert_case},
we introduce some functions.
Throughout Section~\ref{sec:Hilbert},
let $\psi\colon \mathbb{R} \to \mathbb{R}_+$ be a Schwartz function such that 
$\|\psi\|_{L^{\infty}} = 1$, $\psi(\eta) = 1$ for $|\eta| \leq 1$, and 
$\supp \psi \subset [-2,2]$. Let $\phi$ 
be the inverse Fourier transform of
$\psi$, and
define $\phi_R(t) \coloneqq R\phi(Rt)$ for $t \in \mathbb{R}$ and $R >0$.
Let $\psi_R \coloneqq \mathcal{F}\phi_R$.
In contrast to the Banach space case discussed in Section~\ref{sec:Banach},
we do not consider a specific function for $\psi$ in this section.

For $n \in \mathbb{N}_0$ and
$t >0$, define $g_{n,t} \colon \mathbb{R}_+ \to \mathbb{R}$ by
\begin{equation}
	\label{eq:gnt}
	g_{n,t}(\tau) \coloneqq
	\begin{cases}
		\tau^n, & 0 \leq \tau \leq t, \\
		\tau^n - (\tau-t)^n, & \tau > t.
	\end{cases}
\end{equation}
Let $A$ be the generator of a bounded $C_0$-semigroup
$(T(t))_{t \geq 0}$ on a Hilbert space $X$.
For $n \in \mathbb{N}_0$,
$t >0$, and $x\in D(A)$,
we define $h_{n,t,x} \colon \mathbb{R} \to X$ by
\begin{equation}
	\label{eq:hntx}
	h_{n,t,x}(\tau) \coloneqq 
	\begin{cases}
		0, & \tau < 0, \\
		g_{n,t}(\tau)AT(\tau) x, & \tau \geq 0.
	\end{cases}
\end{equation}
Then
\begin{align*}
	AT(t)x =
	\frac{n+1}{t^{n+1}}
	\int^t_0 T(t-\tau) h_{n,t,x}(\tau)d\tau 
	= (n+1)(I_1  + I_2)
\end{align*}
for all $n \in \mathbb{N}_0$, $t >0$ and $x \in D(A)$,
where
\begin{align}
	I_1 &\coloneqq 
	\frac{1}{t^{n+1}}
	\int^t_0 T(t-\tau)  ( h_{n,t,x}(\tau) - (\phi_R * h_{n,t,x})(\tau))d\tau, \label{eq:I1_def}\\
	I_2&\coloneqq
	\frac{1}{t^{n+1}}
	\int^t_0 T(t-\tau)  (\phi_R * h_{n,t,x})(\tau)d\tau.\label{eq:I2_def}
\end{align}

To obtain an upper bound for $\|AT(t)\|$, 
we investigate the integrals $I_1$ and $I_2$
in Sections~\ref{sec:estimate_I1} and 
\ref{sec:estimate_I2}, respectively.
In Section~\ref{sec:proof_Hilbert}, we complete the proof of Theorem~\ref{thm:Hilbert_case}.
For the most part, we assume that 
$(T(t))_{t \geq 0}$ is exponentially stable, since
the non-exponential case follows from
the same argument as in 
Section~\ref{sec:non_exp_case_Banach}.

\begin{remark}
	\label{rem:M_O_s}
	In contrast to
	Theorem~\ref{thm:non_exp}, 
	we do not assume here
	that $M(s) = o(s)$ as $s \to \infty$.
	However, 
	we have $M(s) = O(s)$ as $s \to \infty$
	under the assumptions of Theorem~\ref{thm:Hilbert_case}.
	To see this, 
	let $A$ be a closed linear operator on a Banach space
	such that $\sigma(A) \cap i \mathbb{R}\subset (-i b, i b)$ 
	for some $b \geq 0$, and 
	let
	$M\colon \mathbb{R}_+ \to (0,\infty)$ satisfy
	\eqref{eq:resolvent_decay_Hilbert}
	for all $\eta \in \mathbb{R}$
	satisfying $|\eta| \geq b$. 
	Take
	$\eta_0 \in \mathbb{R}$ with $|\eta_0| \geq b$.
	For all
	$\eta \in \mathbb{R}$ satisfying $|\eta| \geq b$,
	the resolvent equation yields
	\[
	\|R( i \eta_0,A)\| \leq 
	\|R(i \eta,A)\| + |\eta- \eta_0| \,
	\|R(i \eta_0,A)\|\,\|R(i \eta,A)\|
	\]
	and hence
	\[
	\frac{1}{\|R(i \eta,A)\|}
	\leq 
	\frac{1}{\|R(i \eta_0,A)\|} + |\eta - \eta_0|.
	\]
	By \eqref{eq:resolvent_decay_Hilbert},
	\[
	M(|\eta|) \leq \frac{1}{\|R(i\eta,A)\|} = O(|\eta|)
	\qquad \text{as $|\eta| \to \infty$}
	\]
	is obtained.
\end{remark}

\subsection{Estimate for
	$I_1$}
\label{sec:estimate_I1}

Let $M \colon \mathbb{R}_+ \to (0,\infty)$
have positive increase. Then
there exist $\alpha >0$, $c \in (0,1]$, and $s_0 \geq 0$ such that 
\begin{equation}
	\label{eq:positive_inc_cond_Hilbert}
	\frac{M(\lambda s)}{M(s)} \geq c\lambda^\alpha
	\qquad \text{for all $\lambda \geq 1$ and $s \geq s_0$}.
\end{equation}
We estimate the integral $I_1$
by using the Fourier transform 
$\mathcal{F}(h_{n,t,x}- \phi_R * h_{n,t,x})$ and Plancherel's theorem.
The following lemma is used to 
estimate the $L^2$-norm of $\mathcal{F}(h_{n,t,x}- \phi_R * h_{n,t,x})$.
\begin{lemma}
	\label{lem:mn_upper_bound}
	Let $M \colon \mathbb{R}_+ \to (0,\infty)$ 
	satisfy 
	\eqref{eq:positive_inc_cond_Hilbert} for some 
	constants $\alpha >0$, $c \in (0,1]$, and $s_0 \geq 0$.
	Let $A$ be a closed linear operator on a Banach space $X$
	satisfying $\sigma(A)
	\cap i \mathbb{R} = \emptyset$.
	If 
	$\|R( i \eta,A)\| \leq M(|\eta|)^{-1}$ for all $\eta \in \mathbb{R}$,
	then
	there exist constants $C,R_0 >0$ such that 
	\[
	\sup_{\eta \in \mathbb{R}}
	\| (1 - \psi_R(\eta)) AR(i \eta,A)^n
	\| \leq 
	\frac{CR}{(cM(R))^n}
	\]
	for all $R\geq R_0$ and $n \in \mathbb{N}$ with $n\geq 1/\alpha$,
	where $\psi_R$
	is as in the paragraph following Theorem~\ref{thm:Hilbert_case}.
\end{lemma}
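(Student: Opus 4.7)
The plan is to reduce the supremum to the region $|\eta|\geq R$, where $1-\psi_R$ is effectively supported, and then control $\|AR(i\eta,A)^n\|$ uniformly there by combining the resolvent identity with the positive-increase property of $M$. Since $\psi \equiv 1$ on $[-1,1]$, the relation $\psi_R(\eta) = \psi(\eta/R)$ forces $1-\psi_R(\eta) = 0$ for $|\eta|\leq R$, while $\|1-\psi_R\|_{L^{\infty}(\mathbb{R})} \leq 1 + \|\psi\|_{L^{\infty}(\mathbb{R})} = 2$. Hence it suffices to estimate $\sup_{|\eta|\geq R}\|AR(i\eta,A)^n\|$ for $R$ sufficiently large.

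For this, I would use the identity $AR(i\eta,A) = i\eta R(i\eta,A) - I$ together with the hypothesis $\|R(i\eta,A)\|\leq 1/M(|\eta|)$ to obtain
\[
\|AR(i\eta,A)^n\| \leq \|AR(i\eta,A)\|\,\|R(i\eta,A)\|^{n-1} \leq \frac{|\eta|}{M(|\eta|)^n} + \frac{1}{M(|\eta|)^{n-1}}.
\]
For any $|\eta|\geq R\geq s_0$, the positive-increase inequality \eqref{eq:positive_inc_cond_Hilbert} (with $\lambda = |\eta|/R$) yields $M(|\eta|) \geq cM(R)(|\eta|/R)^{\alpha}$, so the first term is bounded by $R^{\alpha n}|\eta|^{1-\alpha n}/(cM(R))^n$. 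The hypothesis $n\geq 1/\alpha$ forces $1-\alpha n \leq 0$, so the bound $|\eta|\geq R$ gives $|\eta|^{1-\alpha n}\leq R^{1-\alpha n}$, and the first term collapses to $R/(cM(R))^n$. For the second term I would invoke Remark~\ref{rem:M_O_s} to fix $R_0\geq s_0$ and a constant $K>0$ with $M(s)\leq K s$ for all $s\geq R_0$; then, using also $M(|\eta|)\geq cM(R)$,
\[
\frac{1}{M(|\eta|)^{n-1}} \leq \frac{1}{(cM(R))^{n-1}} = \frac{cM(R)}{(cM(R))^n} \leq \frac{cKR}{(cM(R))^n}.
\]
Combining these two bounds and multiplying by $\|1-\psi_R\|_{L^{\infty}(\mathbb{R})} \leq 2$ yields the desired inequality with $C \coloneqq 2(1+cK)$, uniformly in $n \in \mathbb{N}$ with $n\geq 1/\alpha$ and in $R\geq R_0$.

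The only real care point is bookkeeping of the exponent $1-\alpha n$ at the boundary case $n=\lceil 1/\alpha\rceil$; for this one needs $R_0\geq 1$ so that $R^{1-\alpha n}\leq R$, and $cM(R)\geq 1$ so that the bound $1/(cM(R))^{n-1} \leq cM(R)/(cM(R))^n$ stays useful as $n$ grows, both of which are arranged by enlarging $R_0$ once. No step depends sensitively on $n$, so the constant $C$ is genuinely independent of $n$, and $R_0$ depends only on $M$, $c$, $\alpha$, and $s_0$. I do not expect any serious obstacle beyond locating $R_0$ that simultaneously accommodates the positive-increase regime ($R\geq s_0$) and the linear upper bound $M(s)\leq Ks$ supplied by Remark~\ref{rem:M_O_s}.
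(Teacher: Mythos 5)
Your proposal is correct and follows essentially the same route as the paper: the identity $AR(i\eta,A)^n=-R(i\eta,A)^{n-1}+i\eta R(i\eta,A)^n$ combined with $M(s)=O(s)$ from Remark~\ref{rem:M_O_s}, the vanishing of $1-\psi_R$ on $[-R,R]$, and the positive-increase bound $\sup_{|\eta|\geq R}|\eta|/M(|\eta|)^n\leq R/(cM(R))^n$ for $n\alpha\geq 1$. The only (harmless) cosmetic differences are that you bound $\|1-\psi_R\|_{L^\infty}$ by $2$ rather than by $1$ (which holds since $0\leq\psi\leq 1$) and absorb the $1/M(|\eta|)^{n-1}$ term after restricting to $|\eta|\geq R$ rather than before; your worries about $R_0\geq 1$ and $cM(R)\geq 1$ are unnecessary, as $R^{\alpha n}R^{1-\alpha n}=R$ exactly and $1/(cM(R))^{n-1}=cM(R)/(cM(R))^n$ is an identity.
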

\begin{proof}
	Let $n \in \mathbb{N}$.
	We have
	\[
	AR(i \eta, A)^n = 
	-R(i \eta, A)^{n-1} + i \eta R(i \eta, A)^n.
	\]
	Hence, by assumption,
	\[
	\|AR(i \eta , A)^n\|  \leq \frac{1}{M(|\eta|)^{n-1}} + 
	\frac{|\eta|}{M(|\eta|)^n}
	\]
	for all $\eta \in \mathbb{R}$.
	Since $M(s)= O(s)$ as $s \to \infty$ (see Remark~\ref{rem:M_O_s}), 
	there exist $C,s_1>0$ such that 
	\begin{equation}
		\label{eq:AR_n_estimate}
		\|AR(i \eta , A)^n\| \leq \frac{C|\eta|}{M(|\eta|)^n}
	\end{equation}
	for all $\eta \in \mathbb{R}$ satisfying $|\eta| \geq s_1$.
	We also have $\psi_R(\eta) = \psi(\eta/R )$ for 
	all $\eta \in \mathbb{R}$ and $R>0$.
	Therefore, 
	\begin{equation}
		\label{eq:char_bound}
		|1 - \psi_R(\eta)| \leq 1 - \chi_{[-R,R]}(\eta) 
	\end{equation}
	for all $\eta \in \mathbb{R}$  and $R>0$, where
	$\chi_{[-R,R]}$ denotes the characteristic function of the interval
	$[-R,R]$.
	The estimates \eqref{eq:AR_n_estimate} and \eqref{eq:char_bound}
	yield
	\begin{equation}
		\label{eq:1_psi_AR_n}
		\|
		(1 - \psi_R(\eta))AR(i \eta , A)^n
		\| \leq 
		(1 - \chi_{[-R,R]}(\eta) )  \frac{C |\eta|}{M(|\eta|)^n}
	\end{equation}
	for all $\eta \in \mathbb{R}$ whenever $R \geq s_1$.

	If $|\eta|\geq R > s_0$, then
	\eqref{eq:positive_inc_cond_Hilbert} gives
	\[
	\frac{M(|\eta|)}{M(R)} \geq c \left(
	\frac{|\eta|}{R}
	\right)^{\alpha},
	\]
	and therefore,
	\[
	\frac{|\eta|}{M(|\eta|)^n} \leq 
	|\eta|
	\left(
	\frac{1}{c} 
	\left(
	\frac{R}{|\eta|}
	\right)^{\alpha}
	\frac{1}{M(R)}
	\right)^n =
	\frac{R^{n \alpha}}{|\eta|^{n \alpha - 1} (cM(R))^n}.
	\]
	Assume that  $n \alpha \geq 1$.
	Then $|\eta| \geq R$ implies 
	\[
	\frac{R^{n \alpha}}{|\eta|^{n \alpha - 1}} \leq R.
	\]
	This yields
	\begin{equation}
		\label{eq:1_chi_M}
		(1 - \chi_{[-R,R]}(\eta) )  \frac{|\eta|}{M(|\eta|)^n} \leq 
		\sup_{|\zeta| \geq  R}
		\frac{|\zeta|}{M(|\zeta|)^n} \leq 
		\frac{R}{(c M(R))^n} 
	\end{equation}
	for all $\eta \in \mathbb{R}$
	whenever $R >  s_0$. Combining the estimates \eqref{eq:1_psi_AR_n}
	and \eqref{eq:1_chi_M}, we obtain the desired conclusion.
\end{proof}

The following lemma gives an estimate for the integral 
$I_1$.
\begin{lemma}
	\label{lem:delta_phi_h_int_bound}
	Let $M \colon \mathbb{R}_+ \to (0,\infty)$ 
	satisfy 
	\eqref{eq:positive_inc_cond_Hilbert} for some 
	constants $\alpha >0$, $c \in (0,1]$, and $s_0 \geq 0$.
	Let $A$ be the generator of an exponentially stable
	$C_0$-semigroup
	$(T(t))_{t \geq 0}$
	on a Hilbert space $X$, and let $C_T  \coloneqq \sup_{t \geq 0} \|T(t)\|$.
	If $\|R( i \eta,A)\| \leq M(|\eta|)^{-1}$ for all $\eta \in \mathbb{R}$,
	then
	there exist constants $C,R_0 >0$ such that 
	\[
	\left\|
	\frac{1}{t^{n+1}}
	\int^t_0 T(t-\tau)  ( h_{n,t,x}(\tau) - (\phi_R * h_{n,t,x})(\tau))d\tau
	\right\| \leq
	\frac{n!CC_T^2R\|x\|}{(ctM(R))^n}
	\]
	for all 
	$n \in \mathbb{N}$ with $n\geq 1/\alpha$, $t >0$, $R \geq R_0$,
	and $x \in D(A)$, where 
	$\phi_R$ is as in the paragraph following Theorem~\ref{thm:Hilbert_case} and 
	$h_{n,t,x}$ is defined by \eqref{eq:hntx}.
\end{lemma}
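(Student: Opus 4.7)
My plan is to exploit the Hilbert space structure via Plancherel's theorem: compute $\mathcal{F}(h_{n,t,x})$ in a form in which one power of the resolvent is traded for the Fourier image of a compactly supported $L^2$-function, then combine the pointwise resolvent bound of Lemma~\ref{lem:mn_upper_bound} with Plancherel and Cauchy--Schwarz to estimate $I_1$ directly.

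\textbf{The key identity.} The first task is to prove that, for $x \in D(A)$ and $\eta \in \mathbb{R}$,
\[
\mathcal{F}(h_{n,t,x})(\eta) = n!\,AR(i\eta,A)^{n}\int_0^t e^{-i\eta \tau}T(\tau)x\,d\tau.
\]
Starting from $g_{n,t}(\tau) = \tau^n - (\tau-t)^n \chi_{[t,\infty)}(\tau)$ and the semigroup identity $AT(\tau)x = T(t)AT(\tau-t)x$ for $\tau \geq t$, a direct computation using the standard formula $\int_0^\infty \tau^n e^{-i\eta\tau} T(\tau)y\,d\tau = n!\,R(i\eta,A)^{n+1}y$ for an exponentially stable semigroup (obtained by differentiating the Laplace representation of $R(\lambda, A)$ in $\lambda$) yields $\mathcal{F}(h_{n,t,x})(\eta) = n!\,AR(i\eta,A)^{n+1}(1 - e^{-i\eta t}T(t))x$. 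Differentiating $\tau \mapsto e^{-i\eta\tau}T(\tau)x$ next gives the fundamental theorem of calculus identity
\[
(1 - e^{-i\eta t}T(t))x = (i\eta - A)\int_0^t e^{-i\eta\tau} T(\tau)x\,d\tau,
\]
and combining this with the telescoping $R(i\eta,A)^{n+1}(i\eta-A) = R(i\eta,A)^n$ on $D(A)$ produces the displayed formula.

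\textbf{Closing via Plancherel and Cauchy--Schwarz.} Set $H := h_{n,t,x} - \phi_R * h_{n,t,x}$, so $\mathcal{F}(H)(\eta) = (1-\psi_R(\eta))\mathcal{F}(h_{n,t,x})(\eta)$. Lemma~\ref{lem:mn_upper_bound} applied to $n \geq 1/\alpha$ and $R \geq R_0$ provides the pointwise bound
\[
\|\mathcal{F}(H)(\eta)\| \leq \frac{n!\,CR}{(cM(R))^n}\left\|\int_0^t e^{-i\eta\tau}T(\tau)x\,d\tau\right\|.
\]
Because $\int_0^t e^{-i\eta\tau}T(\tau)x\,d\tau$ is the Fourier transform of the compactly supported $X$-valued function $\tau \mapsto \chi_{[0,t]}(\tau)T(\tau)x$, Plancherel's theorem (valid on $L^2(\mathbb R, X)$ since $X$ is Hilbert) yields
\[
\int_{\mathbb R}\left\|\int_0^t e^{-i\eta\tau}T(\tau)x\,d\tau\right\|^2 d\eta = 2\pi\int_0^t\|T(\tau)x\|^2\,d\tau \leq 2\pi\, t\, C_T^2\|x\|^2.
\]
A second application of Plancherel then yields $\|H\|_{L^2(\mathbb{R},X)} \leq n!\,CR(cM(R))^{-n}C_T\sqrt{t}\,\|x\|$. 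Finally, Cauchy--Schwarz applied to the defining integral of $I_1$ in \eqref{eq:I1_def} gives
\[
\|I_1\| \leq \frac{C_T}{t^{n+1}}\int_0^t \|H(\tau)\|\,d\tau \leq \frac{C_T\sqrt{t}}{t^{n+1}}\|H\|_{L^2(\mathbb R, X)} \leq \frac{n!\,C C_T^2 R\|x\|}{(ctM(R))^n},
\]
which is the claimed estimate.

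\textbf{Main obstacle.} The decisive point is the algebraic rewriting in the first step. A naive use of Plancherel on $\mathcal{F}(h_{n,t,x})(\eta) = n!\,AR(i\eta,A)^{n+1}(1-e^{-i\eta t}T(t))x$ fails because the factor $(1-e^{-i\eta t}T(t))x$ is merely bounded in $\eta$, so no amount of resolvent decay can produce an integrable square. Trading one power of $R(i\eta,A)$ to replace this bounded factor by the Fourier transform of a compactly supported function produces exactly the $\sqrt{t}$ gain in $\eta$ (via Plancherel on $[0,t]$) that closes the estimate and generates the precise $R/(cM(R))^n$ scaling in the final bound.
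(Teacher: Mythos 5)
Your proposal is correct and follows essentially the same route as the paper: Cauchy--Schwarz on the integral, Plancherel combined with the multiplier bound of Lemma~\ref{lem:mn_upper_bound}, and the $\sqrt{t}$ gain from the $L^2$-norm of $\tau \mapsto \chi_{[0,t]}(\tau)T(\tau)x$. The only (harmless) difference is how you reach the identity $\mathcal{F}(h_{n,t,x})(\eta) = n!\,AR(i\eta,A)^n \int_0^t e^{-i\eta\tau}T(\tau)x\,d\tau$: you compute the Laplace transform explicitly and trade one resolvent power via the fundamental-theorem identity, whereas the paper obtains the same formula by iterating the convolution relation $h_{n,t,x} = n\,T * h_{n-1,t,x}$ and writing $h_{0,t,x} = A h_{t,x}$.
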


\begin{proof}
	Let $n \in \mathbb{N}$, $t >0$, and $x \in D(A)$.
	The Cauchy--Schwarz inequality yields
	\begin{align}
		\label{eq:int_T_del_h_CS}
		&\left\|
		\frac{1}{t^{n+1}}
		\int^t_0 T(t-\tau)   ( h_{n,t,x}(\tau) - (\phi_R * h_{n,t,x})(\tau))d\tau
		\right\| \notag\\
		&\qquad \qquad \leq 
		\frac{C_T }{t^{n+1/2}} 
		\|h_{n,t,x}- \phi_R * h_{n,t,x}\|_{L^2(\mathbb{R})}.
	\end{align}
	By the exponential stability of $(T(t))_{t \geq 0}$, we have
	$h_{m,t,x} \in L^1(\mathbb{R},X) \cap L^2(\mathbb{R},X)$ for all $m \in \mathbb{N}_0$.
	Since 
	\[
	h_{n,t,x}(\tau) = n \int_0^{\tau} T(\tau - s) h_{n-1,t,x}(s)ds
	\] 
	for 
	all $\tau \geq 0$, Fubini's theorem shows that 
	\[
	\mathcal{F}(h_{n,t,x})(\eta) = nR(i \eta,A)  \mathcal{F}(h_{n-1,t,x}) (\eta)
	\]
	for all $\eta \in \mathbb{R}$.
	Repeating this argument, we derive
	\begin{equation}
		\label{eq:Fourier_induction}
		\mathcal{F}(h_{n,t,x})(\eta) = n! R(i \eta,A)^n \mathcal{F}(h_{0,t,x}) (\eta)
	\end{equation}
	for all $\eta \in \mathbb{R}$.
	Let $R>0$. By \eqref{eq:Fourier_induction},
	\begin{align*}
		(\mathcal{F} ( h_{n,t,x} - \phi_R * h_{n,t,x} ))(\eta) 
		&=
		(\mathcal{F} h_{n,t,x} )(\eta)  - 
		(\mathcal{F}\phi_R )(\eta) (\mathcal{F} h_{n,t,x})(\eta) \\
		&=
		(1 - \psi_R(\eta)) n! R(i \eta,A)^n \mathcal{F}(h_{0,t,x}) (\eta).
	\end{align*}
	Define $U_n(\eta) \coloneqq n! AR(i \eta,A)^n$
	for $\eta \in \mathbb{R}$ and
	\[
	h_{t,x}(\tau) \coloneqq 
	\begin{cases}
		0, & \tau < 0, \\
		g_{0,t}(\tau)T(\tau)x, & \tau \geq 0.
	\end{cases}
	\]
	Since $h_{0,t,x}(\tau) = Ah_{t,x}(\tau)$, we have
	\[
	\mathcal{F} ( h_{n,t,x} - \phi_R * h_{n,t,x}) =
	(1 - \psi_R) U_n \mathcal{F}h_{t,x}.
	\]
	
	Plancherel's theorem gives
	\begin{align*}
		\|h_{n,t,x} - \phi_R * h_{n,t,x}\|_{L^2(\mathbb{R})} &=
		\frac{1}{\sqrt{2\pi}}
		\| \mathcal{F}(h_{n,t,x} - \phi_R * h_{n,t,x})\|_{L^2(\mathbb{R})}\\ 
		&=
		\frac{1}{\sqrt{2\pi}}
		\| (1 - \psi_R) U_n \mathcal{F}h_{t,x}
		\|_{L^2(\mathbb{R})}
	\end{align*}
	and
	\begin{align*}
		\frac{1}{\sqrt{2\pi}}
		\| (1 - \psi_R) U_n \mathcal{F}h_{t,x}
		\|_{L^2(\mathbb{R})} &\leq 
		\frac{1}{\sqrt{2\pi}}\| (1 - \psi_R) U_n 
		\|_{L^\infty(\mathbb{R})} \|  \mathcal{F}h_{t,x}
		\|_{L^2(\mathbb{R})}\\ &=
		\| (1 - \psi_R) U_n 
		\|_{L^\infty(\mathbb{R})} \| h_{t,x}
		\|_{L^2(\mathbb{R})}.
	\end{align*}
	Hence,
	\begin{equation}
		\label{eq:del_phi_h}
		\|h_{n,t,x} - \phi_R * h_{n,t,x}\|_{L^2(\mathbb{R})} 
		\leq \| (1 - \psi_R) U_n 
		\|_{L^\infty(\mathbb{R})} \| h_{t,x}
		\|_{L^2(\mathbb{R})}.
	\end{equation}
	Moreover,
	\begin{equation}
		\label{eq:h_tx_bound}
		\| h_{t,x}
		\|_{L^2(\mathbb{R})}^2 =
		\int^t_0 \|T(\tau)x\|^2 d\tau \leq t C_T^2 \|x\|^2.
	\end{equation}
	Combining the estimates \eqref{eq:int_T_del_h_CS}, 
	\eqref{eq:del_phi_h}, and \eqref{eq:h_tx_bound}, we obtain
	\begin{align}
		&\left\|
		\frac{1}{t^{n+1}}
		\int^t_0 T(t-\tau)  (h_{n,t,x}(\tau)- (\phi_R * h_{n,t,x})(\tau))d\tau
		\right\| \notag \\
		&\qquad\qquad \leq
		\frac{C_T^2 \|x\|}{t^n}\| (1 - \psi_R) U_n 
		\|_{L^\infty(\mathbb{R})}.
		\label{eq:int_T_d_phi_h}
	\end{align}
	
	By Lemma~\ref{lem:mn_upper_bound}, 
	there exist $C,R_0 >0$ such that 
	\begin{equation}
		\label{eq:1_phi_L_inf}
		\| (1 - \psi_R) U_n 
		\|_{L^\infty(\mathbb{R})}
		\leq 
		\frac{n!CR}{(cM(R))^n}
	\end{equation}
	for all $R\geq R_0$ and $n \in \mathbb{N}$ with $n\geq 1/\alpha$.
	The desired conclusion follows from
	the estimates \eqref{eq:int_T_d_phi_h} and \eqref{eq:1_phi_L_inf}.
\end{proof}

\subsection{Estimate for
	$I_2$
}
\label{sec:estimate_I2}
Next, we derive an estimate for the integral 
$I_2$. To this end, we define 
\begin{equation}
	\label{eq:Phi_k_def}
	\Phi_1 \coloneqq |\phi'| \quad \text{and}\quad 
	\Phi_{k+1}(s) \coloneqq
	\begin{dcases}
		\int^s_{-\infty} \Phi_k(\sigma) d\sigma,& s < 0, \vspace{5pt}\\
		- \int^{\infty}_{s} \Phi_k(\sigma) d\sigma,& s \geq  0
	\end{dcases}
\end{equation}
for
$k \in \mathbb{N}$, where $\phi$ is 
as in the paragraph following Theorem~\ref{thm:Hilbert_case}.
\begin{lemma}
	\label{lem:phi_h_int_bound}
	Let $A$ be the generator of a bounded 
	$C_0$-semigroup
	$(T(t))_{t \geq 0}$
	on a Banach space $X$, and define 
	$C_T  \coloneqq \sup_{t \geq 0}\|T(t)\|$.
	For $k \in \mathbb{N}$, 
	let $\Phi_k$ be defined by \eqref{eq:Phi_k_def}.
	Then
	\begin{align}
		&\left\| 
		\frac{1}{t^{n+1}}
		\int^t_0 T(t-\tau)  (\phi_R * h_{n,t,x})(\tau)d\tau
		\right\| \notag \\
		&\qquad \qquad \leq
		2C_T^2R\|x\|
		\sum_{k=0}^{n} 
		\frac{n!}{(n-k+1)!}  \frac{\|\Phi_{k+1}\|_{L^1(\mathbb{R})}}{(Rt)^k}
		\label{eq:int_T_phi_h_lemma}
	\end{align}
	for all $n \in \mathbb{N}$, $t,R>0$, and
	$x \in D(A)$,
	where 
	$\phi_R$
	is as in the paragraph following Theorem~\ref{thm:Hilbert_case} and 
	$h_{n,t,x}$ is defined by \eqref{eq:hntx}.
\end{lemma}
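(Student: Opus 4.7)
Proof plan.

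The strategy combines Fubini's theorem with iterated integration by parts. Applying Fubini to swap the integration orders, I would first write
\[
\int_0^t T(t-\tau)(\phi_R * h_{n,t,x})(\tau)\,d\tau = \int_0^\infty g_{n,t}(\sigma)\, W(\sigma)\, AT(\sigma)x\,d\sigma,
\]
where $W(\sigma) \coloneqq \int_0^t T(t-\tau) \phi_R(\tau-\sigma)\,d\tau$. Using the identity $AT(\sigma)x = \frac{d}{d\sigma}T(\sigma)x$, I would then integrate by parts in $\sigma$. The boundary term at $\sigma = 0$ vanishes because $g_{n,t}^{(j)}(0) = 0$ for $0 \leq j \leq n-1$, while the boundary term at $\sigma = \infty$ vanishes because $\phi$ is Schwartz, so $W$ and all its derivatives decay rapidly.

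Each integration by parts splits the integrand by the product rule into one contribution where $g_{n,t}$ is differentiated via $g_{n,t}^{(j)}(\sigma) = \frac{n!}{(n-j)!}\, g_{n-j,t}(\sigma)$, and one contribution where $W$ is differentiated, producing $\phi_R'$ inside the inner $\tau$-integral. Iterating this scheme yields a sum in which the coefficient $\frac{n!}{(n-k+1)!}$ appears from repeatedly differentiating $g_{n,t}$, while the inner integral becomes an iterated antiderivative of $\phi_R'$. Crucially, this iterated antiderivative, anchored to vanish at both $\pm\infty$ (forcing a jump at the origin), is controlled pointwise by $\Phi_{k+1}(R(\tau-\sigma))$ in absolute value: the two-sided recursive definition of $\Phi_{k+1}$ is precisely what is needed for this anchoring, with the outer absolute value $\Phi_1 = |\phi'|$ ensuring that each $\|\Phi_{k+1}\|_{L^1}$ bounds the true (signed) antiderivative.

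Each resulting summand is then bounded by $\|T(t-\tau)\|\,\|T(\sigma)\| \leq C_T^2$, combined with the pointwise polynomial estimate $|g_{m,t}(\sigma)| \leq m\,t\,\sigma^{m-1}$ (valid for all $\sigma \geq 0$: using $\sigma^m \leq t\,\sigma^{m-1}$ on $[0,t]$ and the mean value theorem on $[t,\infty)$). Together with the prefactor $1/t^{n+1}$ outside, the scaling $\phi_R = R\phi(R\cdot)$ (contributing $R$), and the change of variable $R\sigma \mapsto \sigma$ in the $k$-fold antiderivative (contributing $R^{-k}$), this yields the claimed dependence $C_T^2 R/(Rt)^k$; the remaining factor $2$ arises from splitting $\|\Phi_{k+1}\|_{L^1}$ into contributions over $(-\infty,0)$ and $[0,\infty)$, matching the two-sided definition.

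The main obstacle will be the careful bookkeeping of coefficients at each iteration, and especially the identification of the iterated antiderivative of $\phi_R'$ with $\Phi_{k+1}$ in $L^1$-norm (including the correct jump at the origin that guarantees decay at both $\pm\infty$). A secondary subtlety is the non-smoothness of $g_{n,t}^{(n)}$ at $\sigma = t$, but this is avoided by ensuring at most $n-1$ derivatives of $g_{n,t}$ are ever taken in the iteration, keeping the relevant derivatives continuous at $\sigma = t$.
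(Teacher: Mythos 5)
There is a genuine gap in the iteration scheme. After your single legitimate integration by parts (moving $\tfrac{d}{d\sigma}$ off $T(\sigma)x$ onto $g_{n,t}(\sigma)W(\sigma)$), the semigroup factor $T(\sigma)x$ carries no derivative, so the process cannot be repeated in the same direction; and if you instead integrate by parts the term containing $W'(\sigma)$ so as to accumulate an antiderivative on the $\phi_R'$-kernel, the product rule forces you to differentiate $g_{n,t}(\sigma)T(\sigma)x$, which regenerates $AT(\sigma)x$ and returns you to an integral of the original form (indeed the anchored antiderivative of $W'$ is just $W$ again). So the scheme ``differentiate $g_{n,t}$ repeatedly while the inner kernel becomes an iterated antiderivative of $\phi_R'$'' does not close as long as the semigroup is still attached to the polynomial weight: each integration by parts trades exactly one derivative between the two factors, and one of those factors always contains $T(\sigma)x$. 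The paper resolves this with a two-stage argument: first pull out $\|T(t-\tau)\|\le C_T$ and perform \emph{one} vector-valued integration by parts replacing $h_{n,t,x}$ by its primitive $H_{n,t,x}(s)=s^nT(s)x-n\int_0^s\sigma^{n-1}T(\sigma)x\,d\sigma$, bounded by $2C_Ts^n\|x\|$; this eliminates the semigroup entirely and is the true source of both the second $C_T$ and the factor $2$ (your attribution of the $2$ to splitting $\|\Phi_{k+1}\|_{L^1}$ over $(-\infty,0)$ and $[0,\infty)$ is incorrect --- that is simply the full $L^1$-norm, with no doubling). Only then does the iterated integration by parts run, on the purely scalar integral $\int_0^\infty|\phi'(R\tau-s)|(s/R)^n\,ds$, where the differentiated factor is the inert power $s^n$.

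Two further points. First, the paper takes the absolute value of $\phi'$ \emph{before} iterating, so the identity
$\int_0^{\infty}|\phi'(\tau-s)|s^n\,ds=n!\int_{-\infty}^{\tau}\Phi_{n+1}+\sum_{k=0}^{n-1}\tfrac{n!}{(n-k)!}\langle\Phi_{k+1}\rangle\,\tau^{n-k}$
is an exact computation for the nonnegative kernel $|\phi'|$; your plan keeps the signed kernel throughout and only claims at the end that the signed iterated antiderivatives are dominated by $\Phi_{k+1}$, which is not automatic (the anchoring constants of the signed and unsigned antiderivatives differ). Second, the coefficient $\tfrac{n!}{(n-k+1)!}$ arises from integrating $\tau^{n-k}$ over $[0,t]$ (contributing $t^{n-k+1}/(n-k+1)$), not from your pointwise bound $|g_{m,t}(\sigma)|\le mt\sigma^{m-1}$, which pushes the combinatorial factors in the wrong direction (it multiplies by $n-k$ rather than dividing by $n-k+1$). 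A Fubini-first variant of the proof is possible, but it must still pass through the primitive $H_{n,t,x}$ and its bound $2C_Ts^n\|x\|$ before any repeated integration by parts can take place.
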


\begin{proof}
	Let $n \in \mathbb{N}$, $t,R>0$, and
	$x \in D(A)$. We have
	\begin{align}
		\label{eq:int_T_phi_h}
		\left\| 
		\frac{1}{t^{n+1}}
		\int^t_0 T(t-\tau)  (\phi_R * h_{n,t,x})(\tau)d\tau
		\right\| 
		\leq
		\frac{C_T }{t^{n+1}}
		\int_0^t  \|(\phi_R * h_{n,t,x})(\tau)\|d\tau.
	\end{align}
	Define $H_{n,t,x}\colon \mathbb{R} \to X$ by
	\[
	H_{n,t,x}(s) \coloneqq \int^s_0 h_{n,t,x}(\sigma) d\sigma.
	\]
	Integration by parts yields
	\begin{align}
		(\phi_R * h_{n,t,x})(\tau)  &=
		R\int_0^{\infty} \phi(R(\tau-s)) h_{n,t,x}(s)ds \notag \\
		&= 
		R
		\int_0^\infty \phi'(R\tau-s) H_{n,t,x}\left( \frac{s}{R} \right) ds
		\label{eq:phi_hn_int}
	\end{align}
	for all $\tau \in [0,t]$.
	If $0 \leq s \leq t$, then
	\[
	H_{n,t,x}(s) = \int^s_0 \sigma^n AT(\sigma) x d\sigma =
	s^n T(s)x - n \int^s_0 \sigma^{n-1}T(\sigma) x d\sigma.
	\]
	This gives 
	\begin{equation}
		\label{eq:Hn_bound}
		\|H_{n,t,x}(s)\| \leq 
		2C_T s^n \|x\|.
	\end{equation}
	A routine calculation shows that 
	the same estimate holds for $s > t$. Hence by
	\eqref{eq:phi_hn_int},
	\begin{align}
		\label{eq:int_norm_phi_h}
		\int_0^t  \|(\phi_R * h_{n,t,x})(\tau)\|d\tau
		\leq 
		2C_T R\|x\|
		\int_0^t 
		\int_0^\infty |\phi'(R\tau-s)|\,  \left( 
		\frac{s}{R}
		\right)^n
		ds d\tau.
	\end{align}
	
	Fix $\tau \in [0,t]$ for a moment.
	To estimate the integral
	\[
	\int_0^\infty |\phi'(R\tau-s)|\,  \left( 
	\frac{s}{R}
	\right)^n
	ds,
	\]
	we define 
	\[
	\langle \Phi_k \rangle \coloneqq \int_{-\infty}^{\infty} \Phi_k(s)ds
	\]
	for $k \in \mathbb{N}$, where $\Phi_k$ is as in 
	\eqref{eq:Phi_k_def}.
	Then, for all $k \in \mathbb{N}$,
	\[
	\Phi_{k+1}' = \Phi_k - \langle \Phi_k \rangle \delta
	\]
	in the sense of distributions, where $\delta$ denotes the Dirac delta function 
	at zero. 
	Moreover, 
	for each 
	$k \in \mathbb{N}$ and
	$\ell \in \mathbb{N}_0$, there exists $C_{k,\ell}>0$ such that 
	for all $s \in \mathbb{R}$,
	\[
	|\Phi_k (s)| \leq \frac{C_{k,\ell}}{(1+|s|)^{\ell}}.
	\]
	Integrating by parts repeatedly, we derive
	\[
	\int_0^{\infty} |\phi'(\tau-s)| s^n ds =
	n! \int_{-\infty}^\tau \Phi_{n+1}(s) ds + 
	\sum_{k=0}^{n-1} 
	\frac{n!}{(n-k)!} \langle \Phi_{k+1} \rangle \tau^{n-k}.
	\]
	This yields
	\begin{equation}
		\label{eq:phi'_tau_int}
		\int_0^{\infty} |\phi'(R\tau-s)| \left( \frac{s}{R} \right)^n ds \leq 
		\sum_{k=0}^{n} 
		\frac{n!}{(n-k)!} \|\Phi_{k+1}\|_{L^1(\mathbb{R})} \frac{\tau^{n-k}}{R^k}.
	\end{equation}
	
	From \eqref{eq:phi'_tau_int}, it follows that
	\begin{align*}
		\int_0^t 
		\int_0^\infty |\phi'(R\tau-s)|\,  \left( 
		\frac{s}{R}
		\right)^n
		ds d\tau
		&\leq 
		\sum_{k=0}^{n} 
		\frac{n!}{(n-k+1)!} \|\Phi_{k+1}\|_{L^1(\mathbb{R})} \frac{t^{n-k+1}}{R^k}.
	\end{align*}
	Combining this with 
	\eqref{eq:int_T_phi_h} and \eqref{eq:int_norm_phi_h},
	we obtain the desired estimate \eqref{eq:int_T_phi_h_lemma}.
\end{proof}

\subsection{Estimate for $\|AT(t)\|$}
\label{sec:proof_Hilbert}
We are now in a position to prove 
Theorem~\ref{thm:Hilbert_case}.
\begin{proof}{Proof of Theorem~\ref{thm:Hilbert_case}}
	Assume 
	that the $C_0$-semigroup
	$(T(t))_{t \geq 0}$ is exponentially stable and that $b = 0$.
	Then the resolvent estimate \eqref{eq:resolvent_decay_Hilbert}
	holds for all $\eta \in \mathbb{R}$.
	Let constants $\alpha >0$, $c \in (0,1]$, and $s_0 \geq 0$
	satisfy
	\eqref{eq:positive_inc_cond_Hilbert}.
	Set 
	\begin{equation}
		\label{eq:n_def_Hilbert}
		n \coloneqq \min\{ m \in \mathbb{N} : m  \geq 1/\alpha \}. 
	\end{equation}
	Let the integrals $I_1$ and $I_2$ be defined by
	\eqref{eq:I1_def} and \eqref{eq:I2_def}, respectively.
	By Lemma~\ref{lem:delta_phi_h_int_bound},
	there exist $C,R_0 >0$ such that 
	\begin{equation}
		\label{eq:del_phi_int_bound}
		\|I_1\| \leq
		\frac{n!CC_T^2R\|x\|}{(ctM(R))^n}
	\end{equation}
	for all  $n \in \mathbb{N}$ with $n\geq 1/\alpha$,
	$t >0$, $R \geq R_0$, and $x \in D(A)$, where 
	$C_T  \coloneqq \sup_{t \geq 0} \|T(t)\|$.
	Moreover,
	Lemma~\ref{lem:phi_h_int_bound} gives
	\begin{equation}
		\label{eq:phi_int_bound}
		\|I_2\| \leq 
		2C_T^2R\|x\|
		\sum_{k=0}^{n} 
		\frac{n!}{(n-k+1)!} \frac{\|\Phi_{k+1}\|_{L^1(\mathbb{R})} }{(Rt)^k}
	\end{equation}
	for all 
	$n \in \mathbb{N}$, 
	$t,R>0$, and $x \in D(A)$, where
	$\Phi_{k}$ is defined by \eqref{eq:Phi_k_def}
	for $k\in \mathbb{N}$.
	
	Define 
	\[
	t_0 \coloneqq \frac{1}{c (M(R_0) + 1)}.
	\]
	Let $t \in (0,t_0]$ be arbitrary and
	set 
	\[
	R \coloneqq M^{-1}\left( \frac{1}{ct} \right).
	\]
	Then
	\begin{equation}
		\label{eq:R_lower_bound}
		R \geq 
		M^{-1}( M(R_0) + 1) 
		\geq R_0.
	\end{equation}
	By the property \eqref{eq:li_prop2}
	of left-inverses,
	\begin{equation}
		\label{eq:MR_lower_bound}
		M(R) = M\left(M^{-1} \left( \frac{1}{ct}\right) \right) = \frac{1}{ct}.
	\end{equation}
	This yields
	\begin{equation}
		\label{eq:R_tM_M_inv}
		\frac{R}{(ctM(R))^n} =
		M^{-1}\left( \frac{1}{ct} \right).
	\end{equation}
	Moreover,
	since $M(s) = O(s)$ as $s \to \infty$ (see Remark~\ref{rem:M_O_s}),
	there exists $C_1>0$ such that 
	$M(s) \leq C_1(s+1)$ for all $s \geq 0$.
	Combining 
	this with  \eqref{eq:R_lower_bound} and 
	\eqref{eq:MR_lower_bound}, we obtain
	\[
	Rt = \frac{R}{cM(R)} \geq \frac{R_0}{cC_1(R_0+1)} \eqqcolon c_1,
	\]
	where the constant $c_1$ does not depend on $t$.
	Hence,
	\begin{equation}
		\label{eq:R_sum_M_inv_sum}
		R
		\sum_{k=0}^{n} 
		\frac{n!}{(n-k+1)!}  \frac{\|\Phi_{k+1}\|_{L^1(\mathbb{R})}}{(Rt)^k}
		\leq 
		M^{-1}\left( \frac{1}{ct} \right)
		\sum_{k=0}^{n} 
		\frac{n!}{(n-k+1)!} \frac{\|\Phi_{k+1}\|_{L^1(\mathbb{R})}}{c_1^k}.
	\end{equation}
	Applying \eqref{eq:R_tM_M_inv}
	and \eqref{eq:R_sum_M_inv_sum} to
	the estimates \eqref{eq:del_phi_int_bound} and 
	\eqref{eq:phi_int_bound}, respectively, we see that 
	for all $x \in D(A)$,
	\[
	\|AT(t)x\| \leq (n+1)(\|I_1\| + \|I_2\|) \leq C_0 M^{-1}\left( \frac{1}{ct} \right)\|x\|,
	\]
	where
	\[
	C_0 \coloneqq  (n+1)!CC_T^2 + 
	2C_T^2
	\sum_{k=0}^{n} 
	\frac{ (n+1)!}{(n-k+1)!} \frac{\|\Phi_{k+1}\|_{L^1(\mathbb{R})}}{c_1^k}.
	\]
	Note that 
	$n$ depends only on the constant $\alpha$
	unlike in the Banach case; see 
	\eqref{eq:n_def_Banach} and \eqref{eq:n_def_Hilbert}.
	
	By \eqref{eq:M_poly_lower_bound} and 
	\eqref{eq:resolvent_decay_Hilbert}, we have
	$
	\|R(i\eta,A)\| = O(|\eta|^{-\alpha})
	$
	as $|\eta| \to \infty$.
	Corollary~\ref{coro:suff_id} shows 
	that $(T(t))_{t \geq 0}$ is immediately differentiable.
	Hence,
	$AT(t) \in \mathcal{L}(X)$ for all $t >0$.
	We conclude that the desired estimate
	\eqref{eq:semigroup_growth_Hilbert} holds,
	by using Lemma~\ref{lem:RSS22} in order
	to remove the constant $c$ from the estimate.
	
	Finally, in the case where $(T(t))_{t \geq 0}$ is not exponentially stable, an analogous argument to that in Section~\ref{sec:non_exp_case_Banach} applies. Therefore, we omit the details.
\end{proof}

\section{Necessity of positive increase}
\label{sec:nec_pi}

For a bounded $C_0$-semigroup $(T(t))_{t \geq 0}$,
the necessity of positive increase in the context of semi-uniform stability
was established in
\cite[Proposition~5.1]{Batty2016}
and 
\cite[Theorem~3.4]{Rozendaal2019}.
In this section, we show that the positive increase condition
in Theorem~\ref{thm:Hilbert_case} is also necessary,
provided that $M(s) = o(s)$ as $s \to \infty$ and that
the resolvent decay is governed
by the distance between $\sigma(A)$ and 
the set $\{i \eta: \eta \in \mathbb{R}
\text{~and~} |\eta| \geq s \}$.
The argument here 
is inspired by \cite[Theorem~3.4]{Rozendaal2019}.
\begin{proposition}
	\label{prop:necessity}
	Let $A$ be the generator of an
	immediately differentiable $C_0$-semigroup
	$(T(t))_{t \geq 0}$ on a Banach space $X$ such that $\sigma(A) \cap i \mathbb{R} \subset (-ib,ib)$ for some $b \geq 0$.
	Let $M \colon \mathbb{R}_+ \to (0,\infty)$
	be a non-decreasing continuous function 
	such that 
	$M(s) \to \infty$ as $s \to \infty$. Assume that
	there exist constants $\delta \in (0,1]$ and
	$\varepsilon \in (0,1)$
	such that 	for all $s \geq b$, 
	\begin{equation}
		\label{eq:M_ineq_delta_eps}
		M(s) \leq \delta \varepsilon s
	\end{equation}
	and 
	\begin{equation}
		\label{eq:M_R_ineq}
		\frac{\delta}{ M(s)} \leq
		\sup_{|\eta| \geq s} \frac{1}{\dist (i \eta , \sigma (A))}
		\leq 
		\sup_{|\eta| \geq s} \|R(i \eta,A)\| \leq \frac{1}{M(s)}.
	\end{equation}
	If there exists a constant $c >0$ such that 
	\begin{equation}
		\label{eq:AT_bound_nec}
		\|AT(t)\| = O \left(M^{-1} \left( \frac{1}{ct} \right) \right)
		\qquad \text{as $t \downarrow 0$},
	\end{equation}
	then $M$ has positive increase.
\end{proposition}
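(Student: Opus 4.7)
The plan is to combine an approximate-eigenvector lower bound on $\|AT(t)\|$, derived from points of $\partial\sigma(A)$ close to the imaginary axis, with the hypothesis $\|AT(t)\|\leq C_0 M^{-1}(1/(ct))$ for some constant $C_0>0$, extract a logarithmic ratio inequality for $M$, and bootstrap it to positive increase.

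First, for each sufficiently large $s$, I would use \eqref{eq:M_R_ineq} to find $\eta_s\in\mathbb{R}$ with $|\eta_s|\geq s$ and $\dist(i\eta_s,\sigma(A))\leq M(s)/\delta$ (up to arbitrarily small error). A closest point $\mu_s=\sigma_s+i\tau_s\in\sigma(A)$ to $i\eta_s$ lies on $\partial\sigma(A)$, and the hypothesis $M(s)\leq\delta\varepsilon s$ yields $|\tau_s|\geq(1-\varepsilon)s$ and $|\sigma_s|\leq M(s)/\delta$. Since boundary points of $\sigma(A)$ admit approximate eigenvectors, I choose $x_n\in D(A)$ with $\|x_n\|=1$ and $z_n\coloneqq(A-\mu_s)x_n\to 0$. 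Setting $y_n(t)\coloneqq T(t)x_n-e^{\mu_s t}x_n$, differentiation gives $y_n'(t)=\mu_s y_n(t)+T(t)z_n$ with $y_n(0)=0$, whence $\|y_n(t)\|\leq C_T\|z_n\|/|\re\mu_s|$. Together with $AT(t)x_n=\mu_sT(t)x_n+T(t)z_n$ and $n\to\infty$, this yields
\[
\|AT(t)\|\geq |\mu_s|\,e^{\re\mu_s\,t}\geq(1-\varepsilon)s\,e^{-M(s)t/\delta}\qquad\text{for every }t>0.
\]

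Next I would compare with $\|AT(t)\|\leq C_0 M^{-1}(1/(ct))$ by substituting $t=a/M(s)$ for a parameter $a>0$, giving $(1-\varepsilon)s\,e^{-a/\delta}\leq C_0 M^{-1}(M(s)/(ca))$. Applying the continuous non-decreasing $M$ to both sides (using $M(M^{-1}(u))=u$ from \eqref{eq:li_prop2}) and setting $\lambda\coloneqq C_0\,e^{a/\delta}/(1-\varepsilon)$, so that $a=\delta\log(\lambda(1-\varepsilon)/C_0)$, produces the key inequality
\[
\frac{M(\lambda s')}{M(s')}\geq c\delta\,\log\frac{\lambda(1-\varepsilon)}{C_0}
\]
for all $\lambda\geq C_0/(1-\varepsilon)$ and all sufficiently large $s'$.

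Finally, choosing $\lambda_1$ so that $\beta\coloneqq c\delta\log(\lambda_1(1-\varepsilon)/C_0)>1$ and iterating the preceding inequality $k$ times gives $M(\lambda_1^k s)/M(s)\geq\beta^k=\lambda_1^{k\alpha}$ with $\alpha\coloneqq\log\beta/\log\lambda_1>0$. Standard interpolation between consecutive powers $\lambda_1^k$ and $\lambda_1^{k+1}$ then upgrades this to $M(\lambda s)/M(s)\geq c_0\lambda^\alpha$ for all $\lambda\geq 1$ and $s$ sufficiently large, which is precisely positive increase. The main obstacle is the approximate-eigenvector construction in the first step: one must justify that the closest spectral point to $i\eta_s$ actually lies on $\partial\sigma(A)$ and execute the $n\to\infty$ limit cleanly so that $|\mu_s|e^{\re\mu_s\,t}$ genuinely controls the operator norm $\|AT(t)\|$ from below. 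Once this is secured, the logarithmic-to-polynomial bootstrap is the mechanism that converts the sublinearity assumption $M(s)\leq\delta\varepsilon s$ into genuine polynomial growth.
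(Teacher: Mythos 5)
Your proposal is correct and follows the same skeleton as the paper's proof: locate $\mu_s\in\sigma(A)$ with $|\mu_s|\geq(1-\varepsilon)s$ (up to harmless constants) and $|\re\mu_s|\lesssim M(s)/\delta$ using \eqref{eq:M_R_ineq} and \eqref{eq:M_ineq_delta_eps}, deduce the lower bound $\|AT(t)\|\geq|\mu_s|e^{t\re\mu_s}$, play it against the hypothesis \eqref{eq:AT_bound_nec} at $t\asymp 1/M(s)$ to obtain $M(\lambda s)/M(s)\gtrsim\log\lambda$, and conclude positive increase. The two places where you diverge are both substitutions of direct proofs for citations. For the spectral lower bound, the paper invokes $\mu e^{\mu t}\in\sigma(AT(t))$ from \cite[Lemma~2.4.6]{Pazy1983} together with the bound of the spectral radius by the operator norm; your approximate-eigenvector argument proves exactly this inclusion in the case needed, and the step you flagged as the main obstacle is unproblematic: the nearest point of the closed set $\sigma(A)$ to $i\eta_s\notin\sigma(A)$ exists (intersect $\sigma(A)$ with a large closed disc and use compactness) and is necessarily a boundary point, hence an approximate eigenvalue of the closed operator $A$ by the standard blow-up of $\|R(\cdot,A)\|$ near $\partial\sigma(A)$. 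Note only that your estimate $\|y_n(t)\|\leq C_T\|z_n\|/|\re\mu_s|$ presupposes $\re\mu_s<0$, which does hold for $s$ large by Theorem~\ref{thm:spectral_prop}; in any case the cruder bound $\|y_n(t)\|\leq C_T t e^{|\re\mu_s|t}\|z_n\|$ already gives $y_n(t)\to0$ at fixed $t$, which is all the limit requires. For the final step, the paper derives the same logarithmic ratio inequality uniformly in $\lambda\geq1$ (by fixing $t=1/(c\delta N(s))$ independently of $\lambda$) and then cites \cite[Lemma~2.1]{Rozendaal2019}, whereas you reprove that lemma by iteration between powers of a fixed $\lambda_1$ and interpolation; since you fix $\lambda_1$ before iterating, the fact that your threshold on $s$ depends on $\lambda$ causes no difficulty. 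Both routes are sound; yours is more self-contained, the paper's is shorter by outsourcing the two standard facts.
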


\begin{proof}
	Define $N\colon [b,\infty) \to (0,\infty)$ by
	\[
	N(s) \coloneqq \frac{1}{\displaystyle 
		\sup_{|\eta| \geq s} 1/\dist (i \eta , \sigma (A))} =
	\inf_{|\eta| \geq s} \dist (i \eta , \sigma (A)).
	\]
	We divide the proof into three steps.
	First, we find a point in $\sigma(A)$ satisfying
	certain inequalities. Next, we investigate the relation between
	$M^{-1}$ and $N$. Finally, using this relation, we show that $M$
	has positive increase.
	
	Step~1.
	Since
	$
	N(s) \leq M(s)/\delta
	$
	for all $s \geq b$ by \eqref{eq:M_R_ineq},
	the inequality \eqref{eq:M_ineq_delta_eps} yields
	\begin{equation}
		\label{eq:wN_bound_13}
		N(s) \leq \varepsilon s
	\end{equation}
	for all $s \geq b$. By Theorem~\ref{thm:spectral_prop},
	there exists $s_0 \geq b$ such that 
	\[
	\sup\{
	\re z :
	z \in \sigma(A) \text{~and~} |z| \geq (1-\varepsilon)^2s_0
	\} < 0.
	\]
	We will show that for all $s \geq s_0$,
	there exist $u_s,v_s \in \mathbb{R}$ with
	$u_s +iv_s  \in \sigma(A)$
	such that 
	\begin{subequations}
		\label{eq:alp_beta_bound}
		\begin{empheq}[left = {\empheqlbrace \,}, right = {}]{align}
			|u_s | &\leq (2-\varepsilon) N(s), \label{eq:alp_beta_bound1}\\
			(1-\varepsilon)^2s &\leq |u_s  + iv_s | , \quad \text{and}
			\label{eq:alp_beta_bound2} \\
			u_s &< 0. \label{eq:alp_beta_bound3}
		\end{empheq}
	\end{subequations}
	
	For each $s \geq  s_0$,
	there exist $\eta_s\in \mathbb{R}$ with $|\eta_s| \geq s$
	and $u_s,v_s \in \mathbb{R}$ with
	$u_s +iv_s  \in \sigma(A)$ such that
	\[
	N(s) \leq |i \eta_s - (u_s +iv_s )| \leq (2-\varepsilon) N(s).
	\]
	Hence, we obtain \eqref{eq:alp_beta_bound1}.
	Moreover, \eqref{eq:wN_bound_13} yields
	\begin{equation}
		\label{eq:2/3_bound}
		|i \eta_s  - (u_s +iv_s )| \leq \varepsilon(2-\varepsilon) s.
	\end{equation}
	By $|\eta_s | \geq s$, we have
	\begin{equation}
		\label{eq:s_us_vs_bound}
		s - |u_s  + i v_s | \leq |\eta_s | - |u_s  + i v_s | \leq 
		|i \eta_s - (u_s  + i v_s ) |.
	\end{equation}
	From \eqref{eq:2/3_bound} and \eqref{eq:s_us_vs_bound},
	it follows that 
	\eqref{eq:alp_beta_bound2} holds.
	Since $u_s +iv_s  \in \sigma(A)$ 
	satisfies 
	$|u_s+iv_s| \geq (1-\varepsilon)^2s_0$, we also obtain
	\eqref{eq:alp_beta_bound3}.

	Step~2.
	Let $s \geq s_0$ and take 
	$u_s,v_s \in \mathbb{R}$ with
	$u_s +iv_s  \in \sigma(A)$ such that 
	\eqref{eq:alp_beta_bound1}--\eqref{eq:alp_beta_bound3} hold.
	By \cite[Lemma~2.4.6]{Pazy1983}, we obtain
	$z e^{z t} \in \sigma(AT(t))$
	for all $z \in \sigma(A)$ and $t >0$.
	Therefore,
	\[
	(u_s + i v_s) e^{(u_s+iv_s)t} \in \sigma(AT(t))
	\]
	for all $t >0$.
	Since $AT(t)$ is a bounded operator on $X$ for all $t>0$,
	we have
	\[
	|u_s+iv_s| e^{u_s t} \leq 
	\sup_{z \in \sigma(AT(t))} |z|
	\leq
	\|AT(t)\|
	\]
	for all $t >0$.
	By assumption~\eqref{eq:AT_bound_nec},
	there exist $C>0$ and $t_0 \in (0,1/(cM(0))]$ such that 
	for all $t \in (0,t_0]$,
	\begin{equation}
		\label{eq:M_inv_bound}
		|u_s+iv_s| e^{u_s t} 
		\leq 
		CM^{-1} \left(
		\frac{1}{ct}
		\right).
	\end{equation}
	
	Let $0 < t \leq t_0$.
	By \eqref{eq:alp_beta_bound1} and \eqref{eq:alp_beta_bound3}, we obtain
	\begin{equation}
		\label{eq:at_wideNt}
		-u_s t = |u_s| t \leq (2-\varepsilon) N(s)t.
	\end{equation}
	Moreover, \eqref{eq:alp_beta_bound2} yields
	\begin{equation}
		\label{eq:ab_exp_lower}
		(1-\varepsilon)^2se^{u_s t}  \leq 
		|u_s + iv_s | e^{u_s t}.
	\end{equation}
	Combining \eqref{eq:ab_exp_lower} with \eqref{eq:M_inv_bound}, we obtain
	\begin{equation}
		\label{eq:eu_s_M_inv_bound}
		(1-\varepsilon)^2se^{u_s t}  \leq 
		CM^{-1} \left(
		\frac{1}{ct}
		\right).
	\end{equation}
	By \eqref{eq:at_wideNt} and 
	\eqref{eq:eu_s_M_inv_bound}, 
	\begin{equation}
		\label{eq:log_tildeN}
		\log\left(
		\frac{	(1-\varepsilon)^2s}{C M^{-1}(1/(ct))}
		\right) \leq 
		-u_st \leq 
		(2-\varepsilon) N(s)t.
	\end{equation}
	
	Step~3.
	Since $M(s) \to \infty$ as $s \to \infty$ and since
	$N(s) \geq M(s)$ for all $s \geq b$ by \eqref{eq:M_R_ineq},
	there exists $s_1 \geq b$ such that
	$N(s) \geq 1/(c\delta t_0)$ 
	for all $s \geq s_1$.
	Let $\lambda \geq 1$ and 
	$s \geq \max\{s_0, \, s_1\}$. Set
	\[
	t \coloneqq \frac{1}{c \delta N (s)}.
	\]
	Then $t \leq t_0$. 
	By \eqref{eq:log_tildeN}, we obtain
	\begin{align}
		\label{eq:N_frac_log}
		\frac{2-\varepsilon}{c\delta} \frac{N(\lambda s)}{N(s)}
		\geq 
		\log\left(
		\frac{(1-\varepsilon)^2\lambda s}{CM^{-1} (\delta N(s))}
		\right).
	\end{align}
	Since $\delta N(s) \leq M(s)$ by \eqref{eq:M_R_ineq} 
	and since $M^{-1}$
	is non-decreasing, 
	the property \eqref{eq:li_prop1} of left-inverses gives
	\[
	M^{-1} (\delta N(s)) \leq 
	M^{-1} (M(s)) \leq s.
	\]
	Applying this to \eqref{eq:N_frac_log}, we obtain
	\begin{equation}
		\label{eq:N_lower_bound}
		\frac{2-\varepsilon}{c\delta} \frac{N(\lambda s)}{N(s)}
		\geq
		\log \left(
		\frac{(1-\varepsilon)^2\lambda}{C}
		\right).
	\end{equation}
	Since $M(s) \leq N(s)$ and 
	$N(\lambda s) \leq M(\lambda s)/\delta$ 
	by \eqref{eq:M_R_ineq},  the estimate \eqref{eq:N_lower_bound} yields
	\[
	\frac{M(\lambda s)}{M(s)}
	\geq
	\frac{c\delta^2}{2-\varepsilon}
	\log \left(
	\frac{(1-\varepsilon)^2\lambda}{C}
	\right).
	\]
	The term on the right-hand side above
	is greater than $1$ for $\lambda$ sufficiently large.
	Thus, $M$ has positive increase by 
	\cite[Lemma~2.1]{Rozendaal2019}.
\end{proof}

\begin{remark}
	\label{rem:M_s_infty_prop}
	By \eqref{eq:M_R_ineq} and \eqref{eq:alp_beta_bound1}--\eqref{eq:alp_beta_bound3}, 
	we can obtain
	$M(s) \to \infty$ as $s \to \infty$. 
	Therefore, the assumption that $M(s) \to \infty$ as $s \to \infty$ can be omitted from Proposition~\ref{prop:necessity}.
	Indeed, let $u_s,v_s \in \mathbb{R}$ with
	$u_s +iv_s  \in \sigma(A)$
	satisfy
	\eqref{eq:alp_beta_bound1}--\eqref{eq:alp_beta_bound3}.
	By Theorem~\ref{thm:spectral_prop},
	there exist constants $p,q >0$ such that 
	$|\im z| \leq pe^{-q \re z}$ for all $z 
	\in \sigma(A)$.
	Hence, \eqref{eq:alp_beta_bound2} 
	and \eqref{eq:alp_beta_bound3} give
	\[
	(1-\varepsilon)^2 s \leq |u_s| + p e^{q|u_s|}.
	\]
	Since 
	\[
	\inf\{ \xi \geq 0 :  (1-\varepsilon)^2s \leq 
	\xi + pe^{q\xi} \} \to \infty
	\qquad \text{as~}s \to \infty,
	\]
	it follows that $|u_s| \to \infty$ as $s \to \infty$.
	Therefore, \eqref{eq:M_R_ineq} and \eqref{eq:alp_beta_bound1} imply that 
	$M(s)\to \infty$ as $s \to \infty$.
\end{remark}

\section{Growth of quasi-multiplication semigroups}
\label{sec:multiplication}
In this section, we present a sharper result
for the following special class of $C_0$-semigroups:
We say that an immediately differentiable
$C_0$-semigroup $(T(t))_{t \geq 0}$
on a Banach space with generator $A$ is of {\em 
	quasi-multiplication type} if
\begin{equation}
	\label{eq:multi_AT}
	\|AT(t)\| = \sup_{z \in \sigma(A)} |z| e^{t \re z}
\end{equation}
for all $t > 0$ and
\begin{equation}
	\label{eq:multi_R}
	\|R(z,A)\| = \sup_{\lambda \in \sigma(A)} \frac{1}{|z-\lambda|}
\end{equation}
for all $z \in \varrho(A) \cap i \mathbb{R}$.
The term
{\em quasi-multiplication $C_0$-semigroup}
was introduced in \cite[Section~5.1]{Batty2016} to study semi-uniform stability,
and  was
subsequently adopted in \cite[Section~4]{Rozendaal2019}.
Our definition of $C_0$-semigroups of 
quasi-multiplication type is
tailored to the analysis of the growth rate of $\|AT(t)\|$
as $t \downarrow 0$, and hence differs slightly from
the definitions given in \cite{Batty2016,Rozendaal2019}.

Given a constant $b >0$ and 
a non-decreasing function $M \colon [b,\infty) \to (0,\infty)$, define the function $M_{\inf}$ by
\begin{equation}
	\label{eq:Minf_def}
	M_{\inf} (s) \coloneqq \inf_{\lambda >1} \frac{M(\lambda s)}{\log \lambda},\quad s \geq b.
\end{equation}
To avoid having $M_{\inf}(0) = 0$,
here we do not consider the case $b=0$
unlike in the previous sections. 
The following theorem gives lower and upper estimates for
$\|AT(t)\|$.
We use statement a) to guarantee the existence of the left-inverse $M_{\inf}^{-1}$, and
the proof of statement~b) builds on the argument given in
\cite[Theorem~4.4]{Rozendaal2019}.
\begin{theorem}
	\label{thm:mul_lower_bound}
	Let $A$ be the generator of an immediately differentiable
	$C_0$-semigroup $(T(t))_{t \geq 0}$ of quasi-multiplication type
	on a Banach space $X$.
	Let $b > 0$ satisfy $\sigma(A) \cap i \mathbb{R} \subset 
	(-ib,ib)$, and
	define $M \colon [b,\infty) \to (0,\infty)$ by
	\begin{equation}
		\label{eq:M_def_multiplication}
		M(s) \coloneqq \frac{1}{\sup_{|\eta| \geq s} \|R(i \eta, A)\|}.
	\end{equation}
	If $M(s) = o(s)$  as $s \to \infty$,
	then
	the following statements hold:
	\begin{enumerate}
		\renewcommand{\labelenumi}{\rm{\alph{enumi})}}
		\item
		The function
		$M_{\inf}$ defined by \eqref{eq:Minf_def} is  non-decreasing 
		and continuous on $[b, \infty)$.
		Moreover,
		$M_{\inf}(s) \in (0,\infty)$ for all $s \in [b,\infty)$ and
		$M_{\inf}(s) \to \infty$ as $s \to \infty$.
		\item 
		For all $\varepsilon \in(0,1)$, there exists a constant $t_0 \in (
		0,1/M_{\inf}(b)
		]$ such that
		\[
		(1-\varepsilon)M_{\inf}^{-1}\left( \frac{1}{t}\right) \leq 
		\|AT(t)\| \leq (1+\varepsilon)M_{\inf}^{-1}\left( \frac{1}{t}\right)
		\]
		for all $t \in (0,t_0]$.
	\end{enumerate}
\end{theorem}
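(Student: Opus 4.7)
The plan is to exploit the algebraic identities \eqref{eq:multi_AT} and \eqref{eq:multi_R}, which reduce $\|AT(t)\|$ to a supremum over $\sigma(A)$ and $M(s)$ to the distance $\inf_{|\eta| \geq s} \dist(i\eta, \sigma(A))$, and to pass between them through the left-inverse identity $M_{\inf}^{-1}(1/t) = \sup_{\sigma \geq b} \sigma e^{-tM(\sigma)}$, which I would verify directly from \eqref{eq:Minf_def} via the substitution $\sigma = \lambda s$. A crucial preliminary fact is that $M(\sigma)/\log\sigma \to \infty$ as $\sigma \to \infty$. This follows from Theorem~\ref{thm:spectral_prop}: immediate differentiability gives, for every $c > 0$, a constant $p$ such that $|\re z| \geq c\log(|\im z|/p)$ for all $z \in \sigma(A)$ with $|\im z|$ large. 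A short case-split on whether $z \in \sigma(A)$ with $|i\eta - z|$ near $\dist(i\eta, \sigma(A))$ has $|\im z|$ close to $|\eta|$ or not then yields $\dist(i\eta, \sigma(A)) \geq c \log|\eta|$, i.e., $M(s) \geq c\log s$ for $s$ large, with $c$ arbitrary.

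For statement a), monotonicity of $M_{\inf}$ is immediate since $M$ is non-decreasing. For each fixed $s \geq b$, the function $\lambda \mapsto M(\lambda s)/\log\lambda$ tends to infinity both as $\lambda \downarrow 1$ (because $M(\lambda s) \geq M(s) > 0$ while $\log\lambda \to 0^+$) and as $\lambda \to \infty$ (by the preliminary fact), so $M_{\inf}(s) \in (0, \infty)$. Divergence $M_{\inf}(s) \to \infty$ as $s \to \infty$ follows from the lower bound $M_{\inf}(s) \geq \inf_{\sigma > s} M(\sigma)/\log\sigma$, valid for $s > 1$. Continuity of $M_{\inf}$ is the subtlest point: upper semi-continuity is automatic from the infimum definition, and lower semi-continuity I would extract from the left-inverse identity, which exhibits $M_{\inf}^{-1}$ as continuous and strictly increasing and therefore forces $M_{\inf}$ itself to be continuous.

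For the upper bound in statement b), I apply \eqref{eq:multi_AT} and analyze $|z|e^{t\re z}$ for $z = u + iv \in \sigma(A)$ with $|v| \geq b$. From \eqref{eq:multi_R} and \eqref{eq:M_def_multiplication} we have $|u| \geq M(|v|)$. A calculus optimization in $|u|$, combined with $M(s) = o(s)$, bounds $(|u|+|v|)e^{-t|u|}$ by $(1+o(1))|v|e^{-tM(|v|)}$ as $t \downarrow 0$; taking the supremum over $|v| \geq b$ and applying the left-inverse identity gives $\|AT(t)\| \leq (1+\delta)M_{\inf}^{-1}(1/t)$. For the lower bound, given any $s^- < s_0 \coloneqq M_{\inf}^{-1}(1/t)$ the definition of the left-inverse provides $\sigma > s^-$ with $\sigma e^{-tM(\sigma)} > s^-$. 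Since $M(\sigma) = \inf_{|\eta|\geq\sigma}\dist(i\eta,\sigma(A))$, I can choose sequences $\eta_n$ with $|\eta_n|\geq\sigma$ and $z_n = u_n + iv_n \in \sigma(A)$ with $|i\eta_n - z_n| \to M(\sigma)$; these satisfy $|u_n| \leq M(\sigma) + o(1)$ and $|v_n| \geq \sigma - M(\sigma) - o(1)$, so $|z_n|e^{t\re z_n} \geq |v_n|e^{-t|u_n|} \geq (1-o(1))\sigma e^{-tM(\sigma)} > (1-o(1))s^-$. Sending $s^- \uparrow s_0$ and refining the error yields $\|AT(t)\| \geq (1-\varepsilon)M_{\inf}^{-1}(1/t)$ for $t$ sufficiently small.

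The main obstacles are twofold. First, the continuity of $M_{\inf}$ in part a) must be handled carefully because $M$ itself may be discontinuous, so routing continuity through $M_{\inf}^{-1}$ and the identity $M_{\inf}^{-1}(r) = \sup_{\sigma \geq b} \sigma e^{-M(\sigma)/r}$ seems cleaner than a direct attack. Second, in the lower bound of part b), realizing $M_{\inf}^{-1}(1/t)$ by a concrete spectral point requires a double limit (over $s^- \uparrow s_0$ and over near-optimal approximating sequences for $M(\sigma)$), whose uniform control is essential for obtaining the $(1-\varepsilon)$ factor. Both are technical but conceptually routine once the translations between $M$, $M_{\inf}$, and the spectrum are made explicit via the identities above.
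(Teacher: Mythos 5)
Your proposal is correct in substance, but it reaches the result by a genuinely different route from the paper, and in places a cleaner one. The paper never uses the identity $M_{\inf}^{-1}(1/t)=\sup_{\sigma\geq b}\sigma e^{-tM(\sigma)}$; it proves only the inequality $\sup_{\sigma\geq b}\sigma e^{-tM(\sigma)}\leq M_{\inf}^{-1}(1/t)$ for the upper bound, and obtains the lower bound and the divergence $M_{\inf}(s)\to\infty$ from the semigroup side, by first showing $se^{-M(s)/t}\leq K_{\varepsilon}(t):=\|AT(1/t)\|/(1-\varepsilon)$ via an exact minimizer of $\dist(i\eta,\sigma(A))$ (its Lemma~\ref{lem:inf_dist_point}), then proving that $K_{\varepsilon}$ is eventually non-decreasing and left-continuous so that $M_{\inf}\geq K_{\varepsilon}^{-1}$ can be inverted. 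You instead get $\log s=o(M(s))$ directly from Theorem~\ref{thm:spectral_prop} (which is valid: the case split on whether $|\im z|\leq|\eta|/2$ works, since $q>0$ is arbitrary there), derive the lower bound by exhibiting near-optimal spectral points realizing $\sup_{\sigma}\sigma e^{-tM(\sigma)}$, and prove continuity of $M_{\inf}$ through strict monotonicity of $r\mapsto\sup_{\sigma}\sigma e^{-M(\sigma)/r}$ (strictness follows from $M\geq M(b)>0$, and the identity with the left-inverse can be established without presupposing continuity via the equivalence $M_{\inf}(s)\geq r\iff\sigma e^{-M(\sigma)/r}\leq s$ for all $\sigma>s$). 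What your approach buys is the elimination of the monotonicity/left-continuity analysis of $K_{\varepsilon}$ and of the paper's direct $\varepsilon$--$\delta$ proof of continuity of $M_{\inf}$ (Lemma~\ref{lem:M_inf_prop1}.e)); what it costs is the double limit in the lower bound and a more careful statement of the Galois-type identity. Two loose ends to tighten when writing this up: the pointwise claim $(|u|+|v|)e^{-t|u|}\leq(1+o(1))|v|e^{-tM(|v|)}$ is not uniform over $\sigma(A)$ --- the contribution $|u|e^{-t|u|}\leq 1/(et)$, together with the spectral points having $|\im z|<b$ or $\re z$ close to $0$, must be absorbed separately using $1/t=o\bigl(M_{\inf}^{-1}(1/t)\bigr)$ (which follows from $M_{\inf}(s)\leq M(es)=o(s)$ and Lemma~\ref{lem:left_inv_small_o}), exactly as the paper does with its sets $\Omega_1,\Omega_2$; and in the lower bound you should note that the near-optimal spectral points eventually have negative real part (or observe that $\re z\geq 0$ only improves the bound), which the paper secures via the compactness of $\sigma(A)\cap\overline{\mathbb{C}}_+$.
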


First, we investigate the properties of $M_{\inf}$
in Section~\ref{sec:prop_Minf}, which
covers most of the proof of statement a) of 
Theorem~\ref{thm:mul_lower_bound}. Next, in
Section~\ref{sec:estimate_AT_qm}, we derive 
the upper and lower bounds for $\|AT(t)\|$ in statement b) 
to complete the proof 
of Theorem~\ref{thm:mul_lower_bound}. 
\subsection{Properties of $M_{\inf}$}
\label{sec:prop_Minf}
In the following lemma, we collect the basic properties of $M_{\inf}$ that will be used 
in the proof of Theorem~\ref{thm:mul_lower_bound}.
\begin{lemma}
	\label{lem:M_inf_prop1}
	Let $b > 0$ and 
	let $M \colon [b,\infty) \to (0,\infty)$ 
	be a non-decreasing function.
	Then
	the function 
	$M_{\inf}$ defined by \eqref{eq:Minf_def}
	satisfies the following properties:
	\begin{enumerate}
		\renewcommand{\labelenumi}{\rm{\alph{enumi})}}
		\item $M_{\inf}$ is non-decreasing.
		\item 
		$M_{\inf}(s) \to \infty$ as $s \to \infty$ if and only if
		$\log s = o(M(s))$ as  $s \to \infty$.
		
		\item 
		If $\log s = O(M(s))$ as $s \to \infty$, then
		$M_{\inf}(s) \in (0,\infty)$ for all $s \geq b$.
		
		\item	If $M$ is right-continuous, then
		$M_{\inf}$ is also right-continuous.
		\item
		If $M$ is continuous and satisfies
		$\log s = o(M(s))$ as $s \to \infty$, then
		$M_{\inf}$ is also continuous.
	\end{enumerate}
\end{lemma}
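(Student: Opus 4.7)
My plan is the following. Statement (a) is immediate from the monotonicity of $M$: for $b\le s_1\le s_2$ and $\lambda>1$, $M(\lambda s_1)\le M(\lambda s_2)$, and passing to the infimum over $\lambda$ preserves the inequality. For (c), a finite upper bound comes from $M_{\inf}(s)\le M(2s)/\log 2$, while strict positivity is checked by splitting $\lambda\in(1,\infty)$ into three ranges. As $\lambda\to 1^+$ the ratio blows up since $M(\lambda s)\ge M(s)>0$ and $1/\log\lambda\to\infty$; for large $\lambda$ the hypothesis $\log s=O(M(s))$ furnishes $c>0$ and $s_0\ge b$ with $M(\lambda s)\ge c\log(\lambda s)$, so $M(\lambda s)/\log\lambda \ge c(1+\log s/\log\lambda)$, which is bounded below by a positive constant once $\lambda$ is sufficiently large in terms of $s$; on the intermediate compact range, monotonicity of $M$ gives a uniform positive lower bound.

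For (b), I would prove both implications separately. For the direction $\log s=o(M(s))\Rightarrow M_{\inf}(s)\to\infty$, given $K>0$ I pick $s_1\ge\max\{b,1\}$ with $M(s)\ge 2K\log s$ for $s\ge s_1$; then for $s\ge s_1$ and $\lambda>1$, the inequality $\log(\lambda s)\ge\log\lambda$ yields $M(\lambda s)/\log\lambda\ge 2K\log(\lambda s)/\log\lambda\ge 2K$, hence $M_{\inf}(s)\ge 2K$. Conversely, $M_{\inf}(s_0)\ge 2K$ means $M(u)\ge M_{\inf}(s_0)\log(u/s_0)$ for every $u>s_0$ (substitute $\lambda=u/s_0$ in the defining infimum), so $M(u)\ge 2K(\log u-\log s_0)\ge K\log u$ once $u\ge s_0^2$.

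Statement (d) follows from monotonicity and one careful choice of near-optimal $\lambda$: by (a) the right limit at $s_0$ is $\ge M_{\inf}(s_0)$; for the reverse bound, fix $\varepsilon>0$, pick $\lambda^*>1$ with $M(\lambda^* s_0)/\log\lambda^*\le M_{\inf}(s_0)+\varepsilon$, and use $M_{\inf}(s)\le M(\lambda^* s)/\log\lambda^*$ for $s\ge s_0$ together with the right-continuity of $M$ at $\lambda^* s_0$ to pass $s\downarrow s_0$.

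The main obstacle is (e): left-continuity does not follow from monotonicity alone because the optimizing $\lambda$ depends on $s$. I plan a contradiction argument. Suppose $s_n\uparrow s_0$ and $\lambda_n>1$ satisfy $M(\lambda_n s_n)/\log\lambda_n<M_{\inf}(s_0)-\varepsilon$. The boundary estimates from the proof of (c)—sharpened by the fact that $\log s=o(M(s))$ forces $M(\lambda s_n)/\log\lambda\to\infty$ as $\lambda\to\infty$, exactly as in the proof of (b)—confine $\{\lambda_n\}$ to a compact subinterval of $(1,\infty)$. Extracting a convergent subsequence $\lambda_{n_k}\to\lambda^*\in(1,\infty)$ and applying continuity of $M$ at $\lambda^* s_0$ produces $M(\lambda^* s_0)/\log\lambda^*\le M_{\inf}(s_0)-\varepsilon$, contradicting the definition of $M_{\inf}(s_0)$.
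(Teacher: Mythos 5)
Your proposal is correct and follows essentially the same route as the paper: monotonicity for (a), comparison of $M(\lambda s)/\log\lambda$ with $\log(\lambda s)/\log\lambda$ for (b) and (c), a near-optimal $\lambda^\ast$ combined with right-continuity of $M$ for (d), and for (e) the same key observation that the near-minimizing values of $\lambda$ are confined to a compact subinterval $[\lambda_1,\lambda_2]\subset(1,\infty)$ by the lower bounds $M(\lambda\sigma)/\log\lambda\geq M(b)/\log\lambda$ near $\lambda=1$ and $M(\lambda\sigma)/\log\lambda\geq M(\lambda b)/\log\lambda\to\infty$ for large $\lambda$. The only cosmetic differences are that the paper proves (c) by contradiction with a minimizing sequence and exploits the compactness in (e) via uniform continuity of $M$ on $[\lambda_1 b,\lambda_2 s]$ rather than via extraction of a convergent subsequence.
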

\begin{proof}
	a) Let $s_2 \geq s_1 \geq b$.
	Since $M$ is non-decreasing, we have
	\[
	\frac{M(\lambda s_1)}{\log \lambda} \leq 
	\frac{M(\lambda s_2)}{\log \lambda}
	\]
	for all $\lambda > 1$.
	Hence,
	\[
	M_{\inf}(s_1) = \inf_{\lambda > 1}\frac{M(\lambda s_1)}{\log \lambda} \leq 
	\frac{M(\mu s_2)}{\log \mu}
	\]
	for all $\mu > 1$.
	This gives
	$M_{\inf}(s_1)  \leq M_{\inf}(s_2)$. Therefore,
	$M_{\inf}$ is non-decreasing.
	
	b)
	Suppose that $M_{\inf}(s) \to \infty$ as $s \to \infty$.
	Let $\varepsilon >0$ be arbitrary.
	Then there exists $s_0 \geq b$ such that 
	\[
	\inf_{\lambda>1} \frac{M(\lambda s_0)}{\log \lambda}=
	M_{\inf}(s_0) 
	\geq \frac{2}{\varepsilon}.
	\]
	Set $s_1 \coloneqq \max\{s_0,\,2\}$.
	For all $\lambda \geq s_1$,
	\[
	\frac{M(\lambda s_1)}{\log (\lambda s_1)} \geq 
	\frac{M(\lambda s_0)}{2\log \lambda } \geq \frac{1}{\varepsilon}.
	\]
	For all $s \geq  s_1^2$, we let $\lambda = s/s_1$ and derive
	\[
	\log s \leq \varepsilon M(s).
	\]
	Since $\varepsilon>0$ is arbitrary, 
	it follows that $\log s = o(M(s))$ as $s \to \infty$.
	
	Conversely,
	suppose that $\log s = o(M(s))$ as  $s \to \infty$.
	Let $b_1 \coloneqq \max \{b,\,2 \}$ and let 
	$f \colon [b_1,\infty) \to (0,\infty)$ be defined by
	\[
	f(s) \coloneqq \frac{\log s}{M(s)}.
	\]
	Then $f(s) \to 0$
	as $s \to \infty$.
	Let $\varepsilon >0$ be arbitrary.
	There exists $s_1 \geq  b_1$ such that 
	$f(s) \leq \varepsilon$ for all $s \geq s_1$. 
	For all $\lambda >1$ and $s \geq s_1$, 
	\[
	\frac{M(\lambda s)}{\log \lambda}
	=
	\frac{\log(\lambda s) / f(\lambda s)}{\log \lambda}
	\geq \frac{1}{f(\lambda s)} \geq \frac{1}{\varepsilon}.
	\]
	This implies that 
	\[
	M_{\inf} (s) \geq \frac{1}{\varepsilon}
	\]
	for all $s \geq s_1$.
	Since $\varepsilon>0$ is arbitrary,
	$M_{\inf}(s) \to \infty$ as $s \to \infty$.

	c)
	Assume, to reach a contradiction, that $M_{\inf}(s_0) = 0$ for some $s_0 \geq b$.
	Then there exists a sequence $(\lambda_n)_{n \in \mathbb{N}}$ in $(1,\infty)$
	such that 
	\begin{equation}
		\label{eq:M_log_con}
		\lim_{n \to \infty}
		\frac{M(\lambda_n s_0)}{\log\lambda_n} =0.
	\end{equation}
	We have $\sup_{n \in \mathbb{N}} \lambda_n = \infty$,
	and hence
	there exists a subsequence $(\lambda_{n_k})_{k \in \mathbb{N}}$
	such that $\lambda_{n_k}\to \infty$ as $k \to \infty$.
	On the other hand, since $\log s = O(M(s))$ as $s \to \infty$,
	there exist $C>0$ and $s_1 \geq \max\{b,\, 2\}$ such that 
	$\log s \leq CM(s)$ for all $s \geq s_1$.
	Hence, for all $k \in \mathbb{N}$ satisfying $\lambda_{n_k} s_0 \geq s_1$,
	\[
	\frac{M(\lambda_{n_k} s_0)}{\log \lambda_{n_k}} \geq 
	\frac{\log \lambda_{n_k}+ \log s_0}{C\log \lambda_{n_k}},
	\]
	and the term on the right-hand side converges to $1/C$
	as $k \to \infty$. This contradicts \eqref{eq:M_log_con}.
	Thus, $M_{\inf}(s) \in (0,\infty)$ for all $s \geq b$.
	
	d)
	Let $\sigma \geq s \geq b$ and $\varepsilon >0$.
	There exists $\lambda_0 >1$ such that 
	\begin{equation}
		\label{eq:Minf_bound1}
		\frac{M(\lambda_0 s)}{\log \lambda_0} - \frac{\varepsilon}{2}
		\leq \inf_{\lambda >1} \frac{M(\lambda s)}{\log \lambda}
		=
		M_{\inf} (s).
	\end{equation}
	We also have
	\begin{equation}
		\label{eq:Minf_bound2}
		M_{\inf} (\sigma) = 
		\inf_{\lambda >1} \frac{M(\lambda \sigma)}{\log \lambda}
		\leq \frac{M(\lambda_0 \sigma)}{\log \lambda_0}.
	\end{equation}
	Since $M$ is right-continuous, 
	there exists $\delta>0$
	such that 
	\begin{equation}
		\label{eq:Minf_bound3}
		|M(\lambda_0 \sigma) - M(\lambda_0 s)| \leq \frac{\varepsilon \log \lambda_0}{2}
	\end{equation}
	whenever $\sigma \in [s,s+\delta )$.
	Combining the estimates \eqref{eq:Minf_bound1}--\eqref{eq:Minf_bound3} 
	with the non-decreasing property of $M_{\inf}$, we see 
	that if $\sigma \in [s,s+\delta )$, then
	\[
	\frac{M(\lambda_0 s)}{\log \lambda_0} - 
	\frac{\varepsilon}{2} \leq 
	M_{\inf}(s) \leq 
	M_{\inf}(\sigma) \leq 
	\frac{M(\lambda_0 \sigma)}{\log \lambda_0} 
	\leq 
	\frac{M(\lambda_0 s)}{\log \lambda_0} + 
	\frac{\varepsilon}{2}.
	\]
	Therefore, $|M_{\inf}(\sigma) - M_{\inf}(s)| \leq \varepsilon$.
	This implies that $M_{\inf}$ is right-continuous.
	
	e)
	By statement d), it is enough
	to prove that $M_{\inf}$ is left-continuous.
	Let $s >b$.
	We first show that
	there exist $\lambda_1,\lambda_2 >1$ such that 
	for all $\sigma \in  [b,s]$,
	\begin{equation}
		\label{eq:Minf_min}
		M_{\inf}(\sigma) = \min_{\lambda_1 \leq \lambda \leq \lambda_2}
		\frac{M(\lambda \sigma)}{\log \lambda} .
	\end{equation}
	
	Define 
	\[
	\lambda_1 \coloneqq  \exp \left(
	\frac{M(b)}{M_{\inf}(s)+1}
	\right).
	\]
	Then, for all $\lambda \in (1,\lambda_1]$,
	\begin{equation}
		\label{eq:M_lower_lam1_1}
		\frac{M(b)}{\log \lambda} \geq M_{\inf}(s)+1.
	\end{equation}
	For all $\sigma \in [b,s]$ and $\lambda > 1$,
	we also have
	\begin{equation}
		\label{eq:M_lower_lam1_2}
		\frac{M(\lambda \sigma)}{\log \lambda}
		\geq \frac{M(b)}{\log \lambda}.
	\end{equation}
	By \eqref{eq:M_lower_lam1_1} and 
	\eqref{eq:M_lower_lam1_2},
	\begin{equation}
		\label{eq:inf_lambda1}
		\inf_{1 < \lambda \leq \lambda_1}
		\frac{M(\lambda \sigma)}{\log \lambda} 
		\geq  M_{\inf}(s) + 1\quad 
		\text{for all $\sigma \in [b,s]$.}
	\end{equation}
	In addition, 
	if we define $g\colon (1,\infty) \to (0,\infty)$ by
	\begin{equation}
		\label{eq:g_def_Minf}
		g(\lambda) \coloneqq \frac{M(\lambda b)}{\log \lambda },
	\end{equation}
	then 
	\begin{equation}
		\label{eq:M_log_g1}
		\frac{M(\lambda \sigma)}{\log \lambda}
		\geq 
		\frac{M(\lambda b)}{\log \lambda}
		=
		g(\lambda)
	\end{equation}
	for all $\sigma \in [b,s]$ and $\lambda > 1$.
	By assumption,
	we have $\log \lambda = o (M(\lambda b))$ as $\lambda \to \infty$. Hence, by the definition \eqref{eq:g_def_Minf}
	of $g$,
	there exists $\lambda_2 >\lambda_1$ such that 
	\begin{equation}
		\label{eq:M_log_g2}
		g(\lambda ) \geq M_{\inf}(s)+ 1
	\end{equation}
	for all $\lambda \geq \lambda_2$.
	By \eqref{eq:M_log_g1} and \eqref{eq:M_log_g2}, 
	\begin{equation}
		\label{eq:inf_lambda2}
		\inf_{\lambda \geq \lambda_2}
		\frac{M(\lambda \sigma)}{\log \lambda} \geq 
		M_{\inf}(s) +1\quad 
		\text{for all $\sigma \in [b,s]$.}
	\end{equation}
	Since $M_{\inf}(\sigma) \leq M_{\inf}(s)$
	for all $\sigma \in [b,s]$ by statement a),
	the estimates \eqref{eq:inf_lambda1}
	and 
	\eqref{eq:inf_lambda2} yield
	\eqref{eq:Minf_min}
	for all $\sigma \in [b,s]$.

	Let $\varepsilon >0$ be arbitrary.
	Since $M$ is uniformly continuous on $[\lambda_1b,\lambda_2s]$ by assumption,
	there exists $\delta>0$ such that 
	for all $t,\tau \in [\lambda_1b,\lambda_2s]$ 
	satisfying $|t-\tau| <  \delta$,
	\begin{equation}
		\label{eq:M_uni_cont}
		|M(t) - M(\tau)| \leq \varepsilon
		\log \lambda_1.
	\end{equation}
	Let $\sigma \in [b,s]$ satisfy $s-\sigma < \delta / \lambda_2$. Then \eqref{eq:M_uni_cont} yields
	\begin{equation}
		\label{eq:uniform_cont}
		\frac{M(\lambda s) - M(\lambda \sigma)}{\log \lambda}
		\leq \varepsilon \frac{\log \lambda_1}{\log \lambda}
		\leq \varepsilon
	\end{equation}
	for all $\lambda \in [\lambda_1,\lambda_2]$.
	By \eqref{eq:Minf_min},
	there exists $\lambda_{\sigma}
	\in [\lambda_1,\lambda_2]$ such that 
	\begin{equation}
		\label{eq:Minf_point}
		M_{\inf}(\sigma) = 
		\frac{M(\lambda_{\sigma} \sigma)}{\log \lambda_{\sigma}}.
	\end{equation}
	From \eqref{eq:uniform_cont} and \eqref{eq:Minf_point},
	it follows that
	\[
	M_{\inf}(s)
	= \inf_{\lambda > 1} \frac{M(\lambda s)}{\log \lambda}
	\leq 
	\frac{M(\lambda_{\sigma} s)}{\log \lambda_{\sigma}}
	\leq 
	\frac{M(\lambda_{\sigma} \sigma)}{\log \lambda_{\sigma}} + \varepsilon  =
	M_{\inf}(\sigma) + \varepsilon.
	\]
	Since $M_{\inf}$ is non-decreasing by a),
	we have
	$
	|M_{\inf}(s) - M_{\inf}(\sigma)| \leq \varepsilon.
	$
	Thus, $M_{\inf}$ is left-continuous.
\end{proof}

\subsection{Estimate for $\|AT(t)\|$}
\label{sec:estimate_AT_qm}

The argument to derive the lower bound for $\|AT(t)\|$ in statement b)
of Theorem~\ref{thm:mul_lower_bound}
is similar to that in Proposition~\ref{prop:necessity}.
To obtain a sharper estimate,
the following lemma 
will be used.

\begin{lemma}
	\label{lem:inf_dist_point}
	Let $A$ be a closed linear operator on a Banach space $X$ such that $\sigma(A)$ is non-empty and 
	$\sigma(A) \cap i \mathbb{R} \subset (-ib, ib)$ 
	for some $b \geq 0$.
	If
	\begin{equation}
		\label{eq:inf_dist_lemma}
		\inf_{|\eta| \geq s} \dist(i \eta , \sigma(A)) \to \infty
		\qquad \text{as $s \to \infty$},
	\end{equation}
	then
	for all $s \geq b$,
	there exist $\eta_0 \in \mathbb{R}$ with $|\eta_0| \geq s$
	and $z_0 \in \sigma(A)$ such that 
	\begin{equation}
		\label{eq:inf_dist_bound}
		|i \eta_0  - z_0| = \inf_{|\eta| \geq s}
		\dist(i \eta, \sigma (A)).
	\end{equation}
\end{lemma}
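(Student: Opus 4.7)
The plan is to prove attainment of the infimum by a direct compactness argument applied to a minimizing sequence. Set $D(s) \coloneqq \inf_{|\eta| \geq s} \dist(i \eta, \sigma(A))$. First I would note that $D(s)$ is finite: since $\sigma(A)$ is non-empty, pick any $z_* \in \sigma(A)$, and for every $\eta$ with $|\eta| \geq s$ we have $\dist(i\eta, \sigma(A)) \leq |i\eta - z_*| < \infty$. Then by definition of the infimum there exist sequences $(\eta_n)_{n \in \mathbb{N}} \subset \mathbb{R}$ with $|\eta_n| \geq s$ and $(z_n)_{n \in \mathbb{N}} \subset \sigma(A)$ such that $|i\eta_n - z_n| \to D(s)$ as $n \to \infty$.

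The next step is to establish boundedness of both sequences, which is the crux of the argument and the only place where hypothesis \eqref{eq:inf_dist_lemma} enters. Suppose for contradiction that $(\eta_n)$ is unbounded; after passing to a subsequence, $|\eta_n| \to \infty$. Then \eqref{eq:inf_dist_lemma} gives $\dist(i\eta_n, \sigma(A)) \to \infty$. But $|i\eta_n - z_n| \geq \dist(i\eta_n, \sigma(A))$, so $|i\eta_n - z_n| \to \infty$, contradicting $|i\eta_n - z_n| \to D(s) < \infty$. Hence $(\eta_n)$ is bounded, and then the triangle inequality $|z_n| \leq |\eta_n| + |i\eta_n - z_n|$ together with convergence of $|i\eta_n - z_n|$ shows that $(z_n)$ is bounded as well.

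Finally, Bolzano--Weierstrass supplies a subsequence, which I relabel as the original one, such that $\eta_n \to \eta_0$ in $\mathbb{R}$ and $z_n \to z_0$ in $\mathbb{C}$. Since $|\eta_n| \geq s$ for every $n$, the limit satisfies $|\eta_0| \geq s$, and since $\sigma(A)$ is closed we have $z_0 \in \sigma(A)$. Continuity of the norm yields $|i\eta_0 - z_0| = \lim_{n \to \infty} |i\eta_n - z_n| = D(s)$, which is the desired identity \eqref{eq:inf_dist_bound}.

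The only potentially subtle step is the boundedness of $(\eta_n)$; once \eqref{eq:inf_dist_lemma} is invoked there it forces $|\eta_n|$ to stay in a compact range, and everything else follows from standard compactness and the closedness of $\sigma(A)$. No further properties of $A$ beyond closedness and non-emptiness of $\sigma(A)$ are needed.
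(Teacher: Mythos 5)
Your proof is correct and rests on the same underlying idea as the paper's: the hypothesis \eqref{eq:inf_dist_lemma} is used exactly to keep the minimizing $\eta$'s in a bounded set, after which compactness (plus closedness of $\sigma(A)$) yields attainment. The paper phrases this as a two-step reduction to compact sets (first restricting $\eta$ to a compact annulus, then intersecting $\sigma(A)$ with a closed ball), whereas you run a minimizing sequence through Bolzano--Weierstrass; the arguments are essentially equivalent.
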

\begin{proof}
	First, we show that 
	for all $s \geq b$, there exists $s_0 \geq s$ such that 
	\begin{equation}
		\label{eq:inf_dist_bound1}
		\inf_{|\eta| \geq s}
		\dist(i \eta, \sigma (A)) =
		\inf_{s \leq |\eta| \leq s_0}
		\dist(i \eta, \sigma (A)).
	\end{equation}
	Assume for a contradiction that 
	there exists $s_1 \geq b$ such that for all $s \geq s_1$,
	\[
	\inf_{|\eta| \geq s_1}
	\dist(i \eta, \sigma (A)) <
	\inf_{s_1 \leq |\eta| \leq s}
	\dist(i \eta, \sigma (A)).
	\]
	Then, for all $n \in \mathbb{N}$ with $n \geq s_1$,
	there exists $\eta_n \in \mathbb{R}$ with $|\eta_n| \geq s_1$
	such that 
	\begin{equation}
		\label{eq:inf_dist_contradiction}
		\dist (i \eta_n, \sigma(A)) < \inf_{s_1 \leq |\eta| \leq n}
		\dist(i \eta, \sigma (A)).
	\end{equation}
	This also implies that $|\eta_n| > n$.
	By  assumption~\eqref{eq:inf_dist_lemma},
	$\dist (i \eta_n, \sigma(A)) \to \infty$ as $n \to \infty$.
	On the other hand, for all $n \in \mathbb{N}$ satisfying 
	$n \geq s_1$,
	\[
	\inf_{s_1 \leq |\eta| \leq n}
	\dist(i \eta, \sigma (A)) \leq 
	\dist(i s_1, \sigma (A)) < \infty.
	\]
	These contradict the inequality \eqref{eq:inf_dist_contradiction}.
	
	Let $s_0 \geq s \geq  b$.
	There exists $R>0$ such that
	\begin{equation}
		\label{eq:inf_dist_bound2}
		\inf_{s \leq |\eta| \leq s_0}\dist(i \eta, \sigma (A)) = 
		\inf_{s \leq |\eta| \leq s_0}\dist(i \eta, \sigma (A) \cap \overline{\mathbb{D}}_R),
	\end{equation}
	where $\overline{\mathbb{D}}_R$ denotes the closed ball
	with radius $R$ centered at the origin.
	Since the sets $\{\eta \in \mathbb{R}: s \leq |\eta| \leq s_0\}$
	and $\sigma (A)\cap \overline{\mathbb{D}}_R$ are 
	compact, there exist $\eta_0 \in \mathbb{R}$ with $s \leq |\eta_0| \leq s_0$
	and $z_0 \in \sigma (A)\cap \overline{\mathbb{D}}_R$ such that 
	\[
	|i \eta_0  - z_0| = \inf_{s \leq |\eta| \leq s_0}\dist(i \eta, \sigma (A) \cap \overline{\mathbb{D}}_R).
	\]
	Combining this with \eqref{eq:inf_dist_bound1}
	and \eqref{eq:inf_dist_bound2}, we conclude that 
	\eqref{eq:inf_dist_bound} holds.
\end{proof}

Let $A$ be the generator of an immediately differentiable
$C_0$-semigroup $(T(t))_{t \geq 0}$ 
on a Banach space.
For $\varepsilon \in (0,1)$, we define $K_{\varepsilon}\colon
(0,\infty) \to \mathbb{R}_+$ by
\begin{equation}
	\label{eq:K_eps_def}
	K_{\varepsilon}(t) \coloneqq 
	\frac{\|AT(1/t)\| }{1 - \varepsilon}.
\end{equation}
Next, we present several properties of $K_{\varepsilon}$,
which are crucial for 
establishing 
the lower estimate in statement b) of 
Theorem~\ref{thm:mul_lower_bound}.
Note that under the assumptions of Theorem~\ref{thm:mul_lower_bound},
the function $M$
defined by \eqref{eq:M_def_multiplication}
satisfies $M(s) \to \infty$ as $s \to \infty$.
Indeed, it follows from \eqref{eq:multi_R} that
\[
\frac{1}{M(s)} = \sup_{|\eta| \geq s} \|R(i \eta,A)\| =
\sup_{|\eta| \geq s} \frac{1}{\dist (i \eta , \sigma (A))}
\]
for all $s \geq b$, and hence 
$M(s) \to \infty$ as $s \to \infty$ by
the arguments
in Remark~\ref{rem:M_s_infty_prop}.

\begin{lemma}
	\label{lem:K_prop}
	Let $A$ be the generator of an immediately differentiable
	$C_0$-semigroup $(T(t))_{t \geq 0}$ of quasi-multiplication type
	on a Banach space $X$.
	Let $b \geq 0$ satisfy 
	$\sigma(A) \cap i \mathbb{R} \subset 
	(-ib,ib)$, and 
	define $M \colon [b,\infty) \to (0,\infty)$ by
	\eqref{eq:M_def_multiplication}.
	If $M(s) = o(s)$  as $s \to \infty$,
	then the function $K_{\varepsilon}$ defined 
	by \eqref{eq:K_eps_def} satisfies 
	the following properties for all $\varepsilon \in (0,1)$:
	\begin{enumerate}
		\renewcommand{\labelenumi}{\rm{\alph{enumi})}}
		\item 
		There exists a constant $s_0 \geq b$ such that 
		\begin{equation}
			\label{eq:se_M_K_bounded}
			s e^{- M(s)/t} \leq K_{\varepsilon}(t)
		\end{equation}
		for all $s \geq s_0$ and $t >0$. In particular,
		$K_{\varepsilon}(t) >0$ for all $t >0$.
		\item
		$K_{\varepsilon}(t) \to \infty$ as $t \to \infty$, and 
		there exists a constant 
		$t_0 >0$ such that 
		$K_{\varepsilon}$ is non-decreasing and left-continuous on 
		the interval $[t_0,\infty)$.
	\end{enumerate}
\end{lemma}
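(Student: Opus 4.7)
The plan for a) is to realize $M(s)$ as an attained distance. Since $(T(t))_{t \geq 0}$ is immediately differentiable, Theorem~\ref{thm:spectral_prop} (applied with any fixed $q>0$) implies that $\dist(i\eta, \sigma(A)) \to \infty$ as $|\eta| \to \infty$, and in particular $M(s) \to \infty$. Thus Lemma~\ref{lem:inf_dist_point} applies, and for each $s \geq b$ there exist $\eta_0 \in \mathbb{R}$ with $|\eta_0| \geq s$ and $z_0 = u_0 + iv_0 \in \sigma(A)$ such that $|i\eta_0 - z_0| = M(s)$. The identity $u_0^2 + (v_0 - \eta_0)^2 = M(s)^2$ then yields $\re z_0 \geq -M(s)$ and $|z_0| \geq |\eta_0| - M(s) \geq s - M(s)$. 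Because $M(s) = o(s)$, I can pick $s_0 \geq b$ so that $s - M(s) \geq (1-\varepsilon)s$ for every $s \geq s_0$. Plugging $z_0$ into the quasi-multiplication identity \eqref{eq:multi_AT} gives
\[
\|AT(1/t)\| \geq |z_0|\, e^{\re z_0/t} \geq (1-\varepsilon)\, s\, e^{-M(s)/t},
\]
and dividing by $1-\varepsilon$ yields \eqref{eq:se_M_K_bounded}. Positivity of $K_\varepsilon(t)$ follows by choosing any single $s \geq s_0$.

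For b), the bound in a) immediately gives $\liminf_{t \to \infty} K_\varepsilon(t) \geq s$ for every $s \geq s_0$, so letting $s \to \infty$ shows $K_\varepsilon(t) \to \infty$. To establish monotonicity and left-continuity on a tail, set $F(\tau) = \|AT(\tau)\|$ and split the quasi-multiplication supremum as $F(\tau) = \max(F_-(\tau), F_+(\tau))$, where
\[
F_-(\tau) = \sup_{z \in \sigma(A),\, \re z \leq 0}|z|e^{\tau \re z},\qquad F_+(\tau) = \sup_{z \in \sigma(A),\, \re z > 0}|z|e^{\tau \re z}.
\]
Applying Theorem~\ref{thm:spectral_prop} once more (say with $q = 1$) confines $\sigma(A) \cap \{\re z \geq 0\}$ to the compact rectangle $\{0 \leq \re z \leq \omega_0,\ |\im z| \leq p\}$, so $F_+(\tau) \leq R\, e^{\tau \omega_0}$ for some $R > 0$, hence $F_+$ is bounded on $(0,1]$. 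Each map $\tau \mapsto |z|e^{\tau \re z}$ with $\re z \leq 0$ is non-increasing, so $F_-$ is non-increasing as a supremum of non-increasing functions.

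Feeding $\tau = 1/t$ into a) gives $F(\tau) \geq (1-\varepsilon) s\, e^{-M(s)\tau}$ for every $s \geq s_0$, so sending $\tau \downarrow 0$ and then $s \to \infty$ forces $F(\tau) \to \infty$. Since $F_+$ stays bounded on $(0,1]$, there is $\tau_0 \in (0,1]$ with $F_-(\tau) > F_+(\tau)$ for $\tau \in (0, \tau_0]$, so $F = F_-$ on this interval. With $t_0 \coloneqq 1/\tau_0$, this shows that $K_\varepsilon(t) = F_-(1/t)/(1-\varepsilon)$ is non-decreasing on $[t_0, \infty)$. Finally, $F_-$ is a supremum of continuous functions of $\tau$, hence lower semicontinuous; combined with being non-increasing, a standard squeeze forces $F_-$ to be right-continuous, so $K_\varepsilon$ is left-continuous on $[t_0, \infty)$.

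The main subtlety is the splitting in b): one needs compactness of $\sigma(A) \cap \{\re z \geq 0\}$, supplied by Theorem~\ref{thm:spectral_prop}, so that $F_+$ is bounded, together with the lower bound from a) so that $F_-$ blows up at the origin and the non-increasing function $F_-$ genuinely dominates $F_+$ on a right-neighborhood of $0$.
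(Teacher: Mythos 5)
Your proof is correct and follows essentially the same route as the paper: part a) extracts a spectral point via Lemma~\ref{lem:inf_dist_point} with $|\re z_0|\leq M(s)$ and $|z_0|\geq(1-\varepsilon)s$ exactly as in the paper's \eqref{eq:alp_beta_bound_mult}, and part b) uses the same splitting of the supremum over $\sigma(A)\cap\{\re z\leq 0\}$ versus the compact set $\sigma(A)\cap\overline{\mathbb{C}}_+$ supplied by Theorem~\ref{thm:spectral_prop}. The only cosmetic difference is that you obtain left-continuity of $K_\varepsilon$ from lower semicontinuity of a supremum of continuous functions plus monotonicity, whereas the paper runs the equivalent $\varepsilon$-argument with a near-maximizing $z_1$ by hand.
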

\begin{proof}
	Let $0 < \varepsilon < 1$.
	Since $M(s) = o(s)$  as 
	$s \to \infty$,
	there exists $s_1 \geq b$ such that 
	\begin{equation}
		\label{eq:wN_bound}
		M(s) \leq \varepsilon s
	\end{equation}
	for all $s \geq s_1$.
	
	a)
	By Theorem~\ref{thm:spectral_prop},
	there exists $s_2 \geq b$ such that 
	\[
	\sup\{
	\re z :
	z \in \sigma(A) \text{~and~} |z| \geq (1-\varepsilon)s_2
	\} < 0.
	\]
	Define $s_0 \coloneqq \max \{s_1,\,s_2 \}$.
	Since 
	\[
	\inf_{|\eta| \geq s} \dist(i \eta , \sigma(A)) 
	= M(s) \to \infty
	\qquad \text{as $s \to \infty$},
	\]
	Lemma~\ref{lem:inf_dist_point}  and the argument
	used to obtain \eqref{eq:alp_beta_bound}
	show that
	for all $s \geq s_0$,
	there exist $u_s,v_s \in \mathbb{R}$
	with $u_s +iv_s  \in \sigma(A)$ such that
	\begin{subequations}
		\label{eq:alp_beta_bound_mult}
		\begin{empheq}[left = {\empheqlbrace \,}, right = {}]{align}
			|u_s | &\leq M(s), \label{eq:alp_beta_bound_mult1}\\
			(1-\varepsilon)s &\leq |u_s  + iv_s | , \quad \text{and}
			\label{eq:alp_beta_bound_mult2} \\
			u_s &< 0 \label{eq:alp_beta_bound_mult3}.
		\end{empheq}
	\end{subequations}
	
	Let $s \geq s_0$ and $t>0$.
	By \eqref{eq:alp_beta_bound_mult1}--\eqref{eq:alp_beta_bound_mult3},
	\begin{equation}
		\label{eq:1-eps_se}
		(1-\varepsilon) se^{-M(s)/t} \leq 
		|u_s+iv_s| se^{u_s/t}.
	\end{equation}
	Using \eqref{eq:multi_AT},
	we also have
	\begin{equation}
		\label{eq:usvs_e}
		|u_s + i v_s| e^{u_s/t} \leq 
		\sup_{z \in \sigma(A)} |z| e^{(\re z)/t} = 
		\left\|AT\left( \frac{1}{t} \right) \right\|
		=(1-\varepsilon) K_{\varepsilon}(t).
	\end{equation}
	The estimate \eqref{eq:se_M_K_bounded}
	follows from \eqref{eq:1-eps_se} and \eqref{eq:usvs_e}.
	
	b)
	By \eqref{eq:se_M_K_bounded} and \eqref{eq:wN_bound},
	\[
	\sup_{s \geq s_0} s e^{-\varepsilon s/t} \leq K_{\varepsilon}(t)
	\]
	for all $t >0$.
	Hence, $K_{\varepsilon}(t) \to \infty$ as $t \to \infty$.

	Since $\sigma(A) \cap \overline{\mathbb{C}}_+$ is compact
	by Theorem~\ref{thm:spectral_prop}, 
	there exists $t_0 \geq 1$ such that for all $t \geq t_0$,
	\[
	\sup_{z \in \sigma(A) \cap \overline{\mathbb{C}}_+} |z| e^{(\re z)/t}
	\leq  \sup_{z \in \sigma(A) \cap \overline{\mathbb{C}}_+} |z| e^{\re z}
	< (1-\varepsilon) K_{\varepsilon}(t).
	\]
	Therefore, by \eqref{eq:usvs_e},
	\begin{equation}
		\label{eq:K_eps_left_p}
		(1-\varepsilon)K_{\varepsilon}(t) = \sup_{z \in \sigma(A) \cap \mathbb{C}_-} |z| e^{(\re z)/t}
	\end{equation}
	for all $t \geq t_0$.
	This implies that $K_{\varepsilon}$
	is non-decreasing on $[t_0,\infty)$.
	
	It remains to show that $K_{\varepsilon}$ is 
	left-continuous on $[t_0,\infty)$.
	We
	let $t > t_0$ and $\delta >0$. By
	\eqref{eq:K_eps_left_p},
	there exists $z_1 \in \sigma(A) \cap \mathbb{C}_-$ such that 
	\begin{equation}
		\label{eq:K_eps_z_1_upper}
		K_{\varepsilon}(t) \leq \frac{|z_1| e^{(\re z_1)/t}}{1-\varepsilon} + \frac{\delta}{2}.
	\end{equation}
	Moreover, there exists $h_0 \in (0,t-t_0]$ such that 
	for all $h \in (0,h_0)$,
	\begin{equation}
		\label{eq:cont_prop_z1}
		\frac{|z_1| e^{(\re z_1)/t}}{1-\varepsilon}  - 
		\frac{|z_1| e^{(\re z_1)/(t-h)}}{1-\varepsilon} \leq 
		\frac{\delta}{2}.
	\end{equation}
	Since $K_{\varepsilon}$
	is non-decreasing on $[t_0,\infty)$,
	it follows that 
	\begin{equation}
		\label{eq:K_th_Kt}
		K_{\varepsilon}(t-h) \leq K_{\varepsilon}(t)
	\end{equation}
	for all $h \in (0,h_0)$.
	Combining \eqref{eq:K_eps_z_1_upper}--\eqref{eq:K_th_Kt}, 
	we derive
	\begin{align*}
		\frac{|z_1| e^{(\re z_1)/t}}{1-\varepsilon} - \frac{\delta}{2} 
		& \leq 
		\frac{|z_1| e^{(\re z_1)/(t-h)}}{1-\varepsilon}\\ &\leq 
		K_{\varepsilon}(t-h)  \leq K_{\varepsilon}(t) 
		\leq 
		\frac{|z_1| e^{(\re z_1)/t}}{1-\varepsilon} + 
		\frac{\delta}{2}
	\end{align*}
	for all $h \in (0,h_0)$.
	Hence,
	$
	|K_{\varepsilon}(t) - K_{\varepsilon}(t-h)| \leq \delta
	$
	for all $h \in (0,h_0)$.
	Since $\delta >0$ is arbitrary, 
	$K_\varepsilon$ is left-continuous on $[t_0,\infty)$.
\end{proof}

For the proof of the upper estimate in statement b)
of Theorem~\ref{thm:mul_lower_bound},
we introduce right-inverses.
Given a constant 
$b \geq 0$ and a non-decreasing function
$M \colon [b,\infty) \to \mathbb{R}_+$ such that $M(s) \to \infty$ as $s \to \infty$,
the {\em right-inverse} $M_r^{-1}\colon [M(b),\infty) \to [b,\infty)$
of $M$ is defined by
\[
M_r^{-1}(t) \coloneqq \sup\{
s \geq b \colon M(s) \leq t
\}.
\]
Right-inverses have properties similar to 
those presented for left-inverses in Section~\ref{sec:left_inv}.
Indeed, $M_r^{-1}$ is non-decreasing and right-continuous.
Moreover, $M_r^{-1}(t) \to \infty$ as $t \to \infty$, and 
\begin{equation}
	\label{eq:ri_prop1}
	M_r^{-1}(M(s)) \geq s
\end{equation}
for all $s \geq b$. 
If $M$ is continuous, then
\begin{equation}
	\label{eq:ri_prop2}
	M(M_r^{-1}(t)) = t
\end{equation}
for all $t \geq M(b)$.
A property analogous to that established for left-inverses
in Lemma~\ref{lem:left_inv_small_o} also holds for right-inverses. Its proof is the same as that of Lemma~\ref{lem:left_inv_small_o} and hence is omitted.
\begin{lemma}
	\label{lem:right_inv_small_o}
	Let $b\geq 0$ and
	let $M\colon [b,\infty) \to (0,\infty)$ be a non-decreasing  continuous
	function such that $M(s) \to \infty$ as $s \to \infty$.
	If $M(s) = o(s)$ as $s \to \infty$,
	then
	$t = o(M_r^{-1}(t))$
	as $t \to \infty$.
\end{lemma}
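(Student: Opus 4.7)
My plan is to follow the same template as the proof of Lemma~\ref{lem:left_inv_small_o}, adapted to right-inverses, where the argument is in fact slightly cleaner. First I would fix an arbitrary $\varepsilon > 0$ and exploit the hypothesis $M(s) = o(s)$ as $s \to \infty$ to produce a threshold $s_1 \ge b$ such that $M(s) \le \varepsilon s$ for all $s \ge s_1$.

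Next, I would use the continuity of $M$ together with $M(s) \to \infty$ as $s \to \infty$ to guarantee that, for every sufficiently large $t$, there is an $s_t \ge s_1$ with $M(s_t) = t$. By the definition $M_r^{-1}(t) = \sup\{s \ge b : M(s) \le t\}$, this $s_t$ lies in the set whose supremum defines $M_r^{-1}(t)$, so $s_t \le M_r^{-1}(t)$. Combining the two inequalities yields, for all sufficiently large $t$,
\[
t \;=\; M(s_t) \;\le\; \varepsilon\, s_t \;\le\; \varepsilon\, M_r^{-1}(t).
\]
Since $\varepsilon > 0$ is arbitrary, this establishes $t = o(M_r^{-1}(t))$ as $t \to \infty$.

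I do not anticipate a substantive obstacle. The right-inverse version actually sidesteps the additive perturbation $t = M(s_t) + \varepsilon$ that appears in the proof of Lemma~\ref{lem:left_inv_small_o}: there, the analogous inequality $s_t \le M^{-1}(t)$ is unavailable directly because the left-inverse is defined as an infimum and only satisfies $M^{-1}(M(s)) \le s$, which forces the perturbation trick. Here the supremum in the definition of $M_r^{-1}$ gives the bound for free. The only book-keeping detail is to ensure that the choice of $t$ is large enough so that $s_t \ge s_1$ is possible; this follows from $M_r^{-1}(t) \to \infty$ as $t \to \infty$, which is immediate from $M(s) \to \infty$ as $s \to \infty$.
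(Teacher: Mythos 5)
Your proof is correct and is exactly the adaptation the paper intends: it mirrors the proof of Lemma~\ref{lem:left_inv_small_o}, which the paper explicitly says carries over, and your observation that the supremum in the definition of $M_r^{-1}$ yields $s_t \le M_r^{-1}(t)$ directly (making the additive perturbation $t = M(s_t)+\varepsilon$ unnecessary) is accurate. No gaps.
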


We are now ready to prove Theorem~\ref{thm:mul_lower_bound}.
\begin{proof}{Proof of Theorem~\ref{thm:mul_lower_bound}}
	Let $0 < \varepsilon < 1$ and define $K_{\varepsilon}$
	by \eqref{eq:K_eps_def}.
	We immediately see that the function
	$M$ defined by \eqref{eq:M_def_multiplication} is 
	non-decreasing and continuous.
	Moreover, 
	$M(s) \to \infty$ as $s \to \infty$;
	see the paragraph preceding Lemma~\ref{lem:K_prop}.
	These properties of $M$ will be used for 
	the right-inverse $M_r^{-1}$.
	
	a)
	By Lemma~\ref{lem:K_prop}.a), there exists $s_0 \geq b$ such that 
	\begin{equation}
		\label{eq:M_t_lower}
		\frac{M(s)}{t} \geq  \log \left( \frac{s}{K_{\varepsilon}(t)} \right)
	\end{equation}
	for all $s \geq s_0$ and $t>0$.
	By Lemma~\ref{lem:K_prop}.b), $K_{\varepsilon}(t) \to \infty$ 
	as $t \to \infty$, and
	there exists $t_1>0$ such that 
	$K_{\varepsilon}$ 
	is non-decreasing and left-continuous on the interval $[t_1,\infty)$. 
	We regard $K_{\varepsilon}$ as a function on $[t_1,\infty)$ and 
	consider the left-inverse $K_{\varepsilon}^{-1}\colon
	[K_{\varepsilon}(t_1),\infty) \to [t_1,\infty)$.
	Then we have
	$K_{\varepsilon}(K_{\varepsilon}^{-1}(s)) \leq s$ for all $s \geq  K_{\varepsilon}(t_1)$
	by the property \eqref{eq:li_prop3}	of left-inverses.
	Let $\lambda > 1$ and $s_1 \coloneqq 
	\max \{s_0,\,K_{\varepsilon}(t_1)\}$. 
	For all $s \geq s_1$,
	the estimate
	\eqref{eq:M_t_lower} with $t = 
	K_{\varepsilon}^{-1}(s)$ gives
	\[
	\frac{M(\lambda s)}{K_{\varepsilon}^{-1}(s)} \geq 
	\log \left( \frac{\lambda s}{K_{\varepsilon}(K_{\varepsilon}^{-1}(s))}  \right) 
	\geq
	\log \lambda.
	\]
	Since $\lambda > 1$ is arbitrary,
	it follows that 
	\begin{equation}
		\label{eq:Minf_Kinv}
		M_{\inf}(s) = \inf_{\lambda > 1}
		\frac{M(\lambda s)}{\log \lambda}
		\geq K_{\varepsilon}^{-1}(s)
	\end{equation}
	for all $s \geq s_1$.
	Combining this with $K_{\varepsilon}^{-1}(s) \to \infty$ as $s \to \infty$,
	we obtain
	$M_{\inf}(s) \to \infty$ as $s \to \infty$.
	Hence, Lemma~\ref{lem:M_inf_prop1}
	shows that statement~a) holds.
	
	b)
	The lower  estimate in statement b) follows immediately 
	from \eqref{eq:Minf_Kinv}.
	Indeed, \eqref{eq:Minf_Kinv} yields
	\[
	K_{\varepsilon}(t) = (K_{\varepsilon}^{-1})^{-1}(t) \geq M_{\inf}^{-1}(t)
	\]
	for all $t > M_{\inf}(s_1)$.
	Therefore,
	by the definition \eqref{eq:K_eps_def} of $K_{\varepsilon}$,
	\[
	\|AT(t)\| \geq (1-\varepsilon )M_{\inf}^{-1}\left(
	\frac{1}{t} \right)
	\]
	for all $t \in (0,1/M_{\inf}(s_1))$.
	
	Next, we prove the upper estimate in statement b).
	Take $z \in \sigma(A)$ such that $|\im z| \geq b$.
	We have
	\[
	M(|\im z|) =
	\inf_{|\eta| \geq |\im z|} \dist (i \eta, \sigma(A)) \leq |i\im z - z| = |\re z|.
	\]
	By \eqref{eq:ri_prop1}, the right-inverse $M_r^{-1}$ satisfies
	\begin{equation}
		\label{eq:imz_Mrinv_bound}
		|\im z| \leq M_r^{-1}(M(|\im z|)) \leq M_r^{-1}(|\re z|).
	\end{equation}
	Since 
	$M(s) = o(s)$ as $s \to \infty$ by assumption,
	Lemma~\ref{lem:right_inv_small_o} shows that
	there exists $\tau _1 \geq M(b)$ 
	such that $\tau  \leq \varepsilon M_r^{-1}(\tau )$
	for all $\tau  \geq \tau _1$.
	If $|\re z| \geq \tau _1$, then \eqref{eq:imz_Mrinv_bound} gives
	\begin{align}
		\label{eq:z_Mrinv_bound}
		|z| 
		\leq 
		(|\re z|^2 + M_r^{-1}(|\re z|)^2)^{1/2} 
		\leq 
		(\varepsilon^2+1)^{1/2}  M_r^{-1}(|\re z|).
	\end{align}
	
	Define the subsets $\Omega_0$, $\Omega_1$, and 
	$\Omega_2$ of $\sigma(A)$ by
	\begin{align*}
		\Omega_0  &\coloneqq \{z \in \sigma(A) :\re z \leq -\tau_1 
		\text{~and~} |\im z| \geq b \}, \\
		\Omega_1 &\coloneqq \{ z \in \sigma(A) : \re z > -\tau _1 \}, \\
		\Omega_2 &\coloneqq \{ z \in \sigma(A) : \re z \leq -\tau _1
		\text{~and~} |\im z| < b\}.
	\end{align*}
	Then $\sigma(A)=  \Omega_0 \cup \Omega_1 \cup \Omega_2$.
	By the estimate \eqref{eq:z_Mrinv_bound} and 
	the property \eqref{eq:ri_prop2}
	of right-inverses,
	\begin{align}
		\sup_{z \in \Omega_0} |z| e^{t \re z} &\leq 
		(\varepsilon^2+1)^{1/2}  \sup_{z \in \Omega_0} M_r^{-1}(|\re z|) e^{-t |\re z|} \notag \\
		&\leq 
		(\varepsilon^2+1)^{1/2} 
		\sup_{s \geq b} s e^{-t M(s)}. 
		\label{eq:Omega_bound}
	\end{align}
	Since $ \Omega_1$ 
	is bounded by Theorem~\ref{thm:spectral_prop},
	there exists  $C_1>0$ such that
	\begin{equation}
		\label{eq:Omega1_bound}
		\sup_{z \in \Omega_1} |z| e^{t \re z}
		\leq C_1
	\end{equation}
	for all $t \in (0,1]$.
	For all $z \in \Omega_2$,
	\begin{align*}
		|z| e^{t \re z} &\leq (|\re z|^2 + b^2)^{1/2} e^{t \re z} \\
		&\leq 
		C_2 |\re z|e^{t \re z},\quad  \text{where~}C_2 \coloneqq 
		\left(
		1 + \frac{b^2}{\tau _1^2}
		\right)^{1/2}.
	\end{align*}
	This implies that
	\begin{equation}
		\label{eq:Omega2_bound}
		\sup_{z \in  \Omega_2} |z| e^{t \re z}
		\leq 
		C_2 \sup_{s \geq 0}  se^{-t s}
		= \frac{C_2}{et}
	\end{equation}
	for all $t > 0$.
	Combining the estimates \eqref{eq:Omega_bound}--\eqref{eq:Omega2_bound},
	we derive
	\begin{equation}
		\label{eq:Omega_bound_max}
		\sup_{z \in  \sigma(A)} |z| e^{t \re z} \leq 
		\max
		\left\{
		(\varepsilon^2+1)^{1/2} 
		\sup_{s \geq b} s e^{-t M(s)},\,
		C_1,\,
		\frac{C_2}{et}
		\right\}
	\end{equation}
	for all $t \in (0,1]$.
	
	It is enough to show that 
	\begin{equation}
		\label{eq:sexp_Minf_inv}
		\sup_{s \geq b} s e^{-t M(s)} \leq M_{\inf}^{-1}\left(
		\frac{1}{t}
		\right)
	\end{equation}
	for all $t \in (0,1/M_{\inf}(b)]$. Indeed,
	since 
	\[
	M_{\inf} (s) =	\inf_{\lambda >1} \frac{M(\lambda s)}{\log \lambda} \leq \frac{M(es)}{\log e} = M(es)
	\]
	for all $s \geq b$, it follows that 
	$M_{\inf}(s) = o(s)$ as $s \to \infty$.
	Lemma~\ref{lem:left_inv_small_o} shows that
	$1/t = o(M_{\inf}^{-1}(1/t))$ as $t \downarrow 0$.
	Therefore, if \eqref{eq:sexp_Minf_inv} is true, 
	then by \eqref{eq:Omega_bound_max},  there exists $t_0 >0$ such that 
	\[
	\|AT(t)\| \leq (1+\varepsilon)M_{\inf}^{-1}\left(
	\frac{1}{t}
	\right)
	\]
	for all $t \in (0,t_0]$.
	
	Let $0 < t \leq 1/M_{\inf}(b)$, and define
	\[
	R \coloneqq M_{\inf}^{-1}\left(
	\frac{1}{t}
	\right).
	\]
	If $b \leq s  \leq R$, then
	\[
	s e^{-tM(s)} \leq s \leq R.
	\]
	Let $s > R$.
	Setting $\lambda = s/R > 1$, we obtain
	\begin{equation}
		\label{eq:Minf_Mlog_bound}
		M_{\inf}(R) = 
		\inf_{\lambda >1} \frac{M(\lambda R)}{\log \lambda} \leq \frac{M(s)}{\log (s/R)}.
	\end{equation}
	Since $M_{\inf}(R) = 1/t$ by the property \eqref{eq:li_prop2}
	of left-inverses, the estimate \eqref{eq:Minf_Mlog_bound} gives
	\[
	-tM(s) = -\frac{M(s)}{M_{\inf}(R)} \leq -\log
	\left( \frac{s}{R} \right)= \log \left( \frac{R}{s} \right).
	\]
	This implies that
	\[
	s e^{-tM(s)}  \leq s e^{\log(R/s)} = R.
	\]
	Thus, the desired estimate \eqref{eq:sexp_Minf_inv} holds 
	for all $t \in (0,1/M_{\inf}(b)]$.
\end{proof}

\printbibliography
\end{document}